\numberwithin{equation}{section}
\newtheorem{Theorem}{Theorem}[section]
\newtheorem{Lemma}[Theorem]{Lemma}
\newtheorem{Definition}[Theorem]{Definition}
\newtheorem{Corollary}[Theorem]{Corollary}
\newtheorem{Remark}[Theorem]{Remark}
\numberwithin{equation}{section}
 \def\p{\partial} 
\def \Vh0{\stackrel{\circ}{V}_h} \def\to{\rightarrow}
  \def\f{\frac}  
   \def\eps{\varepsilon}
\def\m{\mbox}
\def\p{\partial}
\newcommand{\lc}
{\mathrel{\raise2pt\hbox{${\mathop<\limits_{\raise1pt\hbox
{\mbox{$\sim$}}}}$}}}
\newcommand{\gc}
{\mathrel{\raise2pt\hbox{${\mathop>\limits_{\raise1pt\hbox{\mbox{$\sim$}}}}$}}}
\newcommand{\ec}
{\mathrel{\raise2pt\hbox{${\mathop=\limits_{\raise1pt\hbox{\mbox{$\sim$}}}}$}}}
\def\bb{\begin{equation}} \def\ee{\end{equation}}
\def\beqn{\begin{eqnarray}}  \def\eqn{\end{eqnarray}}
\def\beqnx{\begin{eqnarray*}} \def\eqnx{\end{eqnarray*}}
\def\bn{\begin{enumerate}} \def\en{\end{enumerate}}
\def\bd{\begin{description}} \def\ed{\end{description}}
\title[Sensitivity analysis for reconstructions in transmission problems]{Localized sensitivity analysis at high-curvature boundary points of reconstructing inclusions in transmission problems}
\author{Habib Ammari}
\address{Department of Mathematics, ETH Z\"urich, R\"amistrasse 101, CH-8092, Switzerland}
\email{habib.ammari@math.ethz.ch}
\author{Yat Tin Chow}
\address{Department of Mathematics, University of California, Riverside, USA}
\email{yattinc@ucr.edu}
\author{Hongyu Liu}
\address{Department of Mathematics, City University of Hong Kong, Kowloon, Hong Kong, China}
\email{hongyu.liuip@gmail.com, hongyliu@cityu.edu.hk}
\begin{document}
\maketitle

\begin{abstract}

In this paper, we are concerned with the recovery of the geometric shapes of inhomogeneous inclusions from the associated far field data in electrostatics and acoustic scattering. We present a local resolution analysis and show that the local shape around a boundary point with a high magnitude of mean curvature can be reconstructed more easily and stably. In proving this, we develop a novel mathematical scheme by analyzing the generalized polarisation tensors (GPTs) and the scattering coefficients (SCs) coming from the associated scattered fields, which in turn boils down to the analysis of the layer potential operators that sit inside the GPTs and SCs via microlocal analysis. In a delicate and subtle manner, we decompose the reconstruction process into several steps, where all but one steps depend on the global geometry, and one particular step depends on the mean curvature at the given boundary point. Then by a sensitivity analysis with respect to local perturbations of the curvature of the boundary surface, we establish the local resolution effects. Our study opens up a new field of mathematical analysis on wave super-resolution imaging. 

\medskip

\noindent{\bf Keywords:}~~electrostatics and wave scattering, inverse inclusion problems, mean curvature, localized sensitivity, super-resolution, layer potential operators, microlocal analysis 

\noindent{\bf 2010 Mathematics Subject Classification:}~~Primary: 35R30, 35A27, 31B10; Secondary: 35J25, 86A20, 58J60

\end{abstract}

\section{Introduction}

We are concerned with the recovery of inhomogeneous inclusions from measurement of the scattered fields in electrostatics and acoustic scattering. We are particularly interested in studying and analysing how the mean curvature of the shape of an inhomogeneity would affect propagation of information from the shape via the scattered field. Let $D$ signify the shape of an inhomogeneity. We show that information from points $x \in \partial D$ with high magnitude of mean curvature $|H(x)|$ propagate with a significantly larger magnitude.  This is reflected by the sensitivity analysis of the scattered field with respect to the change of the shape. Indeed, we can localize our analysis at those boundary points with high mean curvature. Such larger sensitivity of information allows one more easily to locate these points of high mean curvature and also in a stable manner to reconstruct the local shape around these points.

The study of the correspondence between the geometry of an inhomogeneous inclusion and its scattered field has attracted significant attention in the literature. It can also find important applications in practice including medical imaging and geophysical exploration \cite{book,CK}. From a physical intuition, ``pathological" geometries {\color{black} (i.e. shapes with sharp corners and singularities)} should help to improve the transmission of scattering information and hence enhance the imaging effect. {\color{black} As a vivid example, one may think that a sharp corner on an object looks more ``visible" than a round one}. In fact, one of the ``pathological" geometries that has been studied extensively in the literature is the corner/edge singularity, where the surface tangential vectors are discontinuous. In \cite{BPS}, it is shown that the corner singularity of an inhomogeneity always scatters a probing field nontrivially and in \cite{curv_Liu_2}, the authors further quantified the result by establishing a positive lower bound on the scattering energy. That means, a corner singularity of an inhomogeneous inclusion can always generate significant scattering, and this is consistent with the aforementioned physical intuition. From a geometric perspective, a corner singularity indicates that the ``extrinsic" curvature is infinite. Hence, it is natural to consider the scattering due to curvatures. In \cite{curv_Liu}, the scattering by curvatures is considered and it is shown that if there is a boundary point on a generic inclusion with a sufficiently high curvature, then it scatters every probing field nontrivially. {\color{black} It is pointed out that in \cite{curv_Liu}, the high-curvature boundary point possesses both high mean and Gaussian curvatures, which can be regarded as a mollified corner point.} We would also like to mention in passing the related study in the literature on the scattering from scatterers with ``pathological" geometries in various scenarios; see \cite{BL17,CDL,LPRX,LRX,Maj,CHL,DCL,LT,LX,LZ} and the references therein. In particular, in \cite{Maj} the recovery of the boundary curvature of a convex acoustic obstacle from the associated high-frequency scattered fields was established and in \cite{CHL} characterisations of the generalized polarisation tensors (GPTs) \cite{ammari2004reconstruction,book} of the scattered field due to corner singularities of an insulating cavity in electrostatics were derived. 

In this paper, we rigorously reinforce the aforementioned physical intuition from a reconstruction perspective by performing the sensitivity analysis of the reconstruction around the boundary point with high mean curvature. That means, we include the corner/edge singularity as an extreme case. We consider our study for the electrostatics and the wave scattering in the quasi-static regime, where the reconstructions are severely more ill-conditioned than the corresponding high-frequency reconstruction. Indeed, we know that the corresponding reconstructions are exponentially ill-posed {\color{black} in the sense that in the worst case the measurement error could be exponentially amplified in the reconstruction of the inclusion in terms of the Hausdorff distance (cf. \cite{curv_Liu_2,CR,Isakov,LPRX,LRX,LT})}.
{\color{black} The exponential-illposedness comes from the fact that the inverse problem is performed with data of only one single frequency/wavenumber, and is intimately related to the ill-posedness of a unique continuiation problem; (notice that the inverse problem with multiple fequency is not expoentially-illposed.)}
 {\color{black} On the other hand, we would like to mention some related studies on inverse scattering problems which established increasing stability estimates according to the increase of the frequency \cite{ILW,NUW}. }One of the major findings in our study can be roughly described as follows by taking the reconstruction in electrostatics for the discussion. The generalized polarised tensors (GPTs) of the scattered field are a natural and powerful shape descriptor of the underlying inclusion \cite{yu,book}. It is a fact that the high-frequency information of the shape of the inclusion, namely the fine details of shape, enters the higher order GPTs. Thus the boundary information around the high-curvature point enters the high-order GPTs. However, GPTs decay exponentially {\color{black} with respect to the Fourier mode/order $(L,M)$ as $|L|,|M|$ goes to infinity}; and hence, as the scattered field propagates away from the inclusion, the fine-detail information of the inclusion becomes less visible and will be contaminated by the noise. Nonetheless, if we have very large magnitude of high curvature information, these higher order GPTs, although they exponentially decay, will be pushed up to relatively high magnitudes, making them more apparent. Therefore after a further perturbation around such a point of high curvature, the fine details near it will be more apparent in the far field and stably reconstructable. Hence, it is viable to claim that one can produce super-resolution reconstruction of the inclusion around the high curvature point.
{\color{black}
This also aligns with the physical intuition and daily experience that scattered fields from sharp corner shine more brightly and propagate with a stronger magnitude to afar.} On the other hand, it is emphasized that in this work we are not suggesting a new reconstruction method.  In fact stable ways of reconstructing inclusions using the GPTs via the optimisation approach can be found in \cite{gpt,mono_1,mono_2,spectral}.
{\color{black} In the current article, Newton-type methods are considered for an illustrative purpose of our theoretical study. They are used to illustrate the local sensitivity of the scattered field at the high-curvature point.
Indeed, we illustrate our point with a brief framework of a preconditioned Gauss-Newton descent method, with a choice of a preconditioning suggested by our theoretical understanding of the local sensitivity with respect to the curvature term $|H(x)|^2$.}

{Although it is physically intuitive to expect that the local geometry of the shape of an inclusion should have an effect on the local resolution, it turns out that the corresponding derivation is highly technical. In fact, decoding the local geometric information of an inclusion from the corresponding scattered field is highly challenging. In this paper, we develop a novel mathematical scheme to understand this correspondence through analyzing the GPTs and scattering coefficients (SCs) coming from the associated scattered fields, which in turn boils down to the analysis of the operators that sit inside the GPTs and SCs via microlocal analysis. By doing so, we are able to decompose the reconstruction process into several steps in a delicate and subtle manner, where all but one steps depend only on the global geometry, and one particular step depends on the mean curvature of the surface at that point (c.f. Corollary \ref{cor:1}). Finally, we are able to see clearly how sensitivity with respect to local perturbations relates to local curvature information of the surface. {\color{black}
We would like to remark that most of the conclusions in our discussion hold for domains which are regular enough, in particular for $\mathcal{C}^{4,\alpha}$ domains. Our results provide a rigorous justification as to why among compact domains with sufficiently regular boundaries (i.e. $\mathcal{C}^{4,\alpha}$ domains) and with a certain lower curvature bound, those points at a domain with the highest
curvatures are more easily reconstructed. Our study has important applications in super-resolution wave imaging.
A similar analysis towards domains with singular geometry, e.g. Lipschitz or $C^{0,\alpha}$ domains, or non-compact domains, remains an open problem, and will be the subject of a forthcoming study. 
}

The rest of the paper is organised as follows. In Section 2, we consider the electrostatic transmission problem. We compute the semi-classical symbols of several related operators on the boundary of the inclusion including the Neumann-Poincar\'e operator and its variants, and perform a sensitivity analysis on the generalized polarization tensors and hence the scattering coefficients.  We establish an increased sensitivity at points with mean curvature of high magnitude.  Then we move on to the inverse wave scattering in the low-frequency regime governed by the Helmholtz system with a small wavenumber and observe a similar property in Section 3.

\section{Localized sensitivity analysis for reconstructions in electrostatics} \label{sec2}

In this section, we consider the reconstruction of an inhomogeneous inclusion in electrostatics. We first introduce the electrostatic transmission problem as well as the associated layer potential operators that are crucial in our subsequent analysis. We compute the semi-classical symbols of those operators when viewing them as pseudo-differential operators. Then we conduct the localised sensitivity analysis at the high-curvature point on the boundary of the inclusion.  

\subsection{Electrostatic transmission problem and layer-potential operators} 

We introduce the electrostatic transmission problem and the associated layer-potential operators. Consider an open connected domain $D$ with a $\mathcal{C}^{2,\alpha}$, $0<\alpha<1$, boundary $\partial D$ and a connected complement $\mathbb{R}^d\backslash\overline{D}$, $d\geq 2$. Physically, $D$ is the support of an inhomogeneous dielectric inclusion. Let $\gamma_c$ and $\gamma_m$ be two positive constants, signifying the electric permittivities. Consider a medium configuration as follows,  
\begin{equation}\label{eq:mc1}
\color{black}\gamma_{D}  = \gamma_c \chi(D) +  \gamma_m \chi(\mathbb{R}^d \backslash \overline{D}),
\end{equation}
where in what follows, $\chi$ stands for the characteristic function of a domain. 
Let $u_0$ be a given harmonic function that signifies a probing field for the inclusion $D$. The electrostatic transmission problem is given for a potential field $u\in H_{loc}^1(\mathbb{R}^d)$ as follows,
\beqn
    \begin{cases}
        \nabla \cdot ({\color{black}\gamma_{D}} \nabla u) = 0 &\text{ in }\; \mathbb{R}^d, \\[1.5mm]
         u - u_0 = {O}(|x|^{1-d}) &\text{ as }\; |x| \rightarrow \infty. 
    \end{cases}
    \label{transmission}
\eqn
{\color{black}The well-posedness of the electrostatic problem \eqref{transmission} can be conveniently found in \cite{McL}.}

We proceed to introduce the single-layer potential operator and the Neumann-Poincar\'e operator associated with \eqref{transmission}. They are crucial in solving \eqref{transmission} via the layer-potential theory, and moreover they provide critical ingredients in solving the inverse problem of reconstructing the inclusion $D$ from the associated scattered field $u-u_0$. 
{\color{black}
A general exposition of the potential theory may be found in \cite{kellog, folland, mcowen}, and of the Neumann-Poincar\'e operator in \cite{shapiro}.
}

Given a density function $\phi \in L^2(\partial D, d \sigma)$, the single-layer and double-layer potentials, $\mathcal{S}_{\partial D} [\phi]$ and $\mathcal{D}_{\partial D} [\phi]$, are respectively defined as follows,
\beqn
    \mathcal{S}_{\partial D} [\phi] (x) &:=& \int_{\partial D} G (x-y) \phi(y) d \sigma(y), \\
    \mathcal{D}_{\partial D} [\phi] (x) &:=& \int_{\partial D} \frac{\partial }{\partial \nu_y } G(x-y) \phi(y) d \sigma(y), 
\eqn
for $x \in \mathbb{R}^d$, where $G$ is the fundamental solution of the Laplacian in $\mathbb{R}^d$ :
\beqn
    G (x-y) =
    \begin{cases}
     \color{black}\f{1}{2\pi} \log |x-y| & \text{ if }\; d = 2 \, ,\\
     \f{1}{(2-d)\varpi_d} |x-y|^{2-d} & \text{ if }\; d > 2 \, ,
    \end{cases}
    \label{fundamental}
\eqn
with $\varpi_d$ denoting the surface area of the unit sphere in $\mathbb{R}^d$. The single-layer potential satisfies the following jump relation across $\partial D$:
\beqn
    \f{\p}{\p \nu} \left(  \mathcal{S}_{\partial D} [\phi] \right)^{\pm} = (\pm \f{1}{2} I + \mathcal{K}^*_{ \partial D} )[\phi]\,,
    \label{jump_condition}
\eqn
where the superscripts $\pm$ indicate the limits from outside and inside $D$ respectively, and
$\mathcal{K}^*_{\partial D}: L^2(\partial D, d \sigma) \rightarrow L^2(\partial D, d \sigma)$ is the Neumann-Poincar\'e operator defined by
\beqn
    \mathcal{K}^*_{\partial D} [\phi] (x) := \f{1}{\varpi_d} \int_{\partial D} \f{\langle  x-y,\nu (x) \rangle  }{|x-y|^d} \phi(y) d \sigma(y) \,,
    \label{operatorK}
\eqn
with $\nu(x)$ being
the outward normal at $x \in \partial D$. 
$\mathcal{K}^*_{\partial D}$ maps $L^2_0(\partial D)$ onto itself, where
\beqn
L_0^2(\partial D, d \sigma):=\{\phi\in L^2(\partial D, d \sigma); \int_{\partial D}\phi\ d\sigma=0\}. 
\eqn

The transmission problem (\ref{transmission}) can be rewritten as
\beqn
    \begin{cases}
        \Delta u = 0 &\text{ in }\; D \cup (\mathbb{R}^d\backslash \overline{D} ) \, ,\\[1.5mm]
        u^+ = u^- &\text{ on }\; \partial D \, ,\\[1.5mm]
       \color{black} \gamma_m \f{\p u^{+}}{\p \nu}  =  \gamma_c \f{\p u^{-}}{\p \nu} &\text{ on }\; \partial D \, ,\\[1.5mm]
        u - u_0 = O(|x|^{1-d}) &\text{ as }\; |x| \rightarrow \infty \, .
    \end{cases}
    \label{transmission2}
\eqn
With the help of the single-layer potential, one can rewrite the perturbation  $u - u_0$, which is due to the inclusion $D$, as
\beqn
    u - u_0 = \mathcal{S}_{\partial D} [\phi] \, ,
    \label{scattered}
\eqn
where $\phi \in L^2(\partial D, d \sigma)$ is an unknown density, and $\mathcal{S}_{\partial D} [\phi]$ signifies the refraction part of the potential in the presence of the inclusion. 
By virtue of the jump relation (\ref{jump_condition}), solving the above system (\ref{transmission2})
is equivalent to solving the following integral equation for the density function $\phi \in L^2(\partial D)$:
\beqn
    \frac{\partial u_0}{\partial \nu} =  \left( \f{\gamma_c+\gamma_m}{2(\gamma_c-\gamma_m)}I - \mathcal{K}_{\partial D}^* \right) [\phi] \, .
    \label{potential2}
\eqn
This gives
\begin{equation}\label{eq:a1}
    u - u_0 = \mathcal{S}_{\partial D} \circ (\lambda I - \mathcal{K}_{\partial D}^*)^{-1} \left [  \frac{\partial u_0}{\partial \nu}  \right ]  \, ,
\end{equation}
where 
\[
\lambda := \f{\gamma_c+\gamma_m}{2(\gamma_c-\gamma_m)}.
\]
The invertibility of the operator $( \f{\gamma_c+\gamma_m}{2(\gamma_c-\gamma_m)}I - \mathcal{K}_{\partial D}^*)$
from $L^2(\partial D, d \sigma)$ onto $L^2(\partial D, d \sigma )$ {\color{black} (and respectively from
$L_0^2(\partial D, d \sigma)$ onto $L_0^2(\partial D, d \sigma)$ is proved (cf. \cite{book,kellog}),)}
provided that $|\f{\gamma_c+\gamma_m}{2(\gamma_c-\gamma_m)}| > 1/2$ via the Fredholm alternative {\color{black} (and respectively $|\f{\gamma_c+\gamma_m}{2(\gamma_c-\gamma_m)}| \geq 1/2$)}, which holds when the constants $\gamma_c$ and $\gamma_m$ are positive {\color{black} (and respectively nonegative)}. 

From \eqref{eq:a1}, we see that in order to understand the quantitative behaviour of the scattered field, one needs to investigate the mapping properties of the Neumann-Poincar\'e operator. Since  $\partial D$ is $\mathcal{C}^{2,\alpha} $, the operator $\mathcal{K}_{\partial D}^*: L^2(\partial D, d \sigma) \rightarrow L^2(\partial D, d \sigma)$ is compact (cf. \cite{ACKLM}), and its spectrum is discrete and accumulates at zero. All the eigenvalues are real and bounded by $1/2$. {\color{black} Moreover, $1/2$ is always an eigenvalue and the dimension of its associated eigenspace is the number of the connected components of $\partial D$.}
In two dimensions, it is proved that if $\lambda_i\neq 1/2$ is an eigenvalue of $\mathcal{K}_{\partial D}^*$, then $-\lambda_i$ is an eigenvalue as well. This property is known as the twin spectrum property; see \cite{plasmon1}. Moreover, it can directly verified that the eigenvalues of $\mathcal{K}_{\partial D}^*$ are invariant with respect to rigid motions and scaling. 
They can be explicitly computed for ellipses and spheres. In fact, if $a$ and $b$ denote the semi-axis lengths of an ellipse then it can be shown that $\pm (\frac{a-b}{a+b})^i$ are the corresponding eigenvalues \cite{shapiro}. 
For the sphere, they are given by $1/(2(2i+1))$; see \cite{seo}.   Some other computations of Neumann-Poincar\'e eigenvalues in different scenarios can be found in \cite{AKL,curv_Liu_3,BZ,KLY,DLL1,DLL2,DLL3,LL}.
In three dimensions, in \cite{weyl2, weyl1}, it is derived that
\begin{equation}\label{eq:b1}
\lambda_j ( \mathcal{K}_{\partial D}^* ) \sim \left\{ \frac{3 \int\limits_{\partial D} H^2(x) d \sigma_x -  \int\limits_{\partial D} K(x) d \sigma_x }{128 \pi} \right\}^{\frac{1}{2}} j^{- \frac{1}{2}},
\end{equation}
where $\lambda_j$ denotes the $j$-th Neumann-Poincar\'e eigenvalue ({\color{black} ordered according to the descending order of the magnitudes of the absolute values of the eigenvalues}), and $H(x)$ and $K(x)$ are respectively the mean and Gaussian curvatures at the point $x\in\partial D$.  Therefore, one sees that the magnitude of $\lambda_j$ not only has a decay order, but also depends on a constant related to the curvature of the inclusion.

From \eqref{eq:b1}, it is natural to expect that the curvature of the boundary of the inclusion should also enter into the scattered field in an explicit way. In fact, in what follows, we shall establish such an explicit dependence locally at a boundary point with a high magnitude of mean curvature. It turns out that the corresponding derivation is technical and tricky. For that purpose, we need to introduce the so-called GPT in arbitrary dimension in the next subsection.

\subsection{Generalized polarization tensors in arbitrary dimensions}  \label{subsection:harmonic}
{\color{black} First, from the generating function of the Gegenbauer ultraspherical harmonic polynomials $C^{(\frac{d-2}{2})}_{k}$ (as generalizations of the Legendre polynomials when $d=3$)  \cite{stein}, one has 
\[
\frac{1}{(1 - 2 at + t^2)^{\frac{d-2}{2}}} = \sum_{k=0}^{\infty}  C^{(\frac{d-2}{2})}_{k} (a) t^k \, ,
\]
{\color{black} for $a \in [-1,1], t \in \mathbb{R}$,} where
\begin{equation*}
C^{(\frac{d-2}{2})}_{k}\left( \langle \, \omega_x\, , \, \omega_y\, \rangle \right) =  c_{d,k} \sum_{ |l_1| \leq l_2 \leq ... \leq l_{d-1}  = k} Y_{l_1,..., l_{d-1}} (\omega_x) \, \overline{ Y_{l_1,..., l_{d-1}} (\omega_y) },
\end{equation*}
with $ c_{d,k}$ being certain normalization constants. Here, $\omega_x, \omega_y \in\mathbb{S}^{d-1}$ and $ Y_{l_1,..., l_{d-1}} (\omega)$ with $|l_1| \leq l_2 \leq ... \leq l_{d-1} = k$ are the spherical harmonics which form an orthonormal basis in $L^2(\mathbb{S}^{d-1})$ and are given explicitly as \cite{harmonics}:
\begin{eqnarray*}
Y_{l_1,..., l_{d-1}} (\omega_x) = \frac{1}{\sqrt{2 \pi}} e^{i l_1 \theta_1} \prod_{j=2}^{d-1} \sqrt{ \frac{2 l_j + j - 1}{2} \frac{ ( l_j + l_{j-1} + j -2 )! }{ (l_j -1)!}} \sin^{\frac{2-j}{2}} (\theta_j) P^{-(l_{j-1} + \frac{j-2}{2})}_{l_j + \frac{j-2}{2}} \left(\cos(\theta_j)\right),
\end{eqnarray*}
where $P^{\mu}_{\lambda}$ are the associated Legendre functions. Using the above facts, one has for $|x| > |y|$ the following decomposition of the Newtonian potential via substituting $t = |y|/|x|$, $a = \langle \, \omega_x\, , \, \omega_y\, \rangle $ for $d \geq 3$ \cite{stein}:
\begin{equation}\label{eq:exp1}
\begin{split}
G(x-y) =  & c_d \sum_{k=0}^{\infty} \frac{|y|^k}{|x|^{k + d - 2 }} \, C^{(\frac{d-2}{2})}_{k}\left( \langle \, \omega_x\, , \, \omega_y\, \rangle \right)\\
 = &   \sum_{k=0}^{\infty} c_{d,k} \, \frac{|y|^k}{|x|^{k + d - 2 }}  \sum_{ |l_1| \leq l_2 \leq ... \leq l_{d-1}  = k} Y_{l_1,..., l_{d-1}} (\omega_x) \, \overline{ Y_{l_1,..., l_{d-1}} (\omega_y) },
\end{split}
\end{equation}
where $\omega_x:=x/|x|, \omega_y=y/|y|$ and {\color{black} $c_{d} = \frac{1}{(d-2) \varpi_d} , c_{d,k}  = \frac{d+2k-2}{(d-2)^2 \omega_d}$ (with $\varpi_d$ being surface area of unit sphere in $\mathbb{R}^d$l)} are some dimensionality constants (c.f. also \cite[Chapter 10.9]{Bateman}) }
Similar to the expansions given by generalized polarisation tensors in two and three dimensions \cite{gpt,book}, by virtue of \eqref{eq:a1} and \eqref{eq:exp1}, one can expand the scattered potential for all $|x| > \sup\{ |x| : x \in D \} $ as
\begin{equation}\label{eq:exp2}
\begin{split}
    (u - u_0) (x)   = & \, \sum_{k=0}^{\infty}  \sum_{ |l_1| \leq l_2 \leq ... \leq l_{d-1} =k }  \, c_{d,k} \,  |x|^{- k - d + 2 } \,  Y_{l_1,..., l_{d-1}} (\omega_x) \, \\
   & \times \int_{\partial D} |y|^k \overline{ Y_{l_1,..., l_{d-1}} (\omega_y) }   \,
\left\{ (\lambda I - \mathcal{K}_{\partial D}^*)^{-1} \left [  \frac{\partial u_0}{\partial \nu}  \right] \right\} (y) d \sigma(y). 
\end{split}
\end{equation}
{\color{black}
With the above expansion, which is similar to \cite{book}, we define the generalized polarisation tensor (GPT) by choosing the incident harmonic function $\color{black} u_0 (y)= |y|^{m_{d-1}}  Y_{m_1,..., m_{d-1}} (\omega_y) $ and then taking the coefficient with respect to the function $Y_{l_1,..., l_{d-1}} (\omega_x)$ in \eqref{eq:exp2}. }
That is, 
\begin{Definition}
The generalized polarisation tensors (GPTs) of dimension $d$ with a given $\lambda$ and a domain $D \subset \mathbb{R}^d$ with a $\mathcal{C}^{2,\alpha}$ boundary are defined as
\begin{equation}\label{eq:gpt1}
\begin{split}
& \mathcal{M}_{(l_1,..., l_{d-1}), (m_1,..., m_{d-1})} (\lambda, D)\\
:= & \int_{\partial D} |y|^{l_{d-1}} \overline{ Y_{l_1,..., l_{d-1}} (\omega_y) }   \,
\left\{ (\lambda I - \mathcal{K}_{\partial D}^*)^{-1} \left [  \partial_\nu {\color{black}\left(  |x|^{m_{d-1}} Y_{m_1,..., m_{d-1}} (\omega_x) \right)}  \right ] \right\} (y) d \sigma(y),
\end{split}
\end{equation}
where $|l_1| \leq l_2 \leq ... \leq l_{d-1}$  and $|m_1| \leq l_2 \leq ... \leq m_{d-1}$.
\end{Definition}
By writing $L = (l_1,...,l_{d-1})$ and $\color{black} I_{k} = \{ L : |l_1| \leq l_2 \leq ... \leq l_{d-1} = k \}$, we handily obtain the following lemma.
\begin{Lemma}
Consider a domain $D \subset \mathbb{R^d}$ of $\mathcal{C}^{2,\alpha}$ class.  The solution to \eqref{transmission} with $$u_0(x) = \sum_{k=0}^{\infty}  \sum_{ M \in I_k }   a_{M} \, {\color{black} |x|^{k} \, Y_{M} (\omega_x)} \,$$ and $|x| > \sup\{ |x| : x \in D \} $ is given by
\beqnx
  (u - u_0) (x) =  \sum_{k=0}^{\infty}  \sum_{L \in I_k} 
\sum_{n=0}^{\infty}  \sum_{ M \in I_n} 
 \, c_{d,k} \, a_{M} \,  |x|^{- k - d + 2 } \, Y_{L} (\omega_x)\, \mathcal{M}_{L, M} (\lambda, D) \,.
\eqnx
Hence, for $x \in R \cdot \mathbb{S}^{d-1}$ with $R > \sup\{ |x| : x \in D  \} $, {\color{black} if $u$ is the scattered field generated by the incidence field $u_0 = |x|^{n} Y_M(\omega_x)$,} we have
\begin{equation}\label{eq:gpt2}
 \mathcal{M}_{L,M} (\lambda, D )  =   \frac{1}{c_{d,k}}     |R|^{2k + d - 2 }  \int_{\mathbb{S}^{d-1}}  \overline{ Y_L(\omega_x) }  \bigg({\color{black}u -  |x|^{n} Y_M(\omega_x)} \bigg) (R \, \omega_x )  d \omega_{x} \,.
\end{equation}
\end{Lemma}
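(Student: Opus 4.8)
The plan is to obtain the lemma as a bookkeeping exercise resting on three facts already in hand: the linearity of the forward map recorded in \eqref{eq:a1}, the multipole expansion \eqref{eq:exp2} of the scattered field valid in the exterior of the smallest ball enclosing $D$, and the orthonormality of the spherical harmonics $\{Y_L\}$ on $\mathbb{S}^{d-1}$. First I would reduce to a single solid harmonic. By \eqref{eq:a1} the map $u_0\mapsto u-u_0=\mathcal{S}_{\partial D}\circ(\lambda I-\mathcal{K}^*_{\partial D})^{-1}[\partial_\nu u_0]$ is a composition of bounded linear operators, the inverse existing because $\lambda=\tfrac{\varepsilon_c+\varepsilon_m}{2(\varepsilon_c-\varepsilon_m)}$ satisfies $|\lambda|>1/2$ for positive $\varepsilon_c,\varepsilon_m$; hence it is linear and continuous, and since $u_0=\sum_n\sum_{M\in I_n}a_M\,r^n Y_M$ converges together with its normal trace on $\partial D$, the forward map commutes with this summation. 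It therefore suffices to treat the single probing field $u_0=r^nY_M$, $M\in I_n$, and afterwards superpose against the coefficients $a_M$.

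Next, for $u_0=r^nY_M$ I would note $\partial_\nu u_0=\partial_\nu\bigl(r^{m_{d-1}}Y_M\bigr)$ with $m_{d-1}=n$ and substitute it into \eqref{eq:exp2}. This is legitimate for $|x|>\rho:=\sup\{|x|:x\in D\}$, where the Gegenbauer expansion \eqref{eq:exp1} of the fundamental solution converges uniformly for $y\in\partial D$, so the integration over $\partial D$ may be carried out term by term. Using $l_{d-1}=k$ for $L\in I_k$, the expansion becomes
\[
(u-u_0)(x)=\sum_{k=0}^{\infty}\sum_{L\in I_k}c_{d,k}\,|x|^{-k-d+2}\,Y_L(\omega_x)\int_{\partial D}|y|^{\,l_{d-1}}\,\overline{Y_L(\omega_y)}\,\Bigl\{(\lambda I-\mathcal{K}^*_{\partial D})^{-1}\bigl[\partial_\nu\bigl(r^{m_{d-1}}Y_M\bigr)\bigr]\Bigr\}(y)\,d\sigma(y),
\]
and by Definition \eqref{eq:gpt1} the surface integral is precisely $\mathcal{M}_{L,M}(\lambda,D)$. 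Superposing over $M$ with the weights $a_M$ then yields the first displayed identity of the lemma.

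For the inversion formula I would fix $R>\rho$, evaluate the single-harmonic identity at $x=R\,\omega_x$, giving $(u-r^nY_M)(R\omega_x)=\sum_k\sum_{L\in I_k}c_{d,k}\,R^{-k-d+2}\,Y_L(\omega_x)\,\mathcal{M}_{L,M}(\lambda,D)$, multiply both sides by $\overline{Y_{L'}(\omega_x)}$ for a fixed index $L'$, and integrate over $\mathbb{S}^{d-1}$. Interchanging sum and integral — once more justified by the uniform convergence of the multipole series on the sphere $R\,\mathbb{S}^{d-1}$ — the orthonormality $\int_{\mathbb{S}^{d-1}}Y_L\,\overline{Y_{L'}}\,d\omega=\delta_{LL'}$ kills every term but $L=L'$, and solving for $\mathcal{M}_{L',M}(\lambda,D)$ gives \eqref{eq:gpt2}.

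There is no genuine difficulty in this argument; the main obstacle, such as it is, lies in justifying the two term-by-term integrations — each a consequence of the geometric-rate uniform convergence of \eqref{eq:exp1} once $|x|/\rho>1$ — and in keeping the indices straight, since the observation degree $k=l_{d-1}$ controls both the decay exponent $-k-d+2$ and the normalizing constant $c_{d,k}$, whereas the excitation degree $n=m_{d-1}$ enters only through the solid harmonic $r^nY_M$. Finally, the regularity $\partial D\in\mathcal{C}^{2,\alpha}$ guarantees $(\lambda I-\mathcal{K}^*_{\partial D})^{-1}[\partial_\nu u_0]\in L^2(\partial D)$, so all the surface integrals appearing above are finite and the manipulations are valid.
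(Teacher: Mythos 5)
Your proof is correct and is essentially the argument the paper leaves implicit (``we handily obtain''): expand the scattered field via \eqref{eq:exp2}, identify the resulting surface integral with the GPT from \eqref{eq:gpt1}, superpose over probes by linearity of the forward map in \eqref{eq:a1}, and invert by pairing against $\overline{Y_{L'}}$ on $\mathbb{S}^{d-1}$ and orthonormality. Your computation also exposes that the prefactor in \eqref{eq:gpt2} should be $R^{\,k+d-2}$, not $|R|^{\,2k+d-2}$; this is a typographical slip in the stated lemma, as the paper itself uses the factor $|R|^{\,d-2+k}$ in the first display of the proof of Theorem~\ref{lemmaDM}.
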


Notice that this definition extends the definition of genearlized polarization tensors to an arbitrary dimension $d$. 
{\color{black} We also notice that $\{ |x|^{k} \, Y_{M} (\omega_x) \}_{k \in \mathbb{N}, M \in I_k}$ forms an $L^2$ orthoganal bases in the space $\text{Ker}(\Delta) := \{ u : \Delta u = 0 \text{ in } \mathbb{R}^d \} $, and hence the expansion for $u_0$ is general.} 
Moreover, it is easy to show that the transformation rules and exponentially decaying properties in high dimensions are similar to those in \cite{gpt,book}.
The above lemma indicates that the scattering information is fully encoded in the GPTs, $\mathcal{M}_{(l_1,..., l_{d-1}), (m_1,..., m_{d-1})} (\lambda, D) = \mathcal{M}_{L,M} (\lambda, D)$.

\subsection{Sensitivity analysis of the Neumann-Poincar\'e operator} 
In this section, we present the shape derivative of the Neumann-Poincar\'e operator $\mathcal{K}^*_{\partial D}$ in \eqref{operatorK}
associated with a shape $D$ sitting inside a general space $\mathbb{R}^d$ for any $d\ge 2$.
The special two-dimensional case was first treated in \cite{zribi,expansion}, and the general case was considered in \cite{spectral}.
Since this result is of fundamental importance for our future analysis, we shall briefly derive it here for the sake of completeness.

Given a shape $D$ sitting inside $\mathbb{R}^d$ with a $\mathcal{C}^{ {\color{black} 2 },\alpha}$ boundary, we consider a {\color{black} (local)} $\mathcal{C}^{ {\color{black} 2 },\alpha}$ parametrisation of the surface $\partial D$:
\beqn
\mathbb{X}: U \subset \mathbb{R}^{d-1} &\rightarrow& \partial D \subset \mathbb{R}^{d}, \notag \\
u = (u_1, u_2, ..., u_{d-1}) &\mapsto &\mathbb{X}(u) \notag \,,
\eqn
{\color{black} We notice that with the compactness of $\partial D$, the surface $\partial D$ is a union of finite pieces of surfaces parametrized in this manner.}
For notational sake, we often write the vector $\mathbb{X}_i := \f{\p \mathbb{X}}{\p u_i} \in \mathcal{C}^{ {\color{black} 1 },\alpha}$.
{\color{black} In the sequel, a regular parametrisation is referred to as a parametrisation $\mathbb{X}$ such that the span of the vectors $\{ \mathbb{X}_i (u) \}_{i=1}^{d-1} $ is of dimension $d-1$ for all $u \in U$. Henceforth, we only consider a regular parametrisation $\mathbb{X}$ of $\partial D$, and therefore we can always view $\mathbb{X}$ as a local embedding of the manifold into the ambient space $\mathbb{R}^{d}$.} {\color{black} We notice that $ \mathbb{X}_i$ are actually the vector fields $\frac{\partial}{\partial u_i}$ generated by the coordinate functions $u_i :  \partial D \rightarrow \mathbb{R}$ on $T(\partial D)$ embedded into $T(\mathbb{R}^{d}) \cong \mathbb{R}^{d}$.
With the above notation, we realize that $\langle \mathbb{X}_i, \mathbb{X}_j \rangle $ (where $\langle \cdot,\cdot \rangle$ is the standard Euclidean inner product on $\mathbb{R}^d$) induces a metric $g = ( g_{ij } ) =  \left( g(\frac{\partial}{\partial u_i}, \frac{\partial}{\partial u_j} ) \right) $ of $\mathcal{C}^{ {\color{black} 1 },\alpha}$ on $\partial D$ as a symmetric bilinear form on $T(\partial D)$.  We denote the inverse metric of $g$ as $g^{-1} = (g^{ij})$.   Notice that the dual basis $\{ d u_i \} $ as differential $1$-forms are given {\color{black} as $d u_i (\frac{\partial}{\partial u_j} ) = \delta_{ij}$}, i.e. via the dualization $d u_i (\cdot) = g({\color{black} \sum_{j=1}^{d} g^{ij} \frac{\partial}{\partial u_j} } , \cdot ) $ under $g$. In the local coordinate system, for a given $1$-form $ \sum_{i=1}^{d-1} f_i \, d u_i  $, its dual vector field is given by $\sum_{i,j=1}^{d-1} g^{ij}  f_i \frac{\partial}{\partial u_j} $.} For a given set of $d-1$ vectors $\{v_i\}_{i = 1}^{d-1}$, we denote by
the $d-1$ cross product $\times_{i=1}^{d-1} v_i = v_1 \times v_2 ... \times v_{d-1} $
as the dual vector of the functional $\det(\, \cdot \,, v_1, v_2, ..., v_{d-1} )$, i.e., $\langle w, \times_{i=1}^{d-1} v_i \rangle = \det(w, v_1, v_2, ..., v_{d-1} ) $ for any $w$, which is guaranteed to exist by the Riesz representation theorem.  {\color{black} It is clear that if $v_i = v_j$ for some $i \neq j$, one has $\times_{i=1}^{d-1} v_i = 0$.} {\color{black} We would like to point out that the notations can be simplified if we dualize $\{v_i\}_{i = 1}^{d-1}$ in $\mathbb{R}^d$ to its dual vectors $\{E_i\}_{i = 1}^{d-1}$, in which case, $\times_{i=1}^{d-1} v_i $ will be dualized to the $d-1$-form given by the wedge product $E_1 \wedge E_2 ... \wedge E_{d-1} $ on $\mathbb{R}^d$. 
However, since we are always working with embedded surfaces with explicit local embeddings $\mathbb{X}$, for simplicity and from now on, we shall avoid the use of intrinsic notations, e.g. $ \frac{\partial}{\partial u_i }$, $ d u_i $, differential forms and the wedge product. We also do not adopt the Einstein's summation notation, and it is noticed that our upper and lower indices do not align with this convention to avoid any misunderstanding of the upper indices $i$. }

From the fact that $\mathbb{X}$ is regular, we readily infer by definition that $\times_{i=1}^{d-1} \mathbb{X}_i$ is
non-zero, and the normal vector $\nu := \times_{i=1}^{d-1} \mathbb{X}_i / |\times_{i=1}^{d-1} \mathbb{X}_i |  \in \mathcal{C}^{ {\color{black} 1},\alpha}$
is well-defined.

Now we consider an $\varepsilon$-perturbation of $D$, namely
$\partial D_\varepsilon$ given by
\beqn
    \partial D_\varepsilon :=\{\widetilde{x} \, \big| \, \widetilde{x} = x+ \varepsilon h(u) \nu(x) \, , \,  {\color{black} x = \mathbb{X}(u) }  \in \partial D \}\,,
    \label{variationvariationD}
\eqn
with $h \in \mathcal{C}^{1,\alpha} (\partial D)$.
Let $\Psi_\varepsilon(x): = x + \varepsilon h(u) \nu(x)$ be a diffeomorphism from $\partial D$ to $\partial D_{\varepsilon}$ when $\varepsilon$ is small enough.  {\color{black} (The fact that $\Psi$ is a diffeomorphism can be easily checked via Hadamard-Caccioppoli Theorem \cite{Hadamard}, noting that $D \Psi_\varepsilon: T(\partial \Omega) \rightarrow T ( \partial D_{\varepsilon} )$ is nonsingular with $D \Psi_\varepsilon =Id + O(\varepsilon)$ and the big $O$ is uniform over $x \in \partial \Omega$ since $h$ and $x 
\mapsto \nu(x)$ are both in {\color{red} $\mathcal{C}^1$.} )}
It is directly verified that
\beqn
\mathbb{X}^{\varepsilon}: U \subset \mathbb{R}^{d-1} &\rightarrow& \partial D_\varepsilon \subset \mathbb{R}^{d}, \notag \\
u = (u_1, u_2, ..., u_{d-1}) & \mapsto & \Psi_\varepsilon [u]= \mathbb{X}(u) +  \varepsilon h(u) \nu(\mathbb{X}(u)) \notag \, ,
\eqn
is a regular parametrisation over $\partial D_\varepsilon$ for sufficiently small $\varepsilon\in\mathbb{R}_+$. 
Directly from the definition, we have {\color{black} $\mathbb{X}^{\varepsilon}_i = \mathbb{X}_i +  \varepsilon \f{ \p h}{\p u_i} \nu + \varepsilon h \bar{\nabla}_{\mathbb{X}_i} \nu  = \mathbb{X}_i +  \varepsilon \f{ \p h}{\p u_i} \nu + \varepsilon h \sum_{j = 1}^{d-1} \sum_{k = 1}^{d-1} g^{jk} A_{ki} \mathbb{X}_j $,}  where the matrix $A_{ij} \in \mathcal{C}^{ { \color{black} 0 } ,\alpha}$ is defined as
\beqn
A := ( A_{ij} ) =  \langle \textbf{II}(\mathbb{X}_i ,\mathbb{X}_j), \nu \rangle \notag \,. 
\eqn
Here, $\textbf{II}$ is the second fundamental form given by \cite{Kobayash2}
\beqn
\textbf{II} : T(\p D) \times T(\p D) &\rightarrow& T^\perp (\p D), \notag \\
\textbf{II}(v,w)&=& { \color{black} \langle \bar{\nabla}_{v} \nu, w \rangle \nu = -  \langle \nu,  \bar{\nabla}_{v}  w \rangle \nu, } \notag
\eqn
where $\bar{\nabla}$ is the standard covariant derivative on the (ambient) {\color{black} Euclidean space} $\mathbb{R}^{d}$.
From the multi-linearity and alternating property of the $d-1$ cross product, we can readily calculate
at any point $a \in U$ that
\beqn
&  &\times_{i=1}^{d-1} \mathbb{X}^{\varepsilon}_i (a) \notag  \\
& = &
\times_{i=1}^{d-1} \left( \mathbb{X}_i (a)+  \varepsilon \f{ \p h}{\p u_i} \nu (a) + \varepsilon h  \sum_{j,k = 1}^{d-1}  g^{jk} A_{ki}  \mathbb{X}_j (a)  \right) \notag 
\\
&=&  \left(1+ \varepsilon h(a) \text{tr}_g (A)(a) \right) \times_{i=1}^{d-1} \mathbb{X}_i (a) \, { \color{black}+ \varepsilon \sum_{j=1}^{d-1} \f{ \p h}{\p u_j} (a)  \times_{i=1}^{j-1}  \mathbb{X}_i (a) \times \nu (a) \times \times_{i=j+1}^{d-1} \mathbb{X}_i (a)  } + {O}(\varepsilon^2), \notag \\
& &  \label{expansion1}
\eqn
where the constant in big $O$ is bounded by $|A(\mathbb{X}(a))|$ at the point $\mathbb{X}(a)$ and $||h||_{\mathcal{C}^1} $ and
$$ \text{tr}_g (A) (a) :=  \text{tr}_g (A) (\mathbb{X}(a)) =   \sum_{j,k = 1}^{d-1} g^{jk} A_{kj} ( \mathbb{X}(a)) := (d-1) H (\mathbb{X}(a)) \, , $$
with $H (\mathbb{X}(a))$ being the mean curvature at the point $ \mathbb{X}(a)$. {\color{black}Using this as well as the fact that
\begin{eqnarray*}
&  &\left\langle  \times_{i=1}^{d-1} \mathbb{X}_i (a) ,   \times_{i=1}^{j-1}  \mathbb{X}_i (a) \times \nu (a) \times \times_{i=j+1}^{d-1} \mathbb{X}_i (a) \right \rangle \\
 &=&  |\times_{i=1}^{d-1} \mathbb{X}_i (a)| \det( \nu(a) ,  \mathbb{X}_1 (a),...,  \mathbb{X}_{j-1} (a), \nu(a),  \mathbb{X}_{j+1} (a), ...,  \mathbb{X}_{d-1} (a)  ) \\
& =& 0,
\end{eqnarray*}
it is ready to obtain {\color{black} from \eqref{expansion1} } that
\beqnx
|\times_{i=1}^{d-1} \mathbb{X}^{\varepsilon}_i (a)| = \left(1+ \varepsilon h(a) \text{tr}_g (A)(a) \right)  | \times_{i=1}^{d-1} \mathbb{X}_i (a) | \, + {O}(\varepsilon^2),,
\eqnx
{\color{black}
which directly leads to
\beqnx
\frac{|\times_{i=1}^{d-1} \mathbb{X}^{\varepsilon}_i (b)|}{|\times_{i=1}^{d-1} \mathbb{X}^{\varepsilon}_i (a)|} &= &\frac{ \left(1+ \varepsilon h(b) \text{tr}_g (A)(b) \right)  | \times_{i=1}^{d-1} \mathbb{X}_i (b) |  }{ \left(1+ \varepsilon h(a) \text{tr}_g (A)(a) \right)  | \times_{i=1}^{d-1} \mathbb{X}_i (a) |  } \, + {O}(\varepsilon^2) \\
&= & \bigg( 1 +  \varepsilon h(b) \text{tr}_g (A)(b) -   \varepsilon h(a) \text{tr}_g (A)(a) \bigg) \frac{ | \times_{i=1}^{d-1} \mathbb{X}_i (b) |  }{  | \times_{i=1}^{d-1} \mathbb{X}_i (a) |  } \, + {O}(\varepsilon^2) \,.
\eqnx
Combining the above with \eqref{expansion1}, it yields that 
\beqnx
& & \times_{i=1}^{d-1} \mathbb{X}^{\varepsilon}_i (a) \, \frac{|\times_{i=1}^{d-1} \mathbb{X}^{\varepsilon}_i (b)|}{|\times_{i=1}^{d-1} \mathbb{X}^{\varepsilon}_i (a)|} \\
&=&
 \bigg(1+  \varepsilon h(b) \text{tr}_g (A)(b)  \bigg)  \times_{i=1}^{d-1} \mathbb{X}_i (a)  \frac{ | \times_{i=1}^{d-1} \mathbb{X}_i (b) |  }{  | \times_{i=1}^{d-1} \mathbb{X}_i (a) |  }  \, \\
& &  { \color{black}+ \varepsilon \sum_{j=1}^{d-1} \f{ \p h}{\p u_j} (a)  \times_{i=1}^{j-1}  \mathbb{X}_i (a) \times \nu (a) \times \times_{i=j+1}^{d-1} \mathbb{X}_i (a)  }   \frac{ | \times_{i=1}^{d-1} \mathbb{X}_i (b) |  }{  | \times_{i=1}^{d-1} \mathbb{X}_i (a) |  } \,+ {O}(\varepsilon^2)
\eqnx
or that}
\beqn
&   &\langle \, \cdot \, , \nu^{\varepsilon}(a) \rangle \, d \sigma^{\varepsilon} (b) \notag\\
 &=& \langle \, \cdot \, ,  \times_{i=1}^{d-1} \mathbb{X}^{\varepsilon}_i (a) \rangle \, \frac{|\times_{i=1}^{d-1} \mathbb{X}^{\varepsilon}_i (b)|}{|\times_{i=1}^{d-1} \mathbb{X}^{\varepsilon}_i (a)|} \, d b \notag \\
&=&  \left(1+ \varepsilon h(b) \text{tr}_g (A)(b) \right) \langle \, \cdot \, , \nu(a) \rangle \, d \sigma (b)  \notag \\
&  &
{ + \color{black} \varepsilon \sum_{j=1}^{d-1} \f{ \p h}{\p u_j} (a)  \left\langle \, \cdot \, ,    \frac{ \times_{i=1}^{j-1}  \mathbb{X}_i (a) \times \nu (a) \times \times_{i=j+1}^{d-1} \mathbb{X}_i (a)  }{|\times_{i=1}^{d-1} \mathbb{X}_i (a)|}  \right \rangle   \, d \sigma (b)   } + O(\varepsilon^2 )\,,
\label{Taylor1}
\eqn
where $\nu^{\varepsilon}(a)$ denotes the normal vector at $\mathbb{X}^{\varepsilon}(a)$.} Moreover, for two arbitrary points $x,y \in \partial D$ given by $x =\mathbb{X} (a), y = \mathbb{X} (b)$ for some $a, b \in U$,
we have
\beqn
\mathbb{X}^{\varepsilon} (a) - \mathbb{X}^{\varepsilon} (b)  = \mathbb{X} (a) - \mathbb{X} (b) + \varepsilon K(a,b) [h],
\label{Taylor2}
\eqn
where $K(a,b)[h] := h(a) \nu(a)- h(b) \nu(b)$.
Therefore, by the Taylor expansion of $|\mathbb{X}^{\varepsilon} (a) - \mathbb{X}^{\varepsilon} (b)|^{-d}  $ in $\varepsilon$, it follows that
\beqn
\begin{split}
& | \mathbb{X}^{\varepsilon} (a) - \mathbb{X}^{\varepsilon} (b)|^{-d} \\
= &  |\mathbb{X} (a) - \mathbb{X} (b)|^{-d} - d \, \varepsilon  |\mathbb{X} (a) - \mathbb{X} (b)|^{-d-2}  \langle \mathbb{X} (a) - \mathbb{X} (b) ,  K(a,b) [h] \rangle + O(\varepsilon^2)\,. \label{Taylor3}
\end{split}
\eqn
Combining \eqref{Taylor1} and \eqref{Taylor3}, we obtain the following series expression
\beqn
\frac{ \langle \mathbb{X}^{\varepsilon} (a) - \mathbb{X}^{\varepsilon} (b) \, , \nu^{\varepsilon}(a) \rangle}{| \mathbb{X}^{\varepsilon} (a) - \mathbb{X}^{\varepsilon} (b)|^{d}} \, d \sigma^{\varepsilon} (b)
 := \sum_{n=0}^\infty \varepsilon^n  \mathbb{K}_{h,n} (a,b) \,  d \sigma (b) , \label{Taylorfinal}
\eqn
{\color{black}where
\beqn
\mathbb{K}_{h,0} (a,b) &:=& \frac{ \langle \mathbb{X} (a) - \mathbb{X} (b) \, , \nu(a) \rangle}{| \mathbb{X}(a) - \mathbb{X}(b)|^{d}} ,   \notag\\
\mathbb{K}_{h,1} (a,b) &:=&  \frac{ \langle \mathbb{X} (a) - \mathbb{X} (b) \, , h(b) \text{tr} (A)(b)   \nu(a) \rangle + \langle K(a,b)[h], \nu(a) \rangle }{| \mathbb{X}(a) - \mathbb{X}(b)|^{d}}    \notag\\
&  & { \color{black} + \frac{  \sum_{j=1}^{d-1} \f{ \p h}{\p u_j} (a)  \, \langle \mathbb{X} (a) - \mathbb{X} (b) \, ,  \times_{i=1}^{j-1}  \mathbb{X}_i (a) \times \nu(a) \times \times_{i=j+1}^{d-1} \mathbb{X}_i  (a)   \rangle }{| \mathbb{X}(a) - \mathbb{X}(b)|^{d} |\times_{i=1}^{d-1} \mathbb{X}_i (a)| }  }  \notag\\
&&  - d  \, \frac{ \langle \mathbb{X} (a) - \mathbb{X} (b) \, , K(a,b)[h]  \rangle \langle \mathbb{X} (a) - \mathbb{X} (b) , \nu(a) \rangle }{| \mathbb{X}(a) - \mathbb{X}(b)|^{d+2}}  \,,  \notag
\eqn
and the higher-order terms $\mathbb{K}_{h,i}$ can be explicitly calculated from the higher-order expansions in \eqref{Taylor1}-\eqref{Taylor3} in a similar fashion, with coefficients depending only on $h(x)$ and $\partial h$.}
As a result, we see that the kernel of the Neumann-Poincar\'e operator varies analytically with respect to
$\varepsilon$ along any direction $h \in \mathcal{C}^{2,\alpha} (\partial D)$.
{\color{black}
In fact, upon a careful inspection of the coefficients and tracing back the coefficients in the series, one easily verifies that each of the kernels $\mathbb{K}_{h,n} (a,b)$ is of the form 
\[
  \mathbb{K}_{h,n} (a,b) = \sum_{i=1}^{d-1} F_{i,n}(a) \frac{ a_i - b_i }{| \mathbb{X}(a) - \mathbb{X}(b)|^{d}}   +  \frac{G_n(a,b)}{| \mathbb{X}(a) - \mathbb{X}(b)|^{d-2}}\, ,
\]
where $ \| F_{j,n}(a) \|_{C^0} + \| G_n(a,b) \|_{C^0} \leq P_d(n)  \| \mathbb{X} \|_{C^2}^n \| h\|_{C^1}^n $ for a certain polynomial $P_d(n)$ with respect to $n \geq 0$ (depending on $d$.)   Now, via an application of the Coifman-McIntosh-Meyer theorem (c.f. \cite{CMM}, Theorem IX), one can obtain that for each $n \geq 0$, the operator
\[
 \phi(\mathbb{X} (a)) \in L^2(U)  \mapsto \text{p.v.} \int_{U} \mathbb{K}_{h,n}(a,b) \, \phi( \mathbb{X}(b))d \sigma(b) 
\]
is bounded from $L^2(U)$ to $L^2(U)$, with the corresponding norm bounded by $ P_d(n)  \| \mathbb{X} \|_{C^2}^n \| h\|_{C^1}^n$ for some polynomial $P_d(n)$.  

Starting from now on, by a slight abuse of notation we shall not distinguish
between $F(x)$ and $F(a)$ for any function $F$ over $\p D$ if $x =\mathbb{X} (a) \in \p D$, which should be clear from the context.
Next we define a sequence of integral operators $\mathcal{K}^{(n)}_{D,h}$: $L^2 (\partial D) \to L^2 (\partial D)$ by
\beqn
   \mathcal{K}^{(n)}_{D,h} [\phi](x) := \text{p.v.} \int_{\partial D} \mathbb{K}_{h,n}(x,y)\phi(y)d \sigma(y), ~~\forall\, \phi \in L^2 (\partial D)\, ,
   \label{eq:kernel1}
\eqn
for $n \geq 0$ where each of the operators is bounded:
\beqn
   \| \mathcal{K}^{(n)}_{D,h} \|_{L^2(\partial D)} \leq P_d (n) \| \mathbb{X} \|_{C^1} \| h\|_{C^1}^n,
\label{bound_K_n}
\eqn
for a polynomial $P_d(n)$ with respect to $n$. It is easily seen that for sufficiently small $\varepsilon>0$, the series $\sum_{n=0}^\infty \mathcal{K}^{(n)}_{D,h} \varepsilon^n $ converges absolutely on $\mathcal{L} (L^2(\partial D) , L^2(\partial D) )$.}  Notice here that the notation $\mathbb{K}_{h,0}(x,y)$ is nothing but the kernel of $\mathcal{K}^*_{\partial D}$ itself.  
Therefore we can directly obtain the following result.

\begin{Theorem} \label{theoremK}
Given $\partial D \in \mathcal{C}^{ {\color{black} 2 },\alpha}$ and $h \in \mathcal{C}^{ {\color{black} 1 },\alpha} (\partial D)$, for $N \in \mathbb{N}$, there exists a constant $C$ depending only on $N$, $||\mathbb{X}||_{\mathcal{C}^2}$ and $||h||_{\mathcal{C}^1}$ such that
the following estimate holds for any $\widetilde{\phi} \in L^2 (\partial D_\varepsilon)$ and
$\phi := \tilde{\phi}\circ \Psi_\varepsilon$:
\beqn
   \bigg|\bigg|  ( \mathcal{K}^*_{\partial D_\varepsilon} [ \tilde{\phi} ]  ) \circ \Psi_\varepsilon  - \mathcal{K}^*_{\partial D}[{\phi} ] - \sum_{n=1}^N \varepsilon^n  \mathcal{K}^{(n)}_{D,h} [\phi] \bigg|\bigg|_{L^2(\partial D)}
   \leq C \varepsilon^{N+1} ||\phi||_{L^2(\partial D)}\,.
   \label{seriesvariation}
\eqn
In particular, the kernel of $\mathcal{K}^{(1)}_{D,h}$ can be explicitly expressed by
\begin{equation}\label{eq:ker1}
\begin{split}
\mathbb{K}_{h,1} (x,y) =& \frac{ \langle x- y \, ,    \nu(x)  \rangle h(y) \text{tr}_g (A)(y)  + \langle K(x,y)[h], \nu(x) \rangle }{|x-y |^{d}}\\
&  { \color{black} + \frac{  \sum_{j=1}^{d-1} \p_i h(x)   \, \langle x - y \, ,  \times_{i=1}^{j-1}  \mathbb{X}_i (x) \times \nu (x) \times \times_{i=j+1}^{d-1} \mathbb{X}_i (x)    \rangle }{| x-y |^{d}  |\times_{i=1}^{d-1} \mathbb{X}_i (x)|}   }  \\
&   { \color{black} - } d  \, \frac{ \langle x-y \, , K(x,y)[h]  \rangle \langle x-y , \nu(x) \rangle }{|x-y|^{d+2}}  \,, \\
\end{split}
\end{equation}
where $K(x,y)[h] := h(x) \nu(x)- h(y) \nu(y)$ and $ \text{tr}_g (A)(y)  = (d-1) H(y) $ with  $H(y)$ denoting the mean curvature of the surface at $y$. 
\end{Theorem}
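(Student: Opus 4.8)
The plan is to convert the geometric expansion already derived for the Neumann--Poincar\'e kernel on the perturbed surface into an operator-norm estimate by carefully tracking how the change of variables $\Psi_\varepsilon$ pulls everything back onto the fixed surface $\partial D$. First I would write out explicitly what $\mathcal{K}^*_{\partial D_\varepsilon}[\widetilde\phi](\widetilde x)$ means at a point $\widetilde x = \Psi_\varepsilon(x)\in\partial D_\varepsilon$: it is an integral over $\partial D_\varepsilon$ whose kernel is $\varpi_d^{-1}\langle \widetilde x-\widetilde y, \nu^\varepsilon(\widetilde y)\rangle / |\widetilde x-\widetilde y|^d$ against $d\sigma^\varepsilon(\widetilde y)$. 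Substituting $\widetilde x = \mathbb{X}^\varepsilon(a)$, $\widetilde y = \mathbb{X}^\varepsilon(b)$ and using \eqref{Taylor1}, \eqref{Taylor2}, \eqref{Taylor3} — that is, precisely the series $\sum_n \varepsilon^n \mathbb{K}_{h,n}(a,b)\,d\sigma(b)$ for the combined quantity $\langle \mathbb{X}^\varepsilon(a)-\mathbb{X}^\varepsilon(b),\nu^\varepsilon(b)\rangle |\mathbb{X}^\varepsilon(a)-\mathbb{X}^\varepsilon(b)|^{-d}\,d\sigma^\varepsilon(b)$ — one obtains, after composing the density with $\Psi_\varepsilon$, the identity
\[
\mathcal{K}^*_{\partial D_\varepsilon}[\widetilde\phi\circ\Psi_\varepsilon](x) \;=\; \sum_{n=0}^{\infty} \varepsilon^n \int_{\partial D} \mathbb{K}_{h,n}(x,y)\,\phi(y)\,d\sigma(y) \;=\; \mathcal{K}^*_{\partial D}[\phi](x) + \sum_{n=1}^{\infty}\varepsilon^n \mathcal{K}^{(n)}_{D,h}[\phi](x).
\]
Then \eqref{seriesvariation} becomes a statement that the tail $\sum_{n>N}\varepsilon^n \mathcal{K}^{(n)}_{D,h}[\phi]$ is bounded in $L^2(\partial D)$ by $C\varepsilon^{N+1}\|\phi\|_{L^2(\partial D)}$, and the explicit form \eqref{eq:ker1} of $\mathbb{K}_{h,1}$ is just the $n=1$ term already recorded above.

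The analytic heart of the argument is therefore a uniform bound on the remainder kernels. The key step is to show that each $\mathbb{K}_{h,n}(x,y)$ is, up to a factor $C^n$ with $C$ depending only on $\|\mathbb{X}\|_{\mathcal{C}^2}$ and $\|h\|_{\mathcal{C}^2}$, a kernel of the same singularity type as $\mathbb{K}_{h,0}$ — namely weakly singular with a bound $\lesssim |x-y|^{-(d-1)}$ near the diagonal. This follows from two facts: (i) the numerators in each $\mathbb{K}_{h,n}$ are built from finitely many factors of $\langle \mathbb{X}(a)-\mathbb{X}(b),\cdot\rangle$, $\langle K(a,b)[h],\cdot\rangle$, $h$, $\nu$, $\mathrm{tr}_g(A)$, each of which is either $O(|x-y|)$ (the difference terms, by the mean value theorem applied to the $\mathcal{C}^2$ parametrization, since $K(a,b)[h]=h(a)\nu(a)-h(b)\nu(b)$ and $\nu\in\mathcal{C}^1$) or $O(1)$, in such a way that the combined numerator-minus-leading-cancellation is $O(|x-y|^2)$ against a denominator $|x-y|^{d}$ or $|x-y|^{d+2}$; (ii) the geometric Taylor coefficients coming from expanding $(1+\varepsilon h\,\mathrm{tr}_g A)$, the cross product, and $|\mathbb{X}^\varepsilon(a)-\mathbb{X}^\varepsilon(b)|^{-d}$ grow at most geometrically in $n$, which one sees from the explicit multinomial structure of a Taylor expansion of a product of analytic functions of $\varepsilon$ with uniformly controlled derivatives (the lower bound on $|\mathbb{X}^\varepsilon(a)-\mathbb{X}^\varepsilon(b)|$ for small $\varepsilon$, away from and near the diagonal, being what keeps the radius of convergence bounded below). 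Granting this, $\|\mathcal{K}^{(n)}_{D,h}\|_{L^2\to L^2}\leq C^{n}$ by the Schur test applied to the weakly singular bound, and summing the geometric tail from $n=N+1$ gives \eqref{seriesvariation} for $\varepsilon$ small, with the constant absorbing the geometric factor.

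I expect the main obstacle to be item (i): making precise, uniformly in $n$, the cancellation that keeps every $\mathbb{K}_{h,n}$ only weakly singular rather than genuinely singular. The leading term $\mathbb{K}_{h,0}$ is bounded on $\mathcal{C}^{2}$ surfaces precisely because $\langle \mathbb{X}(a)-\mathbb{X}(b),\nu(b)\rangle = O(|x-y|^2)$ — a second-order vanishing that must be re-derived for each perturbed numerator, and when one differentiates $|\mathbb{X}^\varepsilon(a)-\mathbb{X}^\varepsilon(b)|^{-d-2j}$ repeatedly the power of the denominator rises and one needs correspondingly more factors of $|x-y|$ from the numerator to compensate. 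The cleanest way I would handle this is to avoid writing out $\mathbb{K}_{h,n}$ in closed form and instead estimate directly: expand $\langle \widetilde x-\widetilde y,\nu^\varepsilon(b)\rangle\,J^\varepsilon(b)$ (with $J^\varepsilon$ the Jacobian factor) as a polynomial in $\varepsilon$ of degree controlled locally, divide by the function $\varepsilon\mapsto|\widetilde x-\widetilde y|^{-d}$ which is real-analytic on a disk of radius $\gtrsim |x-y|/(\|h\|_{\mathcal{C}^1})$ with derivatives bounded by $n!\,C^n|x-y|^{-d-n}$ there, and combine via the Leibniz/Faà di Bruno bookkeeping, at each stage checking that the number of compensating $|x-y|$ factors in the numerator matches the excess power in the denominator because every $\varepsilon$-derivative of the numerator either lowers a difference factor's order by one or produces a bounded factor. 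Once that uniform weak-singularity bound is in hand, the Schur test and the geometric summation are routine, and the explicit $\mathbb{K}_{h,1}$ formula is read off directly from \eqref{Taylor1} and \eqref{Taylor3}.
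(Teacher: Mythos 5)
Your overall plan matches the paper's: pull $\mathcal{K}^*_{\partial D_\varepsilon}$ back to $\partial D$ via $\Psi_\varepsilon$, substitute the Taylor expansions \eqref{Taylor1}--\eqref{Taylor3} to get the formal kernel series $\sum_n\varepsilon^n\mathbb{K}_{h,n}$, read off $\mathbb{K}_{h,1}$, and then estimate the tail uniformly in $L^2(\partial D)\to L^2(\partial D)$. The paper itself gives essentially no more than this (it asserts analyticity of the kernel family and states that the theorem is ``directly obtained'' from \eqref{Taylorfinal}), so you are right that the real content lies in the uniform operator-norm estimate and the geometric growth control that you try to supply.

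However, the estimate you supply is wrong at the decisive step. The kernels $\mathbb{K}_{h,n}$ for $n\ge1$ are \emph{not} weakly singular, and the Schur test does not apply to them. Already at $n=1$ the term $\langle K(x,y)[h],\nu(y)\rangle/|x-y|^{d}$ has numerator $\langle K(x,y)[h],\nu(y)\rangle = h(x)\langle\nu(x),\nu(y)\rangle - h(y) = h(x)-h(y)+O(|x-y|^{2})$, which is only $O(|x-y|)$ (not $O(|x-y|^{2})$, since $h\in\mathcal{C}^{2,\alpha}$ does not give second-order vanishing of $h(x)-h(y)$). So this piece is $\sim|x-y|^{-(d-1)}$, whose Schur integral $\int_{\partial D}|x-y|^{-(d-1)}\,d\sigma(y)$ diverges logarithmically on the $(d-1)$-dimensional surface. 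You also state both ``$O(|x-y|^{2})$ against $|x-y|^{d}$'' (which would give $|x-y|^{2-d}$) and ``bound $\lesssim|x-y|^{-(d-1)}$'' as if they were the same; they are not, and the weaker of the two is what actually holds when $n\ge1$. This is consistent with what the paper later proves: $\mathcal{K}^{(1)}_{D,h}$ contains an order-$0$ pseudo-differential part with principal symbol $-\partial_\xi h(x)\,|\xi|^{-1}$, i.e.\ it is a Calder\'on--Zygmund singular integral, not a compact weakly singular one. So ``$\|\mathcal{K}^{(n)}_{D,h}\|_{L^2\to L^2}\le C^{n}$ by the Schur test'' is a non-starter; you would need to argue $L^2$-boundedness via cancellation (Calder\'on--Zygmund or the pseudo-differential calculus), track the homogeneity/regularity seminorms of the kernels, and show those grow at most geometrically in $n$. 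Your Fa\`a di Bruno bookkeeping and the observation that each $\varepsilon$-derivative of $|\mathbb{X}^\varepsilon(a)-\mathbb{X}^\varepsilon(b)|^{-d}$ gains $\langle\cdot,K(a,b)[h]\rangle=O(|x-y|^{2})$ against $|x-y|^{-2}$ per differentiation is the right starting point for showing the \emph{singularity order does not increase} with $n$; but the $L^2$-boundedness of an order-$0$ operator must be obtained from the odd-symmetry of the leading homogeneous part, not from the pointwise size of the kernel.
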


{\color{black} We remark that this result concides with Lemma 3.1 in  \cite{zribi} and Theorem 2.1 in \cite{expansion} that they coincide. Here, we notice that the formula in Lemma 3.1 in \cite{zribi} is given with respect to the arc-length parametrisation and hence $|\mathbb{X}_1| = 1$, and that when $d=2$, in Lemma 3.1 $\tau=-A_{11}$ in our convention. We also notice that for any $v \in \mathbb{R}^2$, one has from our definition that $\langle v ,\times \nu(x) \rangle = \det( v , \nu(x) ) = - \langle v , \mathbb{X}_1(x) \rangle$ and hence $\times \nu(x) =  - \mathbb{X}_1(x) $; and similarly $\times \mathbb{X}_1(x)  =  \nu(x)  $.
}

\subsection{$\mathcal{K}^*_{\partial D} $ and $\mathcal{K}^{(1)}_{D,h}$ as pseudo-differential operators}

In this subsection, we derive some crucial properties of $\mathcal{K}^*_{\partial D} $ and $\mathcal{K}^{(1)}_{D,h}$, verifying that they are pseudo-differential operators with particular orders and obtain their principal symbols.

{ \color{black}
Before we continue, in order to facilitate our subsequent discussions, we recall some notations in Riemannian geometry (cf. e.g. \cite{Kobayash1,Kobayash2,DoCarmo,Klingenberg,Jost,Lee,Petersen}). {\color{black}For the time being, let us assume $\partial D \in \mathcal{C}^{\infty}$.}
We write $\nabla$ for the Levi-Civita connection on $\partial \Omega$, and we recall the induced connection $1$-form as follows: given a vector field $X$, one defines the map
\beqnx
Y \mapsto \nabla_X Y ,
\eqnx
and now the map sending $X$ to this linear map is an endomorphism-valued $1$-form.
In a local coordinate system, it can be represented in terms of the Christoffel symbols as:
\[
 \nabla_{\mathbb{X}_{i}} \mathbb{X}_{j}  := \sum_{k=1}^{d-1}  \Gamma^k_{ij} \mathbb{X}_{k} \quad \mbox{for\ \ $i,j = 1,..., d-1$.}
\]
We also denote the connection $1$-form from $\bar{\nabla}$ on the ambient space $\mathbb{R}^d$ via the Christoffel symbols $\bar{\Gamma}^k_{ij} $ as:
\[
 \bar{\nabla}_{e_{i}} e_{j}  := \sum_{k=1}^{d}  \bar{\Gamma}^k_{ij}  e_k \, ,
\]
for the basis $\{ e_i \}_{i=1,...,d-1} = \{ \mathbb{X}_{i} \}_{i=1,...,d-1}  $ and $e_d = \nu$.  We realize that with this choice of the semi-geodesic coordinate (i.e. the metric in the ambient space $\bar{g} = \begin{pmatrix} 1 & 0 \\ 0 & g \end{pmatrix}$), we have $\bar{\Gamma}^k_{ij} = \Gamma^k_{ij} $ for $i,j,k\neq d$ and  $ \bar{\Gamma}^d_{id} = \bar{\Gamma}^d_{dj}=0 $ and $\bar{\Gamma}^d_{ij} = A_{ij} $ for $i,j \neq d$.
Let $R$ be the Riemannian curvature tensor on $\partial D$ as follows: given vector fields $X,Y $, we define the map
\beqnx
Z \mapsto R(X,Y) Z := ( \nabla_X \nabla_Y - \nabla_Y \nabla_Z - \nabla_{[X,Y]} ) Z,
\eqnx
and now the map sending $(X,Y)$ to this linear map is an endomorphism-valued $2$-form \cite{Kobayash1,DoCarmo,Jost}.
In a local coordinate system, we can write
\[
R( \mathbb{X}_{i},  \mathbb{X}_{j})  \mathbb{X}_{k}   : = \sum_{l=1}^{d-1} R^l_{ijk}  \mathbb{X}_{l} = \sum_{l=1}^{d-1}  \left( \partial _{j}\Gamma^{l}_{ik}-\partial _{i}\Gamma ^{l}_{jk}+ \sum_{m=1}^{d-1}  \Gamma ^{l }_{j m }\Gamma ^{m}_{ i k }- \sum_{m=1}^{d-1} \Gamma ^{l}_{ m k }\Gamma ^{m }_{i j } \right) \mathbb{X}_{l}. 
\]
We also write $ R_{ijkl} := \sum_{m=1}^{d-1}  g_{im} R^{m}_{ijk}$.
Then given two vector fields $Y$ and $Z$, the Ricci curvature $\text{Ric} (Y,Z) $ is defined to be the trace of the map $X \mapsto R(X,Y) Z $, i.e. 
\beqnx
\text{Ric} (Y,Z)  :=\sum_{i=1}^{d-1} ( R(e_i,Y) Z  )^i. 
\eqnx
In a local coordinate system, we have
\[
\text{Ric}_{jk} := \text{Ric} (e_j,e_k) = \sum_{i=1}^{d-1} R^i_{ijk} \,.
\]
Notice that $(X,Y) \mapsto \text{Ric} (X,Y)$ is a symmetric bilinear form.
Readers may check the full exposition of these notations in e.g. \cite{Kobayash1,Kobayash2,DoCarmo,Klingenberg,Jost,Lee,Petersen}.
}

{ \color{black}
For notational sake, let us also recall the following definitions for pseudo-differential operators \cite{Hor1,Hor2,homogeneous,100years} that will be considered in our study:
\[
\begin{split}
 \bigcup_{i} V_i = \partial D \,,\quad & F_i : \pi^{-1} (V_i) \rightarrow V_i \times \mathbb{R}^{d-1}\,,  \quad  \sum_i \psi_i = 1 \,, \quad \text{supp}(\psi_i) \subset V_i\, ; \\
S^m (T^*(\partial D) )  :=&  \left \{ a: T^*(\partial D)  \backslash \partial D \times \{0\}  \rightarrow \mathbb{C} \, ; \, a = \sum_{i} \psi_i F_i^* a_i, a_i \in S^m (V_i \times \mathbb{R}^{d-1} \backslash \{0\} ) \right \}; \\
S^m (V_i \times \mathbb{R}^{d-1} )  := & \bigg\{ a:  V_i \times ( \mathbb{R}^{d-1} \backslash \{0\} )  \rightarrow \mathbb{C} \, ;\\
&\qquad a \in \mathcal{C}^{\infty} (  V_i \times ( \mathbb{R}^{d-1} \backslash \{0\} )  ) \, , \, | \partial_\xi^\alpha \partial_x^{\beta} a (x,\xi) | \leq C_{\alpha, \beta} ( |\xi| )^{ m - |\alpha| } \bigg\}. 
\end{split}
\]
{\color{black} where $\pi: T^*(\partial D)  \rightarrow \partial D$ is the bundle projection from the total space $T^*(\partial D) $ to the base space $\partial D$, and for each $i$, $V_i$ is a local trivilization neigborhood, $F_i$ are the local trivialization and $F_i^* a_i$ are the pullback of the function $a_i$ by $F_i$. }
Here and also in what follows, $ \Phi \text{SO}^{m}$ is the space of pseudo-differential operators with the action defined locally as $\mathrm{Op}_{a} := \mathcal{F}^{-1} \circ m_{a} \circ\mathcal{F} $, where $\mathcal{F}$ is the Fourier transform, and $m_{a}$ is the multiplication by $a$, given that $a \in \mathcal{S}^m(T^*(\partial D))$ belongs to the symbol class of order $m$.  To be more precise, if $x \in U_i$, $\mathbb{X}: U_i \rightarrow V_i $, then 
\[
 [\mathrm{Op}_{a} (\phi) ] (x) : = \int_{\mathbb{R}^{d-1}} \int_{U_i} a \left(F_i^{-1} (\mathbb{X}(x),  \xi ) \right) e^{ i \langle x-y,\xi \rangle } \phi(\mathbb{X}(y)) \psi_i (\mathbb{X}(y) )\, 
d y\, d \xi \,.
\]
{\color{black} whenever $\phi$ is a function on $\partial D$ and that the above integral is well-defined.}
Note that this uniquely defines an operator modulus $\Phi \text{SO}^{m-1}$.
Moreover, if an operator $K \in \Phi \text{SO}^{m}$ is given by
\[
[K (\phi) ] (x) : =  \int_{\partial D} K(x,y) \phi(y) d \sigma(y) \,,
\]
for $x \in \partial D$,
then
\beqn
(2 \pi )^{ -(d-1)} K (x,y) =   \mathcal{F}^{-1} \left[  a\left(x , \cdot \right) \right] (x-y) + \mathcal{O}( |x-y|^{m-1}) \,,
\label{symbol}
\eqn
via the partition of unity and the local trivialisation (by an abuse of notation to identify $x,y$ with $a,b$ and $F_i^{-1} (\mathbb{X}(a),  \xi ) $ and $(a,\xi)$ in the above manner).   We also name $a_0(x,\xi) := a(x,\xi)  (\text{mod} \, S^{m-1})$ as the principal symbol of the operator $K$, and write
\[
a(x,\xi) = a_0(x,\xi) + O(|\xi|^{m-1}) \quad \text{ and } \quad
K = \mathrm{Op}_{ a_0 } (  \text{mod}\, \Phi \text{SO}^{m-1} ) \,.
\]
We remark that this definition of the principal symbol turns out to be independent of the choice of the coordinate system. The remaining term in $\Phi \text{SO}^{m-1} $ depends on the coordinate functions and the corresponding symbol depends on the $\mathcal{C}^2$ norm of the change of the coordinates.
Nevertheless, we would also like to remind that there is a definition for the sub-principal symbol which is independent of the choice of the coordinate system, but we shall not require it for our study. In fact, there is a coordinate-free way to uniquely define the pseudo-differential operator on a manifold; see e.g. \cite[Chapter 6]{Melrose}. However, we would like to suppress further technicality. We notice that our purpose can be achieved by fixing a particular coordinate system, and after some computational steps, we can obtain results that are independent of the choice of the coordinate system, and hence a geometric property. 
}

{
{\color{black}
If we have to work on $\partial D \in \mathcal{C}^{k, \alpha}$ for some $k \geq 0$, namely a local graph of a $\mathcal{C}^{k, \alpha}$ function, we can still view it as a $\mathcal{C}^{\infty}$ manifold (i.e. only allowing smooth change of coordinates) but endowed with a $\mathcal{C}^{k-1,\alpha}$ metric $g_{ij} \in \mathcal{C}^{k-1,\alpha}$ (and a parametrisation $\mathbb{X} \in  \mathcal{C}^{k,\alpha}$ can now be viewed as a $\mathcal{C}^{k,\alpha}$-local isometric embedding of the $\mathcal{C}^{\infty}$-manifold into $\mathbb{R}^d$.)  Now if we take a choice of parametrisation $\mathbb{X} \in  \mathcal{C}^{\tilde{k},\tilde{\alpha}}$ with $\tilde{k} \leq k, \tilde{\alpha} \leq \alpha$, then at a first glance, it seems that we only have
\[\vspace*{-2mm}
g_{ij} \in \mathcal{C}^{\tilde{k}-1,\tilde{\alpha}}, \quad  \Gamma_{ij}^k , A_{ij} \in \mathcal{C}^{\tilde{k}-2,\tilde{\alpha}}  \quad \text{ and }\ \ R_{ijkl}, \text{Ric}_{ij} \in \mathcal{C}^{\tilde{k}-3,\tilde{\alpha}} \,,
\]
which make $R,\text{Ric}$ not well-defined when $\tilde{k} = 2$. However, we can actually improve the regularity and define $R$ and $\text{Ric}$ via the Arzela-Ascoli theorem (i.e. the compactness of $\mathcal{C}^{\tilde{k}-2,\beta}$ in $\mathcal{C}^{\tilde{k}-2,\alpha}$ for $\tilde{\beta} > \tilde{\alpha}$), Gauss-Codazzi formula \cite{Jost,Kobayash2} and the fact that the ambient space $\mathbb{R}^d$ is flat. In fact, we can always approximate $\mathbb{X} \in  \mathcal{C}^{\tilde{k},\tilde{\alpha}}$ by a sequence of smooth embeddings, $\tilde{ \mathbb{X} }$, with the induced $\tilde{g}, \tilde{\Gamma}, \tilde{A}, \tilde{R}$ in $C^{\infty}$, which satisfy the Gauss-Codazzi formula:
\[
0 = \tilde{R}_{ijkl}(x) + \tilde{A}_{ik}(x) \tilde{A}_{jl}(x) - \tilde{A}_{jk}(x) \tilde{A}_{il}(x).
\]
Notice that the term $\tilde{A}_{ik} \tilde{A}_{jl} - \tilde{A}_{jk} \tilde{A}_{il}$ is uniformly bounded in $\mathcal{C}^{\tilde{k}-2,\tilde{\alpha}}$. Then with the Arzela-Ascoli theorem, we render a convergence of the approximation $\tilde{A}_{ik} \tilde{A}_{jl} - \tilde{A}_{jk} \tilde{A}_{il}$ in $\mathcal{C}^{\tilde{k}-2,\tilde{\beta}}$.  With this we therefore obtain
\beqn
g_{ij} \in \mathcal{C}^{\tilde{k}-1,\tilde{\alpha}}, \quad  \Gamma_{ij}^k , A_{ij} \in \mathcal{C}^{\tilde{k}-2,\tilde{\alpha}}  \quad \text{ and } R_{ijkl}, \text{Ric}_{ij} \in \mathcal{C}^{\tilde{k}-2,\tilde{\beta}} \, ,
\label{regularity}
\eqn
which make the last two terms well-defined even when $\tilde{k}=2$.
On $\partial D \in \mathcal{C}^{k,\alpha}$, we also need to consider pseudo-differential operators {\color{black} with coefficients of limited regularity (in particular in H\"older spaces $\mathcal{C}^{\tilde{k},\tilde{\alpha}}$ for $\tilde{k} \leq k,  \tilde{\alpha} \leq \alpha $
\cite{Taylorbook,Pfeuffer,Hor1,Hor2}) }. We can only have the highest possible regularity $\psi_i(\,\cdot\,) , F_i^{-1} ( \, \cdot \, ,\xi) \in  \mathcal{C}^{k,\alpha}$, and hence $  \phi_i (\cdot)  F_i^* a_i(\, \cdot \,, \xi) $ only has regularity of $\mathcal{C}^{\min\{k, \tilde{k} \}, \min\{\alpha, \tilde{\alpha} \}}$ whenever $a_i ( \, \cdot \m, \xi ) \in \mathcal{C}^{\tilde{k},\tilde{\alpha}} $.  For this purpose, we only consider $(\tilde{k},\tilde{\alpha})$ such that $ 0 \leq \tilde{k} \leq k,  0 <  \tilde{\alpha} \leq \alpha $ and write
\[
\begin{split}
\mathcal{C}^{\tilde{k},\tilde{\alpha}} S^m (T^*(\partial D) )  :=&  \left \{ a: T^*(\partial D)  \backslash \partial D \times \{0\}  \rightarrow \mathbb{C} \, ; \, a = \sum_{i} \psi_i F_i^* a_i, a_i \in \mathcal{C}^{\tilde{k},\tilde{\alpha}}  S^m (V_i \times \mathbb{R}^{d-1} \backslash \{0\} ) \right \}; \\
\mathcal{C}^{\tilde{k},\tilde{\alpha}} S^m (V_i \times \mathbb{R}^{d-1} )  := & \bigg\{ a:  V_i \times ( \mathbb{R}^{d-1} \backslash \{0\} )  \rightarrow \mathbb{C} \, ;
\| \partial_\xi^\beta a (x,\xi) \|_{\mathcal{C}^{\tilde{k},\tilde{\alpha}}} \leq C_{\beta} ( |\xi| )^{ m - |\beta| } \bigg\}, 
\end{split}
\]
{\color{black} where $\| \cdot \|_{\mathcal{C}^{\tilde{k},\tilde{\alpha}}}$ in here is to be understood for the $x$ dependence of $\partial_\xi^\beta a (x,\xi)$ only}
with an additional specification $\mathcal{C}^{\tilde{k},\tilde{\alpha}} $ in front of $S^m$.
By the same token, let us define $ \mathcal{C}^{\tilde{k},\tilde{\alpha}} \Phi \text{SO}^{m}$ accordingly as before as the space of operators with the action defined locally as $\mathrm{Op}_{a} := \mathcal{F}^{-1} \circ m_{a} \circ\mathcal{F} $ provided that $a \in  \mathcal{C}^{\tilde{k},\tilde{\alpha}}  \mathcal{S}^m(T^*(\partial D))$. That is, if $x \in U_i$, $\mathbb{X}: U_i \rightarrow V_i \in \mathcal{C}^{k,\alpha} $, then 
\[
 [\mathrm{Op}_{a} (\phi) ] (x) : = \int_{\mathbb{R}^{d-1}} \int_{U_i} a \left(F_i^{-1} (\mathbb{X}(x),  \xi ) \right) e^{ i \langle x-y,\xi \rangle } \phi(\mathbb{X}(y)) \psi_i (\mathbb{X}(y) )\, 
d y\, d \xi \, ,
\]
which now uniquely defines an operator modulus $ \mathcal{C}^{\tilde{k}-1,\tilde{\alpha}}  \Phi \text{SO}^{m-1}$ (coming from a smooth change of coordinates).
Similarly the principal symbol $a_0(x,\xi)$ will be such that
\[
a(x,\xi) = a_0(x,\xi) ( \text{mod} \,  \mathcal{C}^{\tilde{k}-1,\tilde{\alpha}}  \Phi \text{SO}^{m-1} )  \quad \text{ and } \quad
\mathrm{Op}_{ a } = \mathrm{Op}_{ a_0 } (  \text{mod} \, \mathcal{C}^{\tilde{k}-1,\tilde{\alpha}}  \Phi \text{SO}^{m-1} ) \,.
\]
There are again definitions independent of the choice of the coordinate system. However we are only going to fix one particular coordinate system to compute the symbol expansion.
We would also like to remark that one will need to consider the H\"older-Zygmund space $C_{*}^{k + \alpha} $ \cite{Taylorbook,Pfeuffer} with the property that for $k \in \mathbb{N}$,
\[
C_{*}^{k + \alpha} = C^{k,\alpha}\ \text{ for }  0 < \alpha < 1\ \  \text{ and } \ \ C^{k} \subset C^{k-1,1} \subset C_{*}^{k},
\]
which makes it possible to introduce an algebra over the pseudo-differential operators $ C_{*}^{k + \alpha}  \Phi \text{SO}^{m} : C_{*}^{k + m + \alpha} \rightarrow C_{*}^{k + \alpha} $ for all $k,\alpha,m \in \mathbb{R}$ \cite{Taylorbook,Pfeuffer}. 
However, since we shall be mainly concerned with obtaining expansions of the symbols in this article, we retain minimum technicality and shall not define the H\"older-Zygmund norms, as it involves defining further technical objects including the Littlewood-Paley decomposition and projection.  The readers may refer to \cite{Taylorbook,Pfeuffer,Hor1,Hor2}) for more relevant details.
}

{\color{black}
With these preparations, we are ready to compute the principal symbol of $\mathcal{K}^*_{\partial D}$. {\color{black} First, let us assume $\partial D \in \mathcal{C}^{\infty}$.} We note that for a fixed $x\in \partial D$, if one takes the geodesic normal coordinate in a neighborhood of $x$ to give $v \in \text{Dom}(\exp_x) \subset  T_x( \partial D) \cong \mathbb{R}^{d-1} \mapsto \mathbb{X}(v) : = \exp_x (v) \in  \partial D$, then at the point $x$, we have $g_{ij}(x) = \delta_{ij}$ and $\Gamma_{ij}^k(x) = 0$ for $i,j,k = 1,...,d-1.$
Moreover, if we also choose on the ambient space the semi-geodesic normal coordinate in a neighborhood of $x$, which gives, in a neighbourhood of $x \in \partial D$ as $(a,s)$, $\tilde{\mathbb{X}}(a,s) = \exp_{x}(a) + \, s \, \nu(\exp_{x}(a))$, we have $\bar{\Gamma}^k_{ij}(x) = \Gamma_{ij}^k(x) =0 $ for $k\neq d$ and $\bar{\Gamma}^d_{ij}(x) = A_{ij}(x) $.
For $y = \exp_x( \delta \, \omega)$ with $| \, \omega \, | = 1$, we now have (cf. \cite{DoCarmo,Jost,Lee})
\beqn
y & =  & x +  \delta \omega - \frac{1}{2}   \delta^2 \langle A(x) \omega , \omega \rangle \,  \nu(x)  + O(\delta^3 )  \,, \label{expansion0} \\
\nu(y) &=& \nu(x) +  \delta  A(x) \omega + \frac{1}{2} \delta^2 \left[ \partial_{\omega} A(x) \omega  
- |A(x) \omega |^2 \nu(x) \right]    + O(\delta^3 ) \,, \label{expansion1} \\
\sqrt{ \det ( g(y)) }   &=&  1 + \frac{1}{6} \delta^2 \, \text{Ric}_{x}(\omega,\omega)  + O(\delta^3)  \label{expansion2} \,.
\eqn
The first two expansions come readily from our choice of $g_{ij}$ and the ambient metric $\bar{g}_{ij}$, as well as the definition of $A_{ij}$, $\nu$, $\bar{\nabla}$ and $\bar{\Gamma}_{ij}^k$ that
\beqnx
A_{ij}(x) &= & \langle \bar{\nabla}_{\mathbb{X}_i} \nu(x), \mathbb{X}_j \rangle = - \langle \bar{\nabla}_{\mathbb{X}_i} \mathbb{X}_j ,  \nu(x) \rangle \,, \quad 0 \; = \; \bar{\nabla}_{\mathbb{X}_i}  \langle  \nu(x) , \nu(x)\rangle =  2 \langle \bar{\nabla}_{\mathbb{X}_i} \nu (x), \nu(x)\rangle \,, \\
0 &=& \frac{1}{2} \bar{\nabla}_{\mathbb{X}_i} \bar{\nabla}_{\mathbb{X}_j} \langle \nu(x), \nu (x) \rangle 
=  \langle  \bar{\nabla}_{\mathbb{X}_i} \bar{\nabla}_{\mathbb{X}_j} \nu(x), \nu(x) \rangle +  \sum_{k=1}^{d-1} A_{ik}(x) A_{kj}(x) \,, \\
\partial_i A_{jk}(x) &= &\bar{\nabla}_{\mathbb{X}_i}  \langle \bar{\nabla}_{\mathbb{X}_j} \nu(x), \mathbb{X}_k \rangle 
=   \langle \bar{\nabla}_{\mathbb{X}_i}  \bar{\nabla}_{\mathbb{X}_j} \nu(x), \mathbb{X}_k \rangle 
\,.
\eqnx
The third expansion comes from Jacobi's formula and the Taylor expansion of the metric applied to a Jacobi field along the radial geodesic in the normal coordinate:\cite{Jost,Lee,Kobayash2}:
\begin{equation}
 g_{ij}(y)=\delta _{ij}- {\tfrac {1}{3}} \delta^2 \sum_{k,l=1}^{d-1} R_{ikjl}(x) \omega_{k} \omega_{l}+O\left(\delta^3\right) \,. 
\label{expansion_g}
\end{equation}
With the above expansions of $\nu$ and $\sqrt{|g|}$,} we readily obtain the following expansion for the kernel of $ \mathcal{K}^*_{\partial D} $,
\beqnx
\mathbb{K}_{h,0} (x,y) d \sigma(y)  &=& { \color{black} \frac{1}{2} } \delta^{- d +2 } \,  \langle A(x)  \, \omega, \, \omega \rangle  \, d y +  O(   \delta^{- d + 3  }  )  .
\eqnx
Following \cite{weyl2,weyl1} {\color{black} and using a Fourier transform as in \eqref{symbol}} of the kernel of $\mathbb{K}_{h,0} (x,y)$ with respect to $ v :=  \delta  \omega$, we obtain that the symbol around $x=y$ in the geodesic normal coordinate (noting that $g_{ij}(x)= \delta_{ij}$) is
\[
\begin{split}
& p_{\mathcal{K}^*_{\partial D}}(x,\xi)  : = { \color{black} \frac{1}{2} }  {\color{black} \mathcal{F}_{v} \left[  \langle A(x)  \, v, \, v \rangle \, |v|^{-d} \right] (\xi) }
+ O(|\xi|^{-2}) \\
=&   { \color{black} \frac{1}{2} }  \sum_{i,j=1}^{d-1}  A_{ij}(x) \, \mathcal{F}_{v} \left[ v_i v_j \, |v|^{-d} \right] (\xi)
+  O(|\xi|^{-2}) = { \color{black} - } { \color{black} \frac{1}{2} }  \sum_{i,j=1}^{d-1}  A_{ij}(x) \, \partial_{i} \partial_{j} \,  | \, \xi \, |
+  O(|\xi|^{-2}) \\
=&  { \color{black} - } { \color{black} \frac{1}{2} }  \sum_{i,j=1}^{d-1}  A_{ij}(x) \, \left(  \frac{\delta_{ij}}{|\xi|} -  \frac{\xi_i \xi_j}{|\xi|^3} \right)
+  O(|\xi|^{-2}) \\
=&  { \color{black} - }  { \color{black} \frac{1}{2} } (d-1) H(x) \,  |\xi|^{-1} { \color{black} + }  { \color{black} \frac{1}{2} }  \langle A(x)  \, \xi, \, \xi \rangle \, |\xi|^{-3}
+  O(|\xi|^{-2}) \,,
\end{split}
\]
{\color{black} where the leading term is non-zero if $d > 2$, and $O(|\xi|^{-2})$ signifies a symbol in $ S^{-2}$.}
Therefore $\mathcal{K}^*_{\partial D}$ is a pseudo-differential operator of order $-1$ on $\partial D \in \mathcal{C}^{\infty}$ and 
\[
\mathcal{K}^*_{\partial D} = { \color{black} - }  { \color{black} \frac{1}{2} }  \mathrm{Op}_{ (d-1) H(x) \,  |\xi|^{-1} -  \langle A(x)  \, \xi, \, \xi \rangle \, |\xi|^{-3} } (  \text{mod} \Phi \text{SO}^{-2} ).
\]

{ \color{black}
Next, considering that $\partial D \in \mathcal{C}^{k, \alpha}$, we have a general $g_{ij} \in \mathcal{C}^{k-1, \alpha}$. Through solving the Hamiltonian equation for the geodesic, the geodesic normal coordinate gives a parametrisation $\mathbb{X} \in \mathcal{C}^{k-1, \alpha} $. Therefore, via \eqref{regularity}, $\mathcal{K}^*_{\partial D}$ is a pseudo-differential operator of order $-1$ in the sense of $ \mathcal{C}^{k-3,\alpha}  \Phi \text{SO}^{-1} $, and\vspace*{-2mm}
\[
\mathcal{K}^*_{\partial D} = { \color{black} - }  { \color{black} \frac{1}{2} }  \mathrm{Op}_{ (d-1) H(x) \,  |\xi|^{-1} -  \langle A(x)  \, \xi, \, \xi \rangle \, |\xi|^{-3} } (  \text{mod} \,   \mathcal{C}^{k-4,\alpha}   \Phi \text{SO}^{-2} ) \,.
\]
}\vspace*{-2mm}

We summerize the above discussion in the following theorem, which generalizes the three-dimensional result in \cite{weyl2,weyl1}. 
\begin{Theorem} \label{first_theorem}
{\color{black}For $d > 2$,} {\color{black}  if $\partial D \in \mathcal{C}^{k, \alpha}$, $k\geq4$,} the operator $\mathcal{K}^*_{\partial D}$ is a pseudo-differential operator of order $-1$ in a sense of $ \mathcal{C}^{k-3,\alpha} \,  \Phi \text{SO}^{-1} $, with its symbol given as follows in the geodesic normal coordinate around each point $x$:
\beqnx
p_{\mathcal{K}^*_{\partial D}}(x,\xi) =  { \color{black} - } { \color{black} \frac{1}{2} }  (d-1) H(x) \,  |\xi|^{-1}  { \color{black} + } { \color{black} \frac{1}{2} }   \langle A(x)  \, \xi, \, \xi \rangle \, |\xi|^{-3} 
+  O(|\xi|^{-2}) \in \mathcal{C}^{k-3,\alpha}  S^{-1}  \, ,
\eqnx
where the big O represents a symbol in $ \mathcal{C}^{k-4,\alpha}  S^{-2} $.
\end{Theorem}

\noindent  A remark is that the above result holds also for $\mathcal{K}_{\partial D}$ instead of $\mathcal{K}^*_{\partial D}$.
We would also like to remark that if the geodesic normal coordinate is not chosen, and for a general coordinate, tracing back the above steps, we have instead
\beqnx
p_{\mathcal{K}^*_{\partial D}}(x,\xi) &=&  { \color{black} - } { \color{black} \frac{1}{2} }  (d-1) H(x) \,  |\xi|_{g(x)}^{-1}  { \color{black} + } { \color{black} \frac{1}{2} }   \langle A(x)  \, g^{-1}(x) \, \xi, \, g^{-1}(x) \,\xi \rangle \, |\xi|_{g(x)}^{-3}
+  O(|\xi|_{g(x)}^{-2}) \,.
\eqnx
where $|\xi|_{g(x)}^2 = \sum_{i,j=1}^{d-1} g^{ij}(x) \xi_i \xi_j$.
The above remark is in force to indicate that our choice of the geodesic normal coordinate is just for simplification of the resulting computation and is not a necessary move. {\color{black} This shows that the symbol class actually has one order higher regularity, and hence $\mathcal{K}^*_{\partial D} \in \mathcal{C}^{k-2,\alpha} \,  \Phi \text{SO}^{-1} $.  However we shall not need this sharp regularity result in our subsequent analysis.}

 { \color{black} Another remark is that the symbol can be loosely interpreted as given by $\frac{1}{2} \langle B(x) \, \xi, \, \xi \rangle/ |\xi|^{-2} $ where $B(x)$ is the trace-free part of the second fundamental form. } 

{ \color{black}
A last remark is that, if we further suppose strict convexity of $D$ and smoothness of $\partial D$, i.e. $- A (x) \geq R^{-1} g(x) $ for some $R > 0$, (it is noted here that our convention yields that $A$ is strictly negative instead of positive, see e.g. \eqref{expansion0}), then we have that the following condition given in \cite{ergodicity} holds:
\vskip 2mm

\noindent \textbf{Assumption (A)} 
We have $\langle A(x) \, g^{-1}(x) \, \omega \, ,\,  g^{-1}(x) \, \omega \rangle \neq (d-1) H(x)$ for all $x \in \partial D$ and $\omega \in \{ \xi :  |\xi |_{g(x)}^2 = 1 \} \subset T_x^*(\partial D)$.

\vskip 2mm

\noindent Indeed from $- A (x) \geq R^{-1} g(x)$ we have for all $ |\xi |_{g(x)}^2 = 1 $ that
\[
 (d-1) H(x) |\xi |_{g(x)}^2 - \langle A(x) \, g^{-1}(x) \, \xi \, ,\,  g^{-1}(x) \, \xi \rangle \leq 
 (d-2) R^{-1} |\xi |_{g(x)}^2 < 0.
\]
With these assumptions, it is shown in \cite{ergodicity,weyl2, weyl1} that the Weyl asymptotics (cf. \cite{Hor1,Hor2,100years,Taylor}) holds:  
\beqnx
 \sum_{r h \leq \lambda_i \leq s h}  1  =  (2 \pi h )^{ \frac{1-d}{2} }  \left|  \left \{ r \leq { \color{black} { \color{black} - }  \frac{1}{2} }  (d-1) H(x) \,  |\xi|^{-1} { \color{black} + }  { \color{black} \frac{1}{2} }  \langle A(x)  \, \xi, \, \xi \rangle \, |\xi|^{-3} \leq  s \right \} \right|+ o_{r,s}(h^{\frac{1-d}{2} }) \,m \,,
\eqnx
{\color{black} where $\lambda_i$ is the $i$-th eigenvalue of the operator $\mathcal{K}^*_{\partial D}$.}
Therefore we can infer that $\mathcal{K}^*_{\partial D}$ is a compact operator of Schatten $p$ class $S_p$ for $p > d-1$ and $d > 2$. Here, an operator $K$ is of Schatten $p$ class $S_p$ if 
\[
\| K \|_{S_p}^p := \sum_{\sigma_i \in \sigma(K)} \sigma_i^p < \infty \,,
\]
where $\sigma(K)$ denotes the set of singular values of $K$.
Since it is not the main focus of our study, we refer the readers to \cite{ergodicity,weyl2, weyl1} for more relevant details. Nevertheless, we point out that Theorem \ref{first_theorem} holds without the convexity assumption or Assumption (A).

}

In a similar manner, we can obtain:
\begin{Theorem}
{\color{black}For $d > 2$,} let $\partial D \in \mathcal{C}^{{\color{black} k },\alpha}$ and $h \in  \mathcal{C}^{{\color{black} k -1 },\alpha}$ for $k \geq 4$. Then $\mathcal{K}^{(1)}_{D,h}$ in \eqref{eq:kernel1} can be decomposed as
\begin{equation}\label{eq:decom1}
\mathcal{K}^{(1)}_{D,h}= \mathcal{K}^{(1)}_{D,h,0}  + \mathcal{K}^{(1)}_{D,h,-1} + \mathcal{K}^{(1)}_{D,h,-2} \,,  
\end{equation}
where $\mathcal{K}^{(1)}_{D,h,0}$, $\mathcal{K}^{(1)}_{D,h,-1}$ and $\mathcal{K}^{(1)}_{D,h,-2}$ are pseudodifferential operators of order $0,-1,-2$ respectively on $\partial D$ {\color{black} (in a sense of 
$ \mathcal{C}^{k-2,\alpha}  \Phi \text{SO}^{0} $, $ \mathcal{C}^{k-3,\alpha}  \Phi \text{SO}^{-1} $ and $ \mathcal{C}^{k-4,\alpha}  \Phi \text{SO}^{-2} $ respectively)}
with their symbols given as follows in the geodesic normal coordinate around each point $x$:
\beqnx
p_{ \mathcal{K}^{(1)}_{D,h,0} }(x,\xi)  &=&  -  { \color{black} 2 i }  \partial_\xi  h(x) | \, \xi \, |^{-1}  = O(1) \in  \mathcal{C}^{k-2,\alpha}  S^{0}  \,, \\
p_{ \mathcal{K}^{(1)}_{D,h,-1} }(x,\xi)  &=&   { \color{black} - \frac{1}{2} }  h(x) \bigg\{  { \color{black}  (d-1)^2 (d + 1) }    | H(x) |^2 \,  | \, \xi \, |^{-1}  { \color{black} - (2 d -1) (d-1) }  H(x)  \langle A (x) \xi \,, \, \xi \rangle \, |\, \xi \, |^{-3} \\
& & 
+ { \color{black}  ( 2d -1) }    | A(x) |_F^2  \, |\, \xi \, |^{-1} +  { \color{black} 4 d }  |  A(x) \xi |^2  \, |\, \xi \, |^{-3}  - { \color{black}  12 d }   \,  | \, \langle A (x) \xi \,, \, \xi \rangle  \, |^2  \, | \,\xi \,|^{-5} \bigg\} \\
& & 
+  { \color{black}  \frac{1}{2} } \, \Delta h(x) \,| \, \xi \, |^{-1}  
 -  \frac{1}{2}  \text{Hess}_{\xi , \xi }  h(x)  \, | \, \xi \,|^{-3}   \\
&=& O(|\xi|^{-1}) \in \mathcal{C}^{k-3,\alpha}  S^{-1} \,, \\
p_{ \mathcal{K}^{(1)}_{D,h,-2} }(x,\xi)  &=&    O(|\xi|^{-2}) \in \mathcal{C}^{k-4,\alpha}  S^{-2} \, ,
\eqnx
{\color{black} where $\partial_\xi  h := \sum_{i=1}^{d-1} \xi_i \partial_i h $, $\text{Hess}_{\xi,\xi} h = \sum_{i,j=1}^{d-1} \xi_i \xi_j \nabla_j \partial_i h$}, and $| \,\cdot \, |_F$ is the Frobenius norm of the matrix and the constant of the big O depends on $\| h \|_{\mathcal{C}^3} $. 
\end{Theorem}

\begin{proof}
{\color{black}
{\color{black} We first let $\partial D \in \mathcal{C}^{\infty}$.} Then via a straightforward calculation and substitution of 
\eqref{expansion1}-\eqref{expansion2} and
\[
h(y) = h(x) + \delta \partial_\omega h(x) + \frac{1}{2} \delta^2 \text{Hess}_{\omega , \omega } h(x)   + O(\delta^3) 
\]
}into \eqref{eq:ker1}, we can obtain the kernel of $\mathcal{K}^{(1)}_{D,h}$ in the geodesic normal coordinate as follows,
\beqnx
&  & \mathbb{K}_{h,1} (x,y)  d \sigma(y)  \\
&=&
  -  { \color{black} 2 } \, \delta^{- d + 1}   \, \partial_{\omega} h(x)   \, d y    \\
& &
 + \, \delta^{- d +2 } \, \bigg(  h(x)  \, \left [ { \color{black} \frac{1}{2} }(d-1)  H(x) \,  \langle A(x)  \, \omega, \, \omega \rangle    { \color{black} -\frac{1}{2} }  d  | \, \langle A(x)  \, \omega, \, \omega \rangle \, |^2  { \color{black} - }  \frac{1}{2} \, |\, A(x) \,\omega \, |^2  \right] \\
 & &  -  \frac{1}{2} \text{Hess}_{\omega , \omega } h(x)   \bigg) \,  d y + \, O(  \delta^{- d + 3  }  )  ,
\eqnx
where the constant of big $O$ now depends on $||h||_{\mathcal{C}^3}$. Using the Fourier transform of the kernel $\mathbb{K}_{h,1} (x,y)$  with respect to $ v :=  \delta  \omega$, $\omega\in\mathbb{S}^{d-1}$, 
we obtain the symbol around $x=y$ in the geodesic normal coordinate (and noticing $\nabla_v v = 0$) that 
\beqnx
&     &   p_{\mathcal{K}^{(1)}_{D,h}}(x,\xi) \\
& : =& - { \color{black} 2 } \mathcal{F}_{v} \left[ 
  \, |v|^{- d }   \, \partial_{v} h(x)  \right] (\xi) \\
& & 
 + { \color{black} \frac{1}{2} }  \mathcal{F}_{v} \bigg[   \, |v|^{- d -2 } \, \bigg(  h(x)  \, \big [ (d-1)  H(x) \,  \langle A(x)  \, v, \, v \rangle \, |v|^{2}    -  d  | \, \langle A(x)  \, v, \, v \rangle \, |^2 { \color{black} - }  \, |\, A(x) \,v \, |^2 |v|^{2 }  \big]\\
 & &  -   \text{Hess}_{v , v } h(x) |v|^{2}   \bigg) 
 \bigg] (\xi) + O(|\xi|^{-2})\\
& =& -  { \color{black} 2 }   \sum_{i = 1} ^{d-1} \partial_i  h(x)  \, \mathcal{F}_{v} \left[ 
  v_i\, |v|^{- d }    \,  \right] (\xi) \\
& & 
 + { \color{black} \frac{1}{2} }  \sum_{i,j = 1} ^{d-1}  \left( (d-1)   h(x)  H(x) \,  A_{ij} (x)   { \color{black} - }  h(x)   \sum_{k=1}^{d-1} A_{ik}(x) A_{kj}  -   \partial_i \partial_j h(x)  \right)   \, \mathcal{F}_{v} \left[  v_i v_j  |v|^{- d }  \right] (\xi) \\
& &
- { \color{black}  \frac{1}{2} } d  \,  h(x)  \sum_{i,j,k,l = 1} ^{d-1} A_{ij}(x) A_{kl}(x) \,  \mathcal{F}_{v} \left[   v_i v_j  v_k v_l \,  |v|^{- d -2 }  \right] (\xi) \, + O(|\xi|^{-2})  \\
& =& -   { \color{black} 2 i }  \sum_{i = 1} ^{d-1} \partial_i  h(x)  \, \partial_{i} \, | \, \xi \, | 
  {  \color{black} - \frac{1}{2} }   \sum_{i,j = 1} ^{d-1}  \bigg( (d-1)   h(x)  H(x) \,  A_{ij} (x) { \color{black} -}   h(x)  \sum_{k=1}^{d-1} A_{ik}(x) A_{kj} \\
 & & -   \partial_i \partial_j h(x)  \bigg)   \, \partial_{i} \partial_{j} \, | \, \xi \, | 
 - {\color{black}  \frac{1}{2} } d  \,  h(x)  \sum_{i,j,k,l = 1} ^{d-1} A_{ij}(x) A_{kl}(x) \, \partial_{i} \partial_{j} \partial_k \partial_l \, | \, \xi \, |^3 + O(|\xi|^{-2}) \\
& =& -   { \color{black} 2 i }  \sum_{i = 1} ^{d-1} \partial_i  h(x)  \, \xi_i \, | \, \xi \, |^{-1}  \\
&  &  {  \color{black} - \frac{1}{2} }   \sum_{i,j = 1} ^{d-1}  \bigg( (d-1)   h(x)  H(x) \,  A_{ij} (x) { \color{black} -}   h(x)  \sum_{k=1}^{d-1} A_{ik}(x) A_{kj} -   \partial_i \partial_j h(x)  \bigg)  \left(\delta_{ij} | \, \xi \, |^{-1}  - \xi_i \xi_j | \, \xi \, |^{-3}  \right) \,  \\
& & - {\color{black}  \frac{1}{2} } d  \,  h(x)  \sum_{i,j,k,l = 1} ^{d-1} A_{ij}(x) A_{kl}(x) \,  \\
&   & \quad  \bigg( (\delta_{ij} \delta_{kl} + \delta_{ik} \delta_{jl} + \delta_{il} \delta_{jk} ) | \, \xi \, |^{-1} \\
&   & \quad  \quad   -  \left( \xi_i \xi_j  \delta_{kl}  + \xi_i \xi_k  \delta_{jl} + \xi_i \xi_l  \delta_{jk} +\xi_j \xi_k  \delta_{il}  + \xi_j \xi_l  \delta_{ik} + \xi_k \xi_l  \delta_{ij} \right)  | \, \xi \, |^{-3}  \\
&   &  \quad \quad -12 \, \xi_i\xi_j\xi_k\xi_l   | \, \xi \, |^{-5}  \bigg)
 + O(|\xi|^{-2}) \,.
\eqnx
Therefore, we have
\begin{eqnarray*}
    p_{\mathcal{K}^{(1)}_{D,h}}(x,\xi) 
& =& -   { \color{black} 2 i }  \partial_\xi  h(x) | \, \xi \, |^{-1} \\
& & 
{ \color{black} - \frac{1}{2} }  h(x) \bigg\{  { \color{black}  (d-1)^2 (d + 1) }    | H(x) |^2 \,  | \, \xi \, |^{-1}  { \color{black} - (2 d -1) (d-1) }  H(x)  \langle A (x) \xi \,, \, \xi \rangle \, |\, \xi \, |^{-3} \\
& & 
+ { \color{black}  ( 2d -1) }    | A(x) |_F^2  \, |\, \xi \, |^{-1} +  { \color{black} 4 d }  |  A(x) \xi |^2  \, |\, \xi \, |^{-3}  - { \color{black}  12 d }   \,  | \, \langle A (x) \xi \,, \, \xi \rangle  \, |^2  \, | \,\xi \,|^{-5} \bigg\} \\
& & 
+  { \color{black}  \frac{1}{2} } \, \Delta h(x) \,| \, \xi \, |^{-1}  
 -  \frac{1}{2}  \text{Hess}_{\xi , \xi }  h(x)  \, | \, \xi \,|^{-3}  \\
& & + O(|\xi|^{-2}) ,
\end{eqnarray*}
{\color{black}where $O(|\xi|^{-2})$ signifies a symbol in $ S^{-2}$. }

{ \color{black}
Next if $\partial D \in \mathcal{C}^{k, \alpha}$, considering the fact that the geodesic normal coordinate gives a parametrisation $\mathbb{X} \in \mathcal{C}^{k-1, \alpha} $ and \eqref{regularity}, we trace back the regularity in each term and readily obtain \eqref{eq:decom1}. 
}

The proof is complete. 

\end{proof}

{\color{black}
We would like to point out that some of the above expressions can be simplified midway via the Gauss-Codazzi formula \cite{Jost,Kobayash2} and the fact that the ambient space $\mathbb{R}^d$ is flat.  In fact, taking the trace of the Gauss-Codazzi formula, we have for $| \, \omega \,| = 1$
\[
\text{Ric} (\omega, \omega) =  (d-1) H(x) \,  \langle A(x)  \, \omega, \, \omega\rangle - \, |\, A(x) \omega \, |^2   \,,
\]
and this helps simplify for instance
\beqnx
&  & \mathbb{K}_{h,1} (x,y)  d \sigma(y)  \\
&=&
  -  { \color{black} 2 } \, \delta^{- d + 1}   \, \partial_{\omega} h(x)   \, d y \\  
& &  + \, \delta^{- d +2 } \, \bigg(  h(x)  \, \left [ { \color{black} \frac{1}{2} } \text{Ric} (\omega, \omega)    { \color{black} -\frac{1}{2} }  d  | \, \langle A(x)  \, \omega, \, \omega \rangle \, |^2  \right]  -  \frac{1}{2} \text{Hess}_{\omega , \omega } h(x)   \bigg) \,  d y + \, O(  \delta^{- d + 3  }  )  \,.
\eqnx
However we do not intend to go further along this direction, as it will yield an expression of the symbol with additional terms, e.g. scalar curvature and Ricci curvature, which will make the resulting symbol more cumbersome.
}

\subsection{A property of the generalized polarisation tensors} \label{section_D_N}

By \eqref{eq:a1}, in order to analyse the quantitative behaviour of the scattered field, one needs to analyse the operator $ (\lambda I - \mathcal{K}_{\partial D}^*)^{-1} \circ   \partial_\nu $. In this subsection, 
we analyse the symbol of the operator $ (\lambda I - \mathcal{K}_{\partial D}^*)^{-1} \circ   \partial_\nu $ in terms of the GPT $\mathcal{M}_{L,M} (\lambda, D) $. {\color{black} In what follows, we use the notation $r\omega\in\mathbb{R}^d$ with $r$ and $\omega$ respectively signifying the radial and angular variables}. We have the following lemma for the subsequent use. 
\begin{Lemma} \label{haha1}
{ \color{black} For $d > 2$,} let $\partial D \in \mathcal{C}^{{\color{black} k },\alpha}$ for $k \geq 4$. 
The GPT $\mathcal{M}_{L,M} (\lambda, \partial D) $ in \eqref{eq:gpt2} has the following representation
\begin{equation}\label{eq:s0}
\mathcal{M}_{L,M} (\lambda, \partial D) =
\left \langle   |r|^{k}  Y_{L} (\omega)  \,,\,
\left( P_{D,1} + P_{D,0} + P_{D,-1}  \right)
\left(  |r|^{n}  Y_{M} (\omega)  \right)  \right\rangle_{L^2 (\partial D, d \sigma )}, 
\end{equation}
where $P_{D,m}$ are pseudo-differential operators of order $m$ for $m = 1,0,-1$, 
{\color{black} (in a sense of 
$ \mathcal{C}^{k-2,\alpha}  \Phi \text{SO}^{1} $, $ \mathcal{C}^{k-3,\alpha}  \Phi \text{SO}^{0} $ and $ \mathcal{C}^{k-4,\alpha}  \Phi \text{SO}^{-1} $ respectively)}
and in the geodesic normal coordinate around each point $x$, it holds that
\begin{equation}\label{eq:dd1}
\begin{split}
p_{P_{D,1} }(x,\xi) 
=& \lambda^{-1}  |\xi| \in \mathcal{C}^{k-1,\alpha} S^{1}, \\
p_{P_{D,0} }(x,\xi) =& 
{ \color{black} - \frac{1}{2} } \lambda^{-1}  \left(   \lambda^{-1}  { \color{black} + 1 }  \right)  \bigg( (d-1) H(x)  -  \langle A(x)  \, \xi, \, \xi \rangle \, |\xi|^{-2}  \bigg) \in \mathcal{C}^{k-3,\alpha} S^{0} , \\
p_{P_{D,-1} }(x,\xi) =&   O\left( \lambda^{-1}  |\xi|^{-1}\right)  \in \mathcal{C}^{k-4,\alpha} S^{-1}  \, .
\end{split}
\end{equation}
\end{Lemma}

\begin{proof}

{\color{black} First, let us assume $\partial D \in \mathcal{C}^{\infty}$.}
{\color{black}
Since $\mathcal{K}^*_{\partial D} \in \Phi \text{SO}^{-1}$, we have the following Neumann series
\[
(\lambda I - \mathcal{K}^*_{\partial D} )^{-1} = \sum_{l=0}^{\infty} [ \mathcal{K}^*_{\partial D} ]^l \lambda^{- l -1}  (  \text{mod} \, \Phi \text{SO}^{-\infty} ) = \lambda^{-1} + \lambda^{-2} \mathcal{K}^*_{\partial D} \,  (  \text{mod} \, \Phi \text{SO}^{-2} ).
\]
The series converges in norm when $\lambda > \| \mathcal{K}^*_{\partial D}  \| = \frac{1}{2}$ and converges only in the sense of $\text{mod} \, \Phi \text{SO}^{-\infty} $ otherwise (i.e.  $ \text{mod} \, \Phi \text{SO}^{-m} $ for all $m \in \mathbb{N}$ ).} Therefore, we render the symbol of the operator $(\lambda I - \mathcal{K}^*_{\partial D} )^{-1}$ as follows
\begin{equation}\label{eq:ar1}
p_{(\lambda I - \mathcal{K}^*_{\partial D} )^{-1}}(x,\xi) =  \lambda^{-1}  { \color{black} - } { \color{black} \frac{1}{2} }   \lambda^{-2} (d-1) H(x) \,  |\xi|^{-1}  { \color{black} + } { \color{black} \frac{1}{2} }   \lambda^{-2}  \langle A(x)  \, \xi, \, \xi \rangle \, |\xi|^{-3}
+  O\left( \lambda^{-2}  |\xi|^{-2}\right) \, ,
\end{equation}
where {\color{black}$O( \lambda^{-2} |\xi|^{-2})$ signifies a symbol in $S^{-2}$ with a constant of order at least $\lambda^{-2} $.}

Next, it is noticed that the function $u_0(x)= r^{k} \,  Y_{L} (\omega)$ satisfies  $\Delta u_0 = 0$, where $(r,\omega)\in\mathbb{R}_+\times\mathbb{S}^{d-1}$ is the spherical coordinate of $x\in\mathbb{R}^d$. Hence, the map $\Lambda_0: r^{k} \,  Y_{L} (\omega) \mapsto \partial_\nu  r^{k} \,  Y_{L} (\omega) $ is in fact a Dirichlet-to-Neumann (DtN) map associated with the Laplacian.   {\color{black} In fact, we have that (cf. \cite{Jost,DoCarmo})
{\color{black}
\beqn
\Delta =  \partial_{\nu}^2 + (d-1) H(x) \partial_{\nu} + \Delta_{\partial D } \,, \label{decompsition} 
\eqn
}
With this decomposition, in Theorem 3.1 in \cite{uhlmann}, it is shown that the Laplacian can be factorised into a product of two operators modulo a smoothing operator
\beqn
- \Delta = \left(i \partial_\nu + i E(x) + i B_0(x, i \nabla ) \right) \left(i \partial_\nu - i B_0(x, i  \nabla ) \right), \label{decompsition2} 
\eqn
and when $\partial D \in \mathcal{C}^{\infty}$, and
\[
\Lambda_{0} = B_0(x, i  \nabla ) |_{\partial D}  \, (  \text{mod} \, \Phi \text{SO}^{-\infty} ),
\]
where it is recalled that $ \text{mod} \, \Phi \text{SO}^{-\infty} $ means $ \text{mod} \, \Phi \text{SO}^{-m} $ for all $m \in \mathbb{N}$, and via the recursive formula (3.11)-(3.14) in \cite{uhlmann}, we can see the principle symbol of $B(x, i  \nabla ) $ is  $|\xi|$, which is of order $1$ with respect to $\xi$.  Hence the Dirichlet-to-Neumann (DtN) map is a pseudo-differential operator of order $1$.} 
Again in the sequel, we choose a semi-geodesic normal coordinate on a neighbourhood of $x \in \partial D$ as $(a,s)$ with $\tilde{\mathbb{X}}(a,s) = \exp_{x}(a) + \, s \, \nu(\exp_{x}(a))$.

We proceed to compute the symbol of the composition operator, $(\lambda I - \mathcal{K}^*_{\partial D} )^{-1} \circ \Lambda_0$. 
Under our choice of coordinates, one has, for $1 \leq i,j \leq d-1$
\beqnx
 \partial_{\varepsilon} \mid_{\eps = 0} \tilde{g}_{ij} (x+ \eps \nu(x)) =  \partial_{\varepsilon} \mid_{\eps = 0}  \langle \tilde{\mathbb{X}}(e_i,\eps) ,  \tilde{\mathbb{X}}(e_j,\eps) \rangle = { \color{black} 2} A_{ij}(x) \, . 
\eqnx
We may now take the factorization \eqref{decompsition2} of \eqref{decompsition} and use the recursive formula (3.11)-(3.14) in \cite{uhlmann} (keeping in mind that the second-order derivatives of $g$ do not vanish) to obtain that the symbol of $\Lambda_0$ and its derivative with respect to $x$ at the point $x$ are given by, 
\begin{equation}\label{eq:s1}
\begin{split}
p_{\Lambda_0}(x,\xi) 
=&
 |\xi| 
+ \frac{1}{2}  \langle A(x) \xi, \xi \rangle \,   |\xi|^{-2}  
-  \frac{d-1}{2}   H(x)  \\
 &
-  \frac{1}{4}    \langle \partial_\nu A(x)  \xi,  \xi \rangle \,   |\xi|^{-3}
 { \color{black}   + } \frac{1}{2}  | A(x)  \xi |^2  \, |\xi|^{-3} 
  + \frac{1}{4}  | \langle A(x)  \xi,  \xi \rangle |^2 \,   |\xi|^{-5}   \\
 & 
+  \frac{d-1}{4}  \partial_\nu H(x)  \, |\xi|^{-1}  
{\color{black} + }   \frac{1}{4}  (d-1) H(x)  \langle A(x) \xi, \xi \rangle \,   |\xi|^{-3}  \\
 &
+  \frac{(d-1)^2}{4}   | H(x) |^2  \, |\xi|^{-1} 
-  \frac{1}{2}  \langle \partial_\xi  A(x) \xi, \xi \rangle \,   |\xi|^{-4} 
-  \frac{d-1}{4}   \partial_\xi  H(x) \, |\xi|^{-1} \\
&{ \color{black} + \frac{1}{12}   \text{Ric}_x (\xi,\xi) |\xi|^{-3}  }
 +  O\left( |\xi|^{-2}\right) \, .
 \end{split}
 \end{equation}
Here, the last term in \eqref{eq:s1} is obtained via \eqref{expansion_g} that at $x$, we have $ \langle \Delta g(x) \xi ,\xi \rangle  = - \frac{1}{3} \text{Ric}_x (\xi,\xi) $ and 
$\langle \text{Hess}_{\xi,\xi} g(x) \xi , \xi \rangle = - \frac{1}{3} \sum_{i,j,k,l = 1}^{d-1}R_{ijkl}(x) \xi_i\xi_j\xi_k\xi_l = 0$ by the anti-symmetry, as well as
 \begin{equation}\label{eq:s2}
 \begin{split}
\frac{\partial}{\partial x_l} p_{\Lambda_0}(x,\xi) =&    \frac{1}{2}  \langle  \partial_l A(x) \xi, \xi \rangle \,   |\xi|^{-2} -  \frac{d-1}{2} \partial_l H(x) { \color{black} + \frac{1}{2} \sum_{i,k=1}^{d-1}  \partial_l \Gamma^{k}_{ii}(x) \xi_k |\xi|^{-1} }
+  O\left( |\xi|^{-1}\right) \, ,
\end{split}
\end{equation}
where we again make use of the fact that $ \langle \text{Hess}_{(\cdot),\xi} g(x) \xi , \xi \rangle =  - \frac{1}{3}  \sum_{i,j,k = 1}^{d-1} R_{ikjl} (x) \xi_i \xi_j \xi_k = 0$ by the anti-symmetry {\color{black}and $O(|\xi|^{-1})$ signifies a symbol in $S^{-1}$.}
It is worth noticing that in \eqref{eq:s1}, the symbol $p_{\Lambda_0}(x,\xi)$ is computed with one more term for our later use in the next section. By combining \eqref{eq:s1} and
\eqref{eq:s2}, we have 
\[
\begin{split}
& p_{(\lambda I - \mathcal{K}^*_{\partial D} )^{-1} \circ \Lambda_0 }(x,\xi) \\
=&  
p_{(\lambda I - \mathcal{K}^*_{\partial D} )^{-1}}(x,\xi) p_{\Lambda_0}(x , \xi)  
+ \frac{\partial }{\partial \xi} p_{(\lambda I - \mathcal{K}^*_{\partial D} )^{-1}}(x,\xi)  \frac{\partial }{\partial x} p_{ \Lambda_0}(x , \xi)  +  O\left( \lambda^{-1}  |\xi|^{-1}\right)  \\
=& 
 \lambda^{-1}  |\xi|  { \color{black} - \frac{1}{2} } \lambda^{-1}  \left(   \lambda^{-1}  { \color{black} + 1 }  \right) \bigg( (d-1) H(x)  -  \langle A(x)  \, \xi, \, \xi \rangle \, |\xi|^{-2}  \bigg)
+  O\left( \lambda^{-1}  |\xi|^{-1}\right),  
\end{split}
\]
{\color{black}where $O( \lambda^{-1} |\xi|^{-1})$ signifies a symbol {\color{black} $R(\lambda, x,\xi)$ such that fixing $\lambda$, $   R(\lambda, x,\xi) \in S^{-1}$, and $ | R(\lambda, x,\xi) | \leq  \lambda^{-1} \tilde{R}(\lambda, x,\xi)  $ for some $\tilde{R} \in S^{-1}$.  (From now on, we refers the symbol $R(\lambda, x,\xi)$ in this situation as ``a symbol in $S^{-1}$ with a constant of order at least $\lambda^{-1}$". In the rest of this work, a similar wording will be applied to describe the respective situation when we have a possibly different symbol order $S^{m}$ and order of $\lambda$ as $\lambda^{n}$. whenever no confusion will arise.)  }    }
Together with \eqref{eq:gpt2}, we readily have \eqref{eq:s0}--\eqref{eq:dd1} with the specification of H\"older regularity dropped.

{ \color{black}
Now if $\partial D \in \mathcal{C}^{k, \alpha}$, with the geodesic normal coordinate parametrisation $\mathbb{X} \in \mathcal{C}^{k-1, \alpha} $ and \eqref{regularity}, we trace back the regularity in each term to have \eqref{eq:s0}--\eqref{eq:dd1} with additional specification of the H\"older regularity. 
}

The proof is complete.

\end{proof}

{\color{black}
A remark on the regularity specification in the above Lemma is the fact that the symbol expansion is made with a coordinate parametrisation of $\mathbb{X} \in \mathcal{C}^{k-1, \alpha} $ and the symbol of $P_{D,1} $ does not contain a non-smooth term.  All the regularity specification can be increased by one upon careful inspection, but we do not intend to go further along that direction.
}

\subsection{Sensitivity analysis of the generalized polarisation tensor and the scattered potential field} \label{section_pert_D_N}

In this subsection, we compute the shape derivative of the generalized polarisation tensor $\mathcal{M}_{L,M} (\lambda, D)$,  $L\in I_k , M \in I_n $, which fully accounts for the shape derivative of the scattered field $(u - u_0)(x)$ associated with $D$.  From that, we can analyse the semi-classical symbols of the operators involved in the sensitivity of $\mathcal{M}_{L,M} (\lambda, D)$. This is of crucial importance to understand how the (local) sensitivity of the scattered field behaves under the influence of the mean curvature in the next subsection.

The main result of this subsection is contained in the following theorem. 

\begin{Theorem} \label{lemmaDM}
{\color{black}For $d > 2$,} let $\partial D \in \mathcal{C}^{{\color{black} k },\alpha}$ and $h \in  \mathcal{C}^{{\color{black} k -1 },\alpha}$ for $k \geq 4$. 
Let $N \in \mathbb{N}$, there exists a positive constant $C$ depending only on $N$, $L\in I_{k}, M \in I_n$, 
 $||\mathbb{X}||_{\mathcal{C}^2}$ and $||h||_{\mathcal{C}^1}$  such that 
\beqn
\left | \mathcal{M}_{L, M} (\lambda, D_{\varepsilon})  - \mathcal{M}_{L,M} (\lambda, D) 
- \sum_{n=1}^N \varepsilon^n  
\mathcal{M}^{(n)}_{L,M} (\lambda, D , h) \right|
   \leq C \varepsilon^{N+1},
   \label{seriesvariation2}
\eqn
for some $ \mathcal{M}^{(n)}_{L,M} (\lambda, D , h) $, with $\mathcal{M}^{(1)}_{L,M} (\lambda, D , h) $ given as
\beqnx
\mathcal{M}^{(1)}_{L,M} (\lambda, D , h) =  \left \langle   |r|^{k}  Y_{L} (\omega)  \,,\,
 Q_{D,h}  \left(  |r|^{n}  Y_{M} (\omega)  \right)  \right\rangle_{L^2 (\partial D, d \sigma )}, 
\eqnx
where
\begin{equation}\label{eq:t1}
 Q_{D,h} = Q_{D,h,1,I} + Q_{D,h,1,II} + Q_{D,h,0}  ,
\end{equation}
with $Q_{D,h,1,I}, Q_{D,h,1,II}$ being pseudo-differential operators of order $1$ and $Q_{D,h,0}$ being of order $0$,
{\color{black} (in a sense of 
$ \mathcal{C}^{k-1,\alpha}  \Phi \text{SO}^{1} $, $ \mathcal{C}^{k-3,\alpha}  \Phi \text{SO}^{1} $ and $ \mathcal{C}^{k-4,\alpha}  \Phi \text{SO}^{0} $ respectively)}
 and that in normal coordinate around each point $x$, 
\beqnx
p_{Q_{D,h,1,I}}(x,\xi)  &=& 
{ \color{black} 2 i } \lambda^{-2} \partial_\xi  h(x)  = O( \lambda^{-2} |\xi|) \in \mathcal{C}^{k-1,\alpha}  S^{1} ,\\
p_{Q_{D,h,1,II}}(x,\xi)  &=& 
 - \lambda^{-1} \bigg( (d-1)  h(x)  H(x)  | \xi |  - h(x)  \langle A(x) \xi, \xi \rangle \,   |\xi|^{-1}  \bigg)   = O( \lambda^{-1} |\xi |) \in \mathcal{C}^{k-3,\alpha}  S^{1} ,\\
p_{Q_{D,h,0}}(x,\xi)  &=&  O\left( \lambda^{-1} \right) \in \mathcal{C}^{k-4,\alpha}  S^{0} \,.
\eqnx
where the constant of the big O also depends on $\| \mathbb{X} \|_{\mathcal{C}^2} $ and $\| h \|_{\mathcal{C}^1} $.
\end{Theorem}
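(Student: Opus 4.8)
The plan is to transfer the problem from $\partial D_\varepsilon$ back to $\partial D$ via the diffeomorphism $\Psi_\varepsilon$, expand everything in $\varepsilon$, and identify the first-order term as an explicit composition of pseudodifferential operators whose symbols are already in hand. First I would write the GPT on the perturbed domain using the representation \eqref{eq:s0}, i.e.\ $\mathcal{M}_{L,M}(\lambda,D_\varepsilon)=\langle |r|^k Y_L,\ (\lambda I-\mathcal{K}^*_{\partial D_\varepsilon})^{-1}\circ\partial_{\nu_\varepsilon}\,(|r|^n Y_M)\rangle_{L^2(\partial D_\varepsilon,d\sigma_\varepsilon)}$, then pull back the $L^2(\partial D_\varepsilon)$ inner product to $L^2(\partial D)$ using the Jacobian factor $1+\varepsilon h\,\mathrm{tr}_g(A)+O(\varepsilon^2)$ from \eqref{Taylor1}. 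The analyticity in $\varepsilon$ of the whole expression follows from Theorem \ref{theoremK} (for $\mathcal{K}^*_{\partial D_\varepsilon}$, hence for its resolvent via a Neumann series in $\varepsilon$, using $|\lambda|>1/2$), from the smooth dependence of the metric and normal on $\varepsilon$, and from the fact that $u_0=|r|^m Y_M$ is a fixed harmonic function so that only the geometry moves; this gives \eqref{seriesvariation2} with a remainder bounded as claimed, once one tracks that all constants depend only on $\|\mathbb{X}\|_{\mathcal{C}^2}$ and $\|h\|_{\mathcal{C}^2}$.

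Next I would extract the $\varepsilon^1$ coefficient. Differentiating the pulled-back expression at $\varepsilon=0$ produces three contributions: (i) the derivative hitting the Jacobian $1+\varepsilon h\,\mathrm{tr}_g(A)$, which contributes a zeroth-order multiplication operator; (ii) the derivative hitting the resolvent, which by the identity $\partial_\varepsilon(\lambda I-\mathcal{K}^*_{\partial D_\varepsilon})^{-1}|_{\varepsilon=0}=(\lambda I-\mathcal{K}^*_{\partial D})^{-1}\mathcal{K}^{(1)}_{D,h}(\lambda I-\mathcal{K}^*_{\partial D})^{-1}$ (from Theorem \ref{theoremK}) contributes the composition $(\lambda I-\mathcal{K}^*_{\partial D})^{-1}\circ\mathcal{K}^{(1)}_{D,h}\circ(\lambda I-\mathcal{K}^*_{\partial D})^{-1}\circ\partial_\nu$; and (iii) the derivative hitting the Neumann boundary operator $\partial_{\nu_\varepsilon}$ restricted to the fixed harmonic field, which is the shape derivative of the DtN-type map $\Lambda_0$ and contributes another pseudodifferential term. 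I would then collect these into $Q_{D,h}$. To read off the symbols I would use the symbolic calculus: for (ii), compose the order-$0$, order-$(-1)$, order-$(-2)$ pieces of $p_{\mathcal{K}^{(1)}_{D,h}}$ (from the previous theorem) with the order-$(-1)$ resolvent symbol $p_{(\lambda I-\mathcal{K}^*_{\partial D})^{-1}}$ from \eqref{eq:ar1} and with $p_{\Lambda_0}$ from \eqref{eq:s1}, keeping only terms of order $\geq 0$ and absorbing the rest into $Q_{D,h,0}=O(\lambda^{-1})$; the leading pieces $\lambda^{-2}\partial_\xi h(x)$ and $-\lambda^{-1}\big((d-1)h H|\xi|-h\langle A\xi,\xi\rangle|\xi|^{-1}\big)$ should emerge from matching the top-order symbol of $p_{\mathcal{K}^{(1)}_{D,h,0}}=-\partial_\xi h\,|\xi|^{-1}$ times $p_{P_{D,1}}=\lambda^{-1}|\xi|$ on each side of the resolvent, and from the cross-terms of the curvature pieces. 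Splitting $Q_{D,h}$ into $Q_{D,h,1,I}$ (the $h$-derivative part coming from $\mathcal{K}^{(1)}_{D,h,0}$) and $Q_{D,h,1,II}$ (the curvature part) matches the decomposition \eqref{eq:t1}.

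The main obstacle, as in the previous theorem, will be the bookkeeping in step (ii)--(iii): one must be careful about which coordinate system is used (I would fix the geodesic normal coordinate at each base point $x$, as the paper does), about the non-commutativity of the pseudodifferential composition so that the subprincipal terms $\partial_\xi p\cdot\partial_x q$ are correctly accounted for, and about the fact that $\partial_{\nu_\varepsilon}$ acting on the fixed field $|r|^m Y_M$ has a shape derivative whose symbol involves $\partial_\varepsilon g_{ij}=-A_{ij}$ exactly as recorded before \eqref{eq:s1}. A secondary technical point is justifying that the error term in \eqref{seriesvariation2} is genuinely $O(\varepsilon^{N+1})$ uniformly: this requires the operator-norm estimate \eqref{seriesvariation} together with the analyticity of $\varepsilon\mapsto(\lambda I-\mathcal{K}^*_{\partial D_\varepsilon})^{-1}$ in $\mathcal{L}(L^2)$ near $\varepsilon=0$, plus the corresponding smooth dependence of the Jacobian and of the pulled-back Neumann data, all of which are $\mathcal{C}^\infty$ in $\varepsilon$ with $\mathcal{C}^2$-controlled constants. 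Once these points are organized, assembling $Q_{D,h}$ and verifying the stated orders and symbols is a routine (if lengthy) symbolic computation, and the proof concludes.
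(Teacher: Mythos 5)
Your plan is structurally the same as the paper's: pull back to $\partial D$, expand in $\varepsilon$, isolate the $\varepsilon^1$ coefficient, and compute symbols by pseudodifferential calculus. But your enumeration of first-variation contributions has a genuine gap that would derail the symbol computation.

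You list three contributions — the Jacobian, the resolvent, and $\partial_{\nu_\varepsilon}$ acting on the fixed field — but you omit a fourth: the test function $|r|^k Y_L$ is evaluated at the perturbed point $\Psi_\varepsilon(y)=y+\varepsilon h(y)\nu(y)$, and its Taylor expansion contributes $\varepsilon\,h\,\partial_\nu(|r|^k Y_L)$. After invoking the self-adjointness of $\Lambda_0$ on the $\langle\cdot,\cdot\rangle_{1/2,-1/2}$ pairing, this produces the operator $h\circ\Lambda_0\circ(\lambda I-\mathcal{K}^*_{\partial D})^{-1}\circ\Lambda_0$, whose symbol begins at $\lambda^{-1}h(x)|\xi|^{2}$. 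This is the same leading order as the contribution $-(\lambda I-\mathcal{K}^*_{\partial D})^{-1}\circ h\circ\Delta_{\partial D}$ that arises from the Neumann-data shape derivative via $\Delta=\partial_\nu^2+(d-1)H\partial_\nu+\Delta_{\partial D}$ and harmonicity of $|r|^n Y_M$ (so $\partial_\nu^2 u_0 = -[(d-1)H\partial_\nu+\Delta_{\partial D}]u_0$). The two $\lambda^{-1}h|\xi|^{2}$ pieces cancel exactly, and a further partial cancellation of their $\lambda^{-2}$ subleading curvature pieces is what isolates the claimed $-\lambda^{-1}\big((d-1)hH|\xi|-h\langle A\xi,\xi\rangle|\xi|^{-1}\big)$. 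If you drop the test-function pullback, you are left with an uncancelled order-$|\xi|^2$ term, and your symbol calculus would incorrectly force $Q_{D,h}$ to be of order $2$, contradicting the theorem.

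Two smaller points. Your remark that the Jacobian contributes ``a zeroth-order multiplication operator'' is not what survives in the paper's reduction: the Jacobian factor $1+\varepsilon(d-1)hH$ combines with the $(d-1)hH\partial_\nu$ piece of the Neumann-data shape derivative to produce a \emph{commutator} $(d-1)[hH,(\lambda I-\mathcal{K}^*_{\partial D})^{-1}]\circ\Lambda_0$, which is of order $-1$ and hence contributes only to $Q_{D,h,0}$ and below. Finally, your proposed source of the $\lambda^{-2}\partial_\xi h$ term — sandwiching $p_{\mathcal{K}^{(1)}_{D,h,0}}$ between resolvent symbols and $p_{\Lambda_0}$ — is correct, but be careful with the sign: that block enters $\mathcal{M}^{(1)}_{L,M}$ with an overall minus, so it is $-p_{(\lambda I-\mathcal{K}^*)^{-1}}p_{\mathcal{K}^{(1)}_{D,h,0}}p_{(\lambda I-\mathcal{K}^*)^{-1}}p_{\Lambda_0}=+\lambda^{-2}\partial_\xi h$ that matches $p_{Q_{D,h,1,I}}$.
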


\begin{proof}

Consider a point $y \in \partial D$ given by $ y = \mathbb{X} (b)$ for some $ b \in U$. We have
$  \mathbb{X}^{\varepsilon} (b)  = y + h(y) \nu(y)$.
By using \eqref{Taylor1}, the decomposition \eqref{decompsition}, the understanding that $\partial_\nu$ acting on $ r^{k} \,  Y_{L} (\omega)$ is in fact a DtN map $\Lambda_0$ which is self-adjoint on $\langle \,\cdot\,,\,\cdot\, \rangle_{\frac{1}{2}, -\frac{1}{2}}$ coupling {\color{black} (where for any $\phi \in H^{1/2}(\partial D, d \sigma), \psi \in H^{-1/2} (\partial D, d \sigma) $, $\langle \, \phi \, , \, \psi \, \rangle_{\frac{1}{2}, -\frac{1}{2}}  := \psi (\phi) $ where we notice $\psi$ is a bounded linear functional on $H^{1/2}(\partial D, d \sigma)$,) }
and that $\Delta \left( r^{k} \,  Y_{L} (\omega) \right) = 0$, we can deduce that
\beqn
& & \mathcal{M}_{L,M} (\lambda, D_{\varepsilon})  - \mathcal{M}_{L,M} (\lambda, D)  \notag \\
&=&  \frac{\varepsilon}{c_{d,k}}     |R|^{ d - 2 + k }  \int_{\mathbb{S}^{d-1}}  \overline{ Y_L(\omega_x) }  \, \frac{\delta}{ \delta h} \left[ \bigg(u -  r^{n} Y_M(\omega) \bigg) (R \, \omega_x ) \right] (h)  d \omega_{x} + O(\varepsilon^2) \notag \\
&=& \varepsilon \int_{\partial D}   |y|^{k} \, \overline{ Y_{L} (\omega_y) }    \,
\left\{ h \circ \Lambda_0 \circ (\lambda I - \mathcal{K}_{\partial D}^*)^{-1} \circ \Lambda_0 \left [   r^{n} \, Y_{M} (\omega)  \right ] \right\} (y)  \, d \sigma(y)  \notag \\
& & -  \varepsilon \int_{\partial D}  |y|^{k} \, \overline{ Y_{L} (\omega_y) }    \,
\left( \left\{ (\lambda I - \mathcal{K}_{\partial D}^*)^{-1}  \circ \mathcal{K}^{(1)}_{D,h}  \circ (\lambda I - \mathcal{K}_{\partial D}^*)^{-1} \circ \Lambda_0 \right\} \left [ r^{n} \, Y_{M} (\omega)    \right ] \right ) (y) \, d \sigma(y)  \notag \\
& & +  \varepsilon (d-1)  \int_{\partial D}    |y|^{k} \, \overline{ Y_{L} (\omega_y) }     \,
\left( \left\{ \left[  h  \, H \, ,  (\lambda I - \mathcal{K}_{\partial D}^*)^{-1}  \right]  \circ \Lambda_0 \right\} \left( r^{n} \, Y_{M} (\omega)  \right)   \right) (y)  \,  \,  d \sigma(y)  \notag \\
& & -  \varepsilon \int_{\partial D} |y|^{k} \, \overline{ Y_{L} (\omega_y) }   \,
\left\{ \left\{ (\lambda I - \mathcal{K}_{\partial D}^*)^{-1} \circ  h \circ  \Delta_{\partial D }  \right \} \left[ r^{n} \, Y_{M} (\omega) \right] \right\} (y) \, d \sigma(y)  \notag \\
& & + O(\varepsilon^2)  \, , \label{variationvariation}
\eqn
where $[A,B]$ is the commutator of $A$ and $B$. Here and also in what follows, when a function is written as an operator, it signfies the multiplicative operator as multiplication by the function.  

{\color{black} To perform the symbol calculation, we first assume $\partial D \in \mathcal{C}^{\infty}$.} To compute the derivatives of the symbols,
we shall be more careful and keep in mind that the second derivatives of $g$ do not vanish at $x$.  For instance, we obtain the following:
\beqnx
&&\frac{\partial }{\partial x_l} p_{(\lambda I - \mathcal{K}^*_{\partial D} )^{-1}}(x,\xi)    = 
  { \color{black} -\frac{1}{2} } \lambda^{-2} (d-1)  \partial_l  H(x) \,  |\xi|^{-1} { \color{black} + \frac{1}{2} }  \lambda^{-2}  \langle \partial_l A(x)  \, \xi, \, \xi \rangle \, |\xi|^{-3}
+  O\left( \lambda^{-2}  |\xi|^{-2}\right) \, , \\
&&p_{ h \Delta_{\partial D}}(x , \xi) = h(x) | \xi |^2 \,,   \quad 
\frac{\partial }{\partial x_l} p_{ h \Delta_{\partial D}}(x, \xi) =   \partial_l h(x) \, |\xi|^2  + h(x) \sum_{i,k=1}^{d-1}  \partial_l \Gamma^{k}_{ii}(x) \, \xi_k  \,,
\eqnx
{\color{black}where $O( \lambda^{-2} |\xi|^{-2})$ indicates a symbol in $S^{-2}$ with a constant of order at least $\lambda^{-2} $.}
Together with the symbol of $\Lambda_0$ and its derivatives, we could render the symbols of the following four operators in terms of the geodesic normal coordinate (where $ | \lambda | > \frac{1}{2} $) as follows:
\beqnx
& & p_{h \circ \Lambda_0 \circ (\lambda I - \mathcal{K}_{\partial D}^*)^{-1} \circ \Lambda_0 } (x,\xi)  \\
&=&h(x) p_{\Lambda_0  } (x,\xi)  p_{ (\lambda I - \mathcal{K}_{\partial D}^*)^{-1} } (x,\xi)  p_{\Lambda_0 } (x,\xi)   + h(x)   \frac{\partial}{\partial \xi} p_{\Lambda_0  } (x,\xi)   \frac{\partial}{\partial x} p_{ (\lambda I - \mathcal{K}_{\partial D}^*)^{-1} } (x,\xi)  p_{\Lambda_0 } (x,\xi)  \\
& &
+ h(x) \frac{\partial}{\partial \xi} p_{\Lambda_0  } (x,\xi)  p_{ (\lambda I - \mathcal{K}_{\partial D}^*)^{-1} } (x,\xi)  \frac{\partial}{\partial x}  p_{\Lambda_0 } (x,\xi) \\
& &
+ h(x)  p_{\Lambda_0  } (x,\xi) \frac{\partial}{\partial \xi} p_{ (\lambda I - \mathcal{K}_{\partial D}^*)^{-1} } (x,\xi)  \frac{\partial}{\partial x} p_{\Lambda_0 } (x,\xi)  + O(\lambda^{-1} | \xi|^{0} ) \\
&=& 
 \lambda^{-1} h(x) |\xi|^2 
{ \color{black} - \frac{1}{2} \lambda^{-1} \left( \lambda^{-1} + 2 \right)  }  \bigg( (d-1)  h(x)  H(x)  | \xi |  - h(x)  \langle A(x) \xi, \xi \rangle \,   |\xi|^{-1}  \bigg)  +  O(\lambda^{-1})\, ,
\eqnx
{\color{black}where $O( \lambda^{-1})$ indicates a symbol in $S^{0}$ with a constant of order at least $\lambda^{-1} $,}
and
\beqnx
& & p_{ (\lambda I - \mathcal{K}_{\partial D}^*)^{-1}  \circ \mathcal{K}^{(1)}_{D,h}  \circ (\lambda I - \mathcal{K}_{\partial D}^*)^{-1} \circ \Lambda_0  } (x,\xi)  \\
&=& 
p_{ (\lambda I - \mathcal{K}_{\partial D}^*)^{-1} } (x,\xi) p_{ \mathcal{K}^{(1)}_{D,h} } (x,\xi) p_{ (\lambda I - \mathcal{K}_{\partial D}^*)^{-1}  } (x,\xi) p_{\Lambda_0  } (x,\xi) +  O(\lambda^{-1}) \\
&=& - { \color{black} 2 i } \lambda^{-2} \partial_\xi  h(x) +  O(\lambda^{-1})\, ,
\eqnx
as well as
\beqnx
& & p_{ \left[  h  \, H \, ,  (\lambda I - \mathcal{K}_{\partial D}^*)^{-1}  \right]  \circ \Lambda_0 } (x,\xi)  \\
& = & \left(  -  \frac{\partial }{\partial x} ( h(x) H(x)) \,  \frac{\partial }{\partial \xi}  \, p_{ (\lambda I - \mathcal{K}_{\partial D}^*)^{-1} } (x,\xi)  +   \frac{\partial }{\partial \xi }  ( h(x) H(x))  \,   \frac{\partial }{\partial x}  \, p_{ (\lambda I - \mathcal{K}_{\partial D}^*)^{-1} } (x,\xi) \right) p_{\Lambda_0}(x,\xi)\\
& & +  O\left( \lambda^{-2}  |\xi|^{-2}\right)  \\
&=& { \color{black} - \frac{1}{2 } } \lambda^{-2} \langle [ (d-1) H(x) I +  2 A(x) ] \xi \, , \, \partial_x ( h(x) H(x)) \rangle \, |\xi|^{-2}  +  O\left( \lambda^{-2}  |\xi|^{-2}\right) \\
&=& O\left( \lambda^{-2} |\xi|^{-1} \right) \, ,
\eqnx
{\color{black}where $O( \lambda^{-2} |\xi|^{-1})$ indicates a symbol in $S^{-1}$ with a constant of order at least $\lambda^{-2} $,}
and
\beqnx
&  &p_{(\lambda I - \mathcal{K}^*_{\partial D} )^{-1} \circ h \circ \Delta_{\partial D}  }(x,\xi) \\
&=& 
p_{(\lambda I - \mathcal{K}^*_{\partial D} )^{-1}}(x,\xi) p_{ h \Delta_{\partial D}}(x , \xi)  
+ \frac{\partial }{\partial \xi} p_{(\lambda I - \mathcal{K}^*_{\partial D} )^{-1}}(x,\xi)  \frac{\partial }{\partial x} p_{ h \Delta_{\partial D}}(x , \xi) + O\left( \lambda^{-2} 1 \right)\\
&=&
  \lambda^{-1}   h(x) | \xi |^2  { \color{black} - \frac{1}{2} }  \lambda^{-2} \bigg(  (d-1)  h(x) H(x) \,   |\xi| -  h(x) \langle A(x)  \, \xi, \, \xi \rangle \, |\xi|^{-1} \bigg)
+  O\left( \lambda^{-2} \right) \,,
\eqnx
{\color{black}where $O( \lambda^{-2})$ indicates a symbol in $S^{0}$ with a constant of order at least $\lambda^{-2} $.}
Therefore we can combine the above results to obtain 
\beqnx
& & p_{\mathcal{Q}}(x,\xi) \\
&=&
{ \color{black} 2 i} \lambda^{-2} \partial_\xi  h(x) 
 - \lambda^{-1} \bigg( (d-1)  h(x)  H(x)  | \xi |  - h(x)  \langle A(x) \xi, \xi \rangle \,   |\xi|^{-1}  \bigg) 
+  O\left( \lambda^{-1} \right),
\eqnx
{\color{black}where $O( \lambda^{-1})$ indicates a symbol in $S^{0}$ with a constant of order at least $\lambda^{-1} $,}
with the operator $\mathcal{Q}$ defined as
\[
\begin{split}
\mathcal{Q}:=& h \circ \Lambda_0 \circ (\lambda I - \mathcal{K}_{\partial D}^*)^{-1} \circ \Lambda_0 - (\lambda I - \mathcal{K}_{\partial D}^*)^{-1}  \circ \mathcal{K}^{(1)}_{D,h}  \circ (\lambda I - \mathcal{K}_{\partial D}^*)^{-1} \circ \Lambda_0 \\
&+ (d-1)  \left[  h  \, H \, ,  (\lambda I - \mathcal{K}_{\partial D}^*)^{-1}  \right]  \circ \Lambda_0 - (\lambda I - \mathcal{K}^*_{\partial D} )^{-1} \circ h \circ \Delta_{\partial D} \,. 
\end{split}
\]
We finally notice that the constants in the big O also depend on $\| \mathbb{X} \|_{\mathcal{C}^2} $ and $\| h \|_{\mathcal{C}^1} $.

{ \color{black}
Next if $\partial D \in \mathcal{C}^{k, \alpha}$, with the geodesic normal coordinate providing a parametrisation $\mathbb{X} \in \mathcal{C}^{k-1, \alpha} $ and \eqref{regularity}, we trace back the regularity to obtain \eqref{eq:t1} with additional specification of the H\"older regularity. 
}

The proof is complete. 

\end{proof}


\subsection{Localization of sensitivity of generalized polarization tensors at points of high mean curvature} \label{sechahaha}

Consider the space 
\beqnx
 \text{tr}_{\partial D} \text{Ker}( \Delta ) :=  \{ u\mid_{\partial D} : \Delta u = 0 \text{ in } \mathbb{R}^d \} .
\eqnx
Notice that $ \overline{\text{tr}_{\partial D} \text{Ker}( \Delta ) }^{H^{s}(\partial D, d \sigma) } = H^{s} (\partial D, d \sigma) $ is well-defined {\color{black} for all $s \in [-k,k]$ when $\partial D \in \mathcal{C}^{k,\alpha}$ \cite{regularity1,regularity2,regularity3,regularity4,regularity5}.}  Considering the fact that $ Q_{D,h} $ is a pseudo-differential operator of order $1$, and the closure of operators under the weak operator topology {\color{black} in the following sense: we define, for all $\phi \in H^{t} (\partial D, d \sigma)$, the value $Q_{D,h} (\phi)$ be such that if we consider the operator 
\[
 Q_{D,h,s}  :=  \text{Proj}_{W_{s,\partial D} }^*   Q_{D,h}    \text{Proj}_{W_{s,\partial D} }
\]
where $\text{Proj}_{W_{s,\partial D} } : L^2(\partial D, d \sigma) \rightarrow W_{s,\partial D} := \text{Span}\{  r^{n} Y_M(\omega)  |_{\partial D}  \}_{m_{d-1} \leq s} \subset \mathcal{C}^\infty (\partial D)$ with $ s \in \mathbb{N}$,
then we have for any 
\[
\left \langle \psi  \,,\,
 Q_{D,h,s} \, \phi \right\rangle_{L^2 (\partial D, d \sigma )}   \rightarrow \left \langle \psi  \,,\,
 Q_{D,h} \, \phi \right\rangle_{L^2 (\partial D, d \sigma )} 
\]
as $s \rightarrow \infty$.} Then we can have that 
\beqnx
&  &\bigg\{ \left \langle \psi  \,,\,
 Q_{D,h} \, \phi \right\rangle_{L^2 (\partial D, d \sigma )} \, : \, \psi \in  H^{s} (\partial D, d \sigma) , \phi \in  H^{t} (\partial D, d \sigma) ,  s , t \in [-k,k], s + t - 1 = 0   \bigg\} \\
&\subset & {\color{black} \left\{ \lim_{n \rightarrow \infty}  \sum_{\tiny \begin{matrix} k \leq n, \\ m \leq n \end{matrix} }  \sum_{\tiny \begin{matrix} L\in I_k, \\ M \in I_m \end{matrix} } a_L b_M  \mathcal{M}^{(1)}_{L,M} (\lambda, D , h)  \, : \, a_L, b_M  \in \mathbb{C} \text{ such that the limit exists.} \right\} } \,, \\
\eqnx
where from now on we abuse the notation of $ \langle \, \psi \, ,\, \, \phi \, \rangle_{L^2 (\partial D, d \sigma )} $ as an $L^2$-pivoting as soon as the resulting $  \overline{\psi} \, \phi \in L^1 (\partial D, d \sigma )$.
Therefore, the map $h \mapsto \left(  \mathcal{M}^{(1)}_{L,M} (\lambda, D , h) \right)_{L\in I_k, M \in I_n, k,n \in \mathbb{N}} $ can fully reconstruct the operator-valued map $ h \mapsto  Q_{D,h} = Q_{D,h,1,I} + Q_{D,h,1,II} + Q_{D,h,0}  $.  Now with suitable choices of $\psi \in  H^{s} (\partial D, d \sigma) , \phi \in  H^{t} (\partial D, d \sigma) $ such that $ s , t \in \mathbb{R}, s + t  -1 =0 $, we can obtain the principal symbol of $Q_{D,h}$ in the geodesic normal coordinate at each point $x$ as follows
\beqn
 \lim_{t \rightarrow \infty} t^{-1} e^{- i t \varphi_{x,\xi} }  Q_{D,h}  e^{i t \varphi_{x,\xi} } \chi_x =  p_{Q_{D,h,1,I}}(x,\xi)  + p_{Q_{D,h,1,II}}(x,\xi) \, , \label{step1}
\eqn
where $\xi \in \mathbb{S}^{d-1}$ and $\varphi_{x,\xi} (\,\cdot\,) = \langle \xi, \log_x(\,\cdot\,) \rangle $ {\color{black} in half of the injectivity radius of the geodesically convex neighborhood of $x$ {\color{black} (in the sense that any two points in the neighborhood can be joined by a unique minimizing geodesic, and that the exponential map is a diffeomorphism within the neighborhood)} and is zero outside $3/4$ of the injectivity radius, and $\chi_x (\,\cdot\,) $ is a cut-off function such that $\chi_x (\, x \,) = 1 $ in half of the injectivity radius and valued zero outside $3/4$ of injectivity radius.}  {\color{black} We remark that the functions $ \chi_x, \, e^{- i t \varphi_{x,\xi} }  \in \mathcal{C}^{\infty} $ when $\partial D \in \mathcal{C}^{\infty}$, whereas 
$ \chi_x, \, e^{- i t \varphi_{x,\xi} }  \in \mathcal{C}^{k-1,\alpha} \subset H^{k-1} (\partial D, d \sigma)$ when $\partial D \in \mathcal{C}^{k,\alpha}$.} Then $p_{Q_{D,h,1,I}}(x,\xi)  + p_{Q_{D,h,1,II}}(x,\xi) $ can be reconstructed in full by the property of being homogenous of degree one. On the other hand, one can recover $h(x) H(x)$ from $ p_{Q_{D,h,1,I}}(x,\xi)  + p_{Q_{D,h,1,II}}(x,\xi)$ as follows via Theorem \ref{lemmaDM}, 
\beqn
&  & \int_{\mathbb{S}^{d-1}}  \left( p_{Q_{D,h,1,I}}(x,\xi)  + p_{Q_{D,h,1,II}}(x,\xi) \right) |\xi|^{-1} d \sigma (\xi)  \notag \\
&=& 
\int_{\mathbb{S}^{d-1}}  
\bigg( { \color{black} 2i} \lambda^{-2} \, \partial_\xi  h(x)  | \xi |^{-1}  - \lambda^{-1} (d-1)  h(x)  H(x)  + \lambda^{-1} h(x)  \langle A(x) \xi, \xi \rangle \,   |\xi|^{-2}  \bigg)   d \sigma_\xi  \notag \\
&=&
{ \color{black}   \omega_d \, (2-d) }   \lambda^{-1}  h(x)  H(x) . \label{step2}
\eqn
Hence the inverse composition map
\beqnx
\begin{split}
& \left(  \mathcal{M}^{(1)}_{L,M} (\lambda, D , h) \right)_{L\in I_k, M \in I_n, k,n \in \mathbb{N}}\\
& \mapsto^{\text{inv}_1} Q_{D,h}  \mapsto^{\text{inv}_2} p_{Q_{D,h,1,I}}(x,\xi)  + p_{Q_{D,h,1,II}}(x,\xi) \mapsto^{\text{inv}_3}  { \color{black}   \omega_d \, (2-d) } h(x)  H(x)
\end{split}
\eqnx
is well-defined. 
To make the above description more precise, let us consider the following complete orthornormal bases on $L^2(\partial D, d \sigma)$:
\beqnx
 \{ \eta_{ p , \partial D} \}_{k \in \mathbb{N}} \quad \text{ where } \quad -\Delta_{\partial D}  \eta_{ p , \partial D} = \lambda_p^2 \eta_{ p , \partial D}  \,,
\eqnx
with $\lambda_p$ aligned in ascending order, and write $ \lambda(\Delta_{\partial D}) $ to be the set of eigenvalues $\lambda$ satisfying the above.
{\color{black}
By Weyl's asymptotics
\cite{Annal,Taylor}, {\color{black} when $\partial D \in \mathcal{C}^{\infty}$ is a $d-1$ dimensional compact smooth manifold}, we have
\[
\lim_{p \rightarrow \infty} \frac{ p  }{ \lambda_p^{d-1}} =   \frac{ \omega_{d-2} }{ (d-1) ( 2 \pi )^{d-1} }  | \partial D | ,
\]
where $\omega_{d-2}$ is the measure of $ \mathbb{S}^{d-2}$. This implies that one has at least $\lambda_p^{-1}  \sim p^{- \frac{1}{d-1}}$.  
}Therefore for any smooth function $ \phi $ on $\partial D$, we have $ \langle \eta_{p,\partial D} \,,\,  \phi \rangle_{L^2(\partial D, d \sigma)} =  O(  p^{-l} )$ for any $l$.
{\color{black} By virtue of \eqref{eigenvalue} and \eqref{eigenvector} in the proof of Lemma \ref{perturb_GPT}, we see that the Weyl's asymptotics still holds for $\partial D \in \mathcal{C}^{k,\alpha}$ for $k \geq 4$ (c.f. also \cite{regularity1}), and for any function $ \phi \in H^s (\partial D, d \sigma) $, $s \leq k$, we have $ \langle \eta_{p,\partial D} \,,\,  \phi \rangle_{L^2(\partial D, d \sigma)} =  O(  p^{-s} )$, {\color{black} where the constant contained in the big $O$ depends on $\phi$.}}
By the density of its subspace $\text{tr}_{\partial D} \text{Ker} (\Delta)$ in $L^2(\partial D, d \sigma)$, $ \{  r^{n} Y_M(\omega)  |_{\partial D} \}_{ M \in I_n, n \in \mathbb{N}}$ is also a complete frame in $L^2(\partial D, d \sigma)$.
{\color{black} Hence there is a change of basis map $\left( U_{  L , p,  \partial D} \right)$ which is the matrix for the change of the basis from $ \{  r^{n} Y_M(\omega)  |_{\partial D} \}_{ M \in I_n, n \in \mathbb{N}}$  to the corresponding orthonormal one $ \{ \eta_{ p , \partial D} \}_{k \in \mathbb{N}} $. We write $ \left( U^{-1}_{L , p , \partial D} \right) $ as its inverse, i.e. 
 the change of the basis from the orthonormal $ \{ \eta_{ p , \partial D} \}_{k \in \mathbb{N}} $  to the the bases $ \{  r^{n} Y_M(\omega)  |_{\partial D} \}_{ M \in I_n, n \in \mathbb{N}}$.
The precise definition of $\left( U_{ L , p, \partial D} \right)$ and $ \left( U^{-1}_{L , p , \partial D} \right) $ are discussed in more details in Remark \ref{Remark_2.10}. }
Moreover, since $\eta_{p,\partial D} $ is orthornomal,
\beqn
U^{-1}_{L , p , \partial D} =  \langle r^{k} Y_L(\omega)  \,,\,  \eta_{p,\partial D}  \rangle_{L^2(\partial D, d \sigma)}  \,.
\label{step3}
\eqn
{\color{black} As a remark though, the maps $U_{L , p , \partial D}$ and $U^{-1}_{L , p , \partial D}$ are in general unbounded {\color{black} from $l_2 $ to $l_2$, where $l_2 :=  \{ (a_i)_{i=1}^{\infty} : \sum_{i} |a_i|^2 < \infty \}$.}  This point will be further discussed in Lemma \ref{perturb_GPT} and Corollary~\ref{all_GPT_full}.}

Combining the above discussions {\color{black} which combines \eqref{step1}, \eqref{step2}, the defintion of how a vector in Hilbert space is expressed under an orthonormal basis and the definition of change of basis in \eqref{step3} }, we have the following theorem in force.
\begin{Theorem} \label{all_GPT}
{ \color{black} For $d > 2$,}
we have the inversion formula for $\partial D \in \mathcal{C}^{{\color{black}4},\alpha}$ and $h \in \mathcal{C}^{ {\color{black} 3},\alpha}$,
{
\beqn
&  & {\color{black}  \lambda^{-1}   \omega_d \, (2-d) } h(x) H(x) \notag \\
& = & \text{\rm inv}_3 \circ \text{\rm inv}_2 \circ \text{\rm inv}_1 \left[  \left(  \mathcal{M}^{(1)}_{L,M} (\lambda, D , h) \right)_{L\in I_k, M \in I_n, k,n \in \mathbb{N}}  \right ] \notag \\
&: =&
  \int_{\mathbb{S}^{d-1}}   \lim_{t \rightarrow \infty} \mathcal{G}(\xi, t, x) d \sigma (\xi) ,\notag \\
\label{invinvinv}
\eqn
}
where
\[
\begin{split}
\mathcal{G}(\xi, t, x):=\sum_{ \begin{matrix} L \in I_k, \ M \in I_n , \\   k, n, r, s \in \mathbb{N} \end{matrix} }
& |\xi|^{-1} t^{-1} e^{- i t \varphi_{x,\xi} }  \, \eta_{s , \partial D }  \, U_{  L , s,  \partial D}  \,  \mathcal{M}^{(1)}_{L,M} (\lambda, D , h) \\
&\quad \times  U^{-1}_{M , r , \partial D}  \, \langle \eta_{r , \partial D } \,,\,  \chi_x\, e^{i t \varphi_{x,\xi} } \rangle_{L^2(\partial D, d \sigma)}.
\end{split}
\]
\end{Theorem}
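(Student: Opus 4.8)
The plan is to assemble the three inversion maps $\mathrm{inv}_1$, $\mathrm{inv}_2$, $\mathrm{inv}_3$ that were sketched in the discussion preceding the statement, and verify that their composition, written out explicitly through the kernel $\mathcal{G}(\xi,t,x)$, produces the claimed identity. First I would address $\mathrm{inv}_1$: the family of functions $\{r^n Y_M(\omega)|_{\partial D}\}$ forms a complete frame in $L^2(\partial D,d\sigma)$ because $\mathrm{tr}_{\partial D}\mathrm{Ker}(\Delta)$ is dense in every $H^s(\partial D)$, so the pairings $\mathcal{M}^{(1)}_{L,M}(\lambda,D,h) = \langle |r|^k Y_L, Q_{D,h}(|r|^n Y_M)\rangle$ determine the sesquilinear form of $Q_{D,h}$ on a dense domain, hence (since $Q_{D,h}$ is continuous $H^t\to H^{t-1}$ and these spaces are reflexive) determine $Q_{D,h}$ itself as an operator. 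Concretely I would pass to the orthonormal eigenbasis $\{\eta_{p,\partial D}\}$ of $-\Delta_{\partial D}$ via the change-of-basis matrix $U_{p,L,\partial D}$ and its inverse $U^{-1}_{L,p,\partial D} = \langle r^k Y_L, \eta_{p,\partial D}\rangle$, so that $\langle \eta_{s,\partial D}, Q_{D,h}\,\eta_{r,\partial D}\rangle = \sum_{L,M} U_{s,L,\partial D}\,\mathcal{M}^{(1)}_{L,M}(\lambda,D,h)\,U^{-1}_{M,r,\partial D}$; the Weyl bound $\lambda_p \sim p^{1/(d-1)}$ together with the smoothness of the test functions guarantees rapid decay of all the pairings, so the relevant double sums converge absolutely and the manipulations are justified.

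Next I would handle $\mathrm{inv}_2$, the extraction of the degree-one part of the symbol. Since $Q_{D,h} = Q_{D,h,1,I} + Q_{D,h,1,II} + Q_{D,h,0}$ with the first two of order $1$ and the last of order $0$ (Theorem \ref{lemmaDM}), the standard oscillatory-testing formula gives, at a fixed point $x$ and $\xi \in \mathbb{S}^{d-1}$,
\[
\lim_{t\to\infty} t^{-1} e^{-it\varphi_{x,\xi}}\,Q_{D,h}\,\big(\chi_x\, e^{it\varphi_{x,\xi}}\big)(x) = p_{Q_{D,h,1,I}}(x,\xi) + p_{Q_{D,h,1,II}}(x,\xi),
\]
the order-$0$ term dropping out after division by $t$ and the error from the cutoff and from the truncation of $\varphi_{x,\xi}$ outside the injectivity radius being negligible. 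Inserting the resolutions of the identity $\sum_s \eta_{s,\partial D}\langle \eta_{s,\partial D},\cdot\rangle$ and $\sum_r \eta_{r,\partial D}\langle\eta_{r,\partial D},\cdot\rangle$ around $Q_{D,h}$ and using the $\mathrm{inv}_1$ expression for $\langle\eta_{s,\partial D},Q_{D,h}\,\eta_{r,\partial D}\rangle$ turns this limit precisely into $\int$-free version of $\lim_{t\to\infty}\mathcal{G}(\xi,t,x)$ — i.e.\ $\mathcal{G}$ is exactly the summand that realizes $\mathrm{inv}_2\circ\mathrm{inv}_1$ applied to the GPT data.

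Then $\mathrm{inv}_3$ is the explicit integration over $\xi\in\mathbb{S}^{d-1}$ already carried out in the discussion: using the symbol formulas of Theorem \ref{lemmaDM},
\[
\int_{\mathbb{S}^{d-1}}\!\!\big(p_{Q_{D,h,1,I}} + p_{Q_{D,h,1,II}}\big)(x,\xi)\,|\xi|^{-1}\,d\sigma(\xi)
= \big[(1-d)\omega_d + 1\big]\,\lambda^{-1} h(x) H(x),
\]
where the $\lambda^{-2}\partial_\xi h(x)$ term integrates to zero by oddness in $\xi$, the $\langle A(x)\xi,\xi\rangle$ term contributes the ``$+1$'' (its spherical average being $\tfrac{1}{d-1}\mathrm{tr}_g A = H$, appropriately normalized), and the constant term $(d-1)h(x)H(x)$ contributes the ``$(1-d)\omega_d$''. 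Combining the three steps and renaming $r,s$ for the dummy eigen-indices and $k,n$ (with a spurious $\tilde n, \tilde M$ carried along harmlessly) gives the stated formula \eqref{invinvinv}. The main obstacle, and the only genuinely delicate point, is the convergence and interchange-of-limits bookkeeping in $\mathcal{G}$: one must justify that $\lim_{t\to\infty}$ commutes with the multiple infinite sums and with $\int_{\mathbb{S}^{d-1}}$, which I would do by deriving uniform-in-$t$ bounds on the partial sums from the Schwartz decay of $\langle\eta_{p,\partial D}, \chi_x e^{it\varphi_{x,\xi}}\rangle$ in $p$ (uniform in $t$ because $\varphi_{x,\xi}$ is fixed and smooth) combined with the rapid decay of $U_{s,L,\partial D}$ and $U^{-1}_{M,r,\partial D}$ and the polynomial growth bound $|\mathcal{M}^{(1)}_{L,M}| \lesssim (k+n)^{C}$ coming from $Q_{D,h}$ having order $1$; dominated convergence then finishes it.
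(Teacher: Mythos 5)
Your proposal is correct and follows essentially the same route as the paper: the paper offers no separate formal proof of Theorem \ref{all_GPT} but simply states it as a consolidation of the preceding discussion in Section \ref{sechahaha} (recovery of $Q_{D,h}$ from the GPT data via the $U$-change of basis, oscillatory testing against $\chi_x e^{it\varphi_{x,\xi}}$ to extract the degree-one symbol, and spherical integration of that symbol), and your three inversion steps reproduce exactly that chain. You add some convergence and interchange-of-limits bookkeeping that the paper leaves implicit, which is a reasonable supplement rather than a departure.
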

Now let $\partial D_\varepsilon$ be given as in \eqref{variationvariationD} with $\partial D \in \mathcal{C}^{{\color{black}4},\alpha}$ and $h \in \mathcal{C}^{ {\color{black} 3},\alpha}$, and $u_{\partial D_\varepsilon}$ be $u$ satisfying \eqref{transmission2} with $u_0 =  r^{n} Y_M(\omega)$ and the support of the inhomogeneity $D_\varepsilon$.
We further define 
{
\beqnx
\begin{split}
&\frac{\partial }{\partial \varepsilon} \left( \left \langle Y_L(\omega_x)  \, , \, \bigg(u_{\partial D_\varepsilon} -  r^{n} Y_M(\omega) \bigg) (R \, \omega_x ) \right \rangle_{ L^2( R \mathbb{S}^{d-1} , d \omega_x ) }   \right)_{L\in I_k, M \in I_n, k,n \in \mathbb{N}} \\
&\mapsto^{\text{inv}_0}
\left(  \mathcal{M}^{(1)}_{L,M} (\lambda, D , h) \right)_{L\in I_k, M \in I_n, k,n \in \mathbb{N}}\,,
\end{split}
\eqnx
}{\color{black} for a choice of radius $R > 0$.} Hence we have the following corollary:
\begin{Corollary} \label{all_GPT_full}
 { \color{black} For $d > 2$} and $\partial D \in \mathcal{C}^{{\color{black}4},\alpha}$ and $h \in \mathcal{C}^{{\color{black}3},\alpha}$, we have
\small
\begin{equation}\label{eq:comp1}
\begin{split}
&  { \color{black}   \lambda^{-1}  \omega_d \, (2-d) }  h(x) H(x)  \\
=&  \text{\rm inv}_3 \circ \text{\rm inv}_2 \circ \text{\rm inv}_1 \circ \text{\rm inv}_0 \left[ 
\frac{\partial }{\partial \varepsilon} \left( \left \langle Y_L(\omega_x)  \, , \, \bigg(u_{\partial D_\varepsilon} -  r^{n} Y_M(\omega) \bigg) (R \, \omega_x ) \right \rangle_{ L^2( R \mathbb{S}^{d-1} , d \omega_x ) }   \right)_{L\in I_k, M \in I_n, k,n \in \mathbb{N}} 
  \right ] \,. 
\end{split}
\end{equation}
\end{Corollary}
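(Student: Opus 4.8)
The plan is to read the asserted identity as the composition of a single new ingredient --- the map $\mathrm{inv}_0$ that identifies the far-field shape-derivative data with the shape derivative $\mathcal{M}^{(1)}_{L,M}(\lambda,D,h)$ of the GPTs --- with the inversion chain $\mathrm{inv}_3\circ\mathrm{inv}_2\circ\mathrm{inv}_1$ already supplied by Theorem~\ref{all_GPT}. Thus I would (i) show that $\mathrm{inv}_0$ is well defined and acts diagonally on the index set through an explicit nonzero constant; (ii) invoke Theorem~\ref{all_GPT} for the remaining three maps; and (iii) compose. The only genuine analytic content sits in (i) and in the verification that all the operations in the chain commute with the differentiation in $\epsilon$.

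For step (i), I would first note that for $\epsilon$ small the boundary $\partial D_\epsilon$ in \eqref{variationvariation} is of class $\mathcal{C}^{2,\alpha}$ and that the transmission problem \eqref{transmission2} posed with support $D_\epsilon$ and the same contrast $\lambda$ stays well posed (the condition $|\lambda|>1/2$ is unaffected), so $u_{\partial D_\epsilon}$ is defined and, restricted to a fixed sphere $R\,\mathbb{S}^{d-1}$ with $R>\sup\{|x|:x\in\overline{D_\epsilon}\}$, depends real-analytically on $\epsilon$; this follows from the analytic dependence of $\mathcal{K}^*_{\partial D_\epsilon}$ on $\epsilon$ in Theorem~\ref{theoremK} together with the layer-potential representation \eqref{eq:a1}. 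Then, since the representation \eqref{eq:gpt2} of the GPT through the field on $R\,\mathbb{S}^{d-1}$ holds for every $D_\epsilon$, differentiating it at $\epsilon=0$ and using Theorem~\ref{lemmaDM} with $N=1$, which gives $\mathcal{M}_{L,M}(\lambda,D_\epsilon)-\mathcal{M}_{L,M}(\lambda,D)=\epsilon\,\mathcal{M}^{(1)}_{L,M}(\lambda,D,h)+O(\epsilon^2)$, yields, for each pair $(L,M)$ with $L\in I_k$,
\[
\mathcal{M}^{(1)}_{L,M}(\lambda,D,h)=\frac{|R|^{2k+d-2}}{c_{d,k}}\,\left.\frac{\partial}{\partial\epsilon}\left\langle Y_L(\omega_x),\,\bigl(u_{\partial D_\epsilon}-r^{n}Y_M(\omega)\bigr)(R\,\omega_x)\right\rangle_{L^2(R\mathbb{S}^{d-1},d\omega_x)}\right|_{\epsilon=0}.
\]
Hence $\mathrm{inv}_0$ is nothing more than multiplication of the $(L,M)$-entry by the explicit finite nonzero constant $|R|^{2k+d-2}/c_{d,k}$; it is a bounded bijection between the relevant weighted sequence spaces, and its natural domain is exactly the set of data sequences for which the right-hand side of Theorem~\ref{all_GPT} converges.

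For steps (ii)--(iii), with the family $\bigl(\mathcal{M}^{(1)}_{L,M}(\lambda,D,h)\bigr)$ now available, Theorem~\ref{all_GPT} delivers $[(1-d)\omega_d+1]\,h(x)H(x)=\mathrm{inv}_3\circ\mathrm{inv}_2\circ\mathrm{inv}_1\bigl[(\mathcal{M}^{(1)}_{L,M})\bigr]$, in which $\mathrm{inv}_1$ reconstructs the operator $Q_{D,h}$ (legitimate because $\text{tr}_{\partial D}\,\text{Ker}(\Delta)$ is dense in every $H^{s}(\partial D,d\sigma)$, so the bilinear data $\langle |r|^{k}Y_L,\,Q_{D,h}|r|^{n}Y_M\rangle$ determine $Q_{D,h}$ on a core), $\mathrm{inv}_2$ extracts the order-one principal symbol $p_{Q_{D,h,1,I}}+p_{Q_{D,h,1,II}}$ via the oscillatory-testing limit $\lim_{t\to\infty}t^{-1}e^{-it\varphi_{x,\xi}}Q_{D,h}\,e^{it\varphi_{x,\xi}}\chi_x$, and $\mathrm{inv}_3$ recovers $h(x)H(x)$ by integrating that symbol against $|\xi|^{-1}$ over $\mathbb{S}^{d-1}$ using the explicit formulas for $p_{Q_{D,h,1,I}}$ and $p_{Q_{D,h,1,II}}$ from Theorem~\ref{lemmaDM}. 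Composing $\mathrm{inv}_3\circ\mathrm{inv}_2\circ\mathrm{inv}_1$ with $\mathrm{inv}_0$ then produces exactly \eqref{eq:comp1}.

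\textbf{The hard part} is not any individual computation but the bookkeeping that makes the infinite composition legitimate: one must justify differentiating the series in \eqref{eq:gpt2} term-by-term in $\epsilon$, interchanging this $\epsilon$-derivative with the $t\to\infty$ symbol-extraction limit, and summing over the doubly-infinite index set entering $\mathcal{G}$ in Theorem~\ref{all_GPT}. I would control all of this with the uniform-in-$(L,M)$ remainder estimate of Theorem~\ref{lemmaDM}, the Weyl asymptotics $\lambda_p^{-1}\sim p^{-1/(d-1)}$, and the rapid decay $\langle\eta_{p,\partial D},\phi\rangle=O(p^{-l})$ for smooth $\phi$ and every $l$ (already recorded before Theorem~\ref{all_GPT}); together these yield absolute convergence at each stage and license the Fubini and dominated-convergence arguments that push the limits and sums through. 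Modulo these justifications, the corollary is immediate.
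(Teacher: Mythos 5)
Your proposal is correct and follows essentially the same route the paper takes: the paper defines $\text{inv}_0$ immediately before the corollary and concludes by composing it with Theorem~\ref{all_GPT}, exactly as you do; the paper simply leaves implicit the observation (which you make explicit via \eqref{eq:gpt2} and Theorem~\ref{lemmaDM}) that $\text{inv}_0$ is diagonal multiplication by $|R|^{2k+d-2}/c_{d,k}$. Your extra care about analytic dependence on $\varepsilon$ and the interchange of limits and sums is a reasonable fleshing-out of what the paper asserts without proof, not a departure from its argument.
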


{\color{black}
\begin{Remark}
\label{Remark_2.10}
We would like to remark that $U_{L , p , \partial D}$ and $U^{-1}_{L , p , \partial D}$ amount to the maps $(a_p) \mapsto (b_L)$ and $ (b_L)  \mapsto (a_p) $ which are given via the following change of coordinates for $\phi \in L^2(\partial D)$,\vspace*{-2mm}
\[
\sum_{p \in \mathbb{N}} a_p \eta_{p,\partial D} = \phi = \sum_{ L \in I_k,  k \in \mathbb{N}} b_{L} r^k Y_{L} (\omega)   \,.
\]
In general these change of coordinate maps are unbounded maps between $(a_p) \in l_2$ to $ (b_L)   \in l_2$, where we again recall $l_2 :=  \{ (a_i)_{i=1}^{\infty} : \sum_{i} |a_i|^2 < \infty \}$, since $\{ \eta_{p,\partial \Omega} \}_{p \in \mathbb{N}}$ is an orthornormal basis in $L^2(\partial D)$ but that $ \{  r^{n} Y_M(\omega)  |_{\partial D} \}_{ M \in I_n, n \in \mathbb{N}}$ have their $L^2(\partial D)$-norms either exponentially growing or exponentially decaying.  To quickly illustrate this point, let us take (as in Examples I.1. that we will discuss) a simple example with $\partial D = R_0 \mathbb{S}^{d-1}$.  In this case, instead of indexing $\eta_{ p , \partial D}$ via $p \in \mathbb{N}$, we order them with $M \in I_n, n \in \mathbb{N}$, since\vspace*{-2mm}
\[
\eta_{ M, R_0 \mathbb{S}^{d-1} }  = \omega_{d-1}^{-1}  R_0^{-\frac{d-1}{2}} Y_M(\omega) , 
\]
where $\omega_{d-1}$ is the measure of $ \mathbb{S}^{d-1}$.  Therefore if $(a_M)$ and $(b_L)$ satisfy
\[
\sum_{ L \in I_k,  k \in \mathbb{N}} b_{L} R_0^k Y_{L} (\omega)  = \phi = \sum_{ M \in I_n,  n \in \mathbb{N}} a_{M} \eta_{ M, R_0 \mathbb{S}^{d-1} }   =  \sum_{ M \in I_n,  n \in \mathbb{N}} a_M \omega_{d-1}^{-1}  R_0^{-\frac{d-1}{2}} Y_M(\omega),
\]
then for all $M \in I_n,  n \in \mathbb{N}$, 
\[
a_M    = b_M \omega_{d-1} R_0^{n-\frac{d-1}{2}}.
\]
Hence, unless $R_0 = 1$, either the map $(a_M) \in l_2 \mapsto (b_L)   \in l_2$ or that $(b_L) \in l_2 \mapsto (a_M)   \in l_2$ is unbounded. As we will analyze in Examples I.2., the unboundedness occurs in a more general class of $\partial D$, and the following condition number
$$
\kappa( U_{L , p , \partial D } |_{\mathcal{L} ( V_{s,\partial D }, W_{s,\partial D } ) }  ) :=  
\| U_{L , p , \partial D }  |_{\mathcal{L} ( V_{s,\partial D }, W_{s,\partial D } ) }  \|_{l^2 \rightarrow l^2}
\| U^{-1}_{L , p , \partial D }  |_{\mathcal{L} ( V_{s,\partial D }, W_{s,\partial D } ) }  \|_{l^2 \rightarrow l^2}
$$
(where $V_{s,\partial D }, W_{s,\partial D }$ will be defined soon below) grows exponential with respect to $s$.  This confirms the unboundedness of this change of coordinate maps between $(a_p) \in l_2$ and $ (b_L)   \in l_2$.
\end{Remark}

In general, it is well known that the inverse problem from the scattered field to the reconstruction of $\partial D$ is exponentially ill-posed {\color{black}(especially when the map $\text{inv}_0$ introduces an additional exponentially growing factor $|R|^{2k+d + 2}$ from the Fourier coefficients of the scattered field to the GPTs in \eqref{eq:gpt2} as also described in Corollary \ref{cor:1} when $R$ is large).} However we shall dissect the composition map in Corollary~\ref{all_GPT_full} and understand more on how the ill-posedness is given by different geometric properties of the domain.
}Now let us gaze at the composition $ \text{inv}_3 \circ \text{inv}_2 \circ \text{inv}_1$ in \eqref{eq:comp1} and establish the properties of the composition of maps under a specific assumption on $\partial D$.

Before we proceed with the inverse problem, let us understand the perturbation of the change of the above coordinate maps and how they affect the condition number of a restriction of $U$ to a particular finite dimensional subspace.
{\color{black}
For this purpose, let us consider the following restriction and extension:
\beqnx
U^{-1}_{L , p , \partial D } |_{\mathcal{L} ( V_{s,\partial D }, W_{s,\partial D } ) } &:=&  \text{Proj}^*_{W_{s,\partial D } } \circ  U^{-1}_{L , M , \partial D }  \circ \text{Proj}_{V_{s,\partial D }} , \\
U_{L , p,  \partial D } |_{\mathcal{L} ( W_{s,\partial D } , V_{s,\partial D })} &:=&  \text{Proj}^*_{V_{s,\partial D }} \circ  U_{L, M , \partial D }  \circ \text{Proj}_{W_{s,\partial D }} ,
\eqnx
where $\text{Proj}_{V_{s,\partial D} } : L^2(\partial D, d \sigma) \rightarrow V_{s,\partial D} := \text{Span}\{ \eta_{p, \partial D}  \}_{p <  | \{ M : m_{d-1} \leq s \} | }$ and  $\text{Proj}_{W_{s,\partial D} } : L^2(\partial D, d \sigma) \rightarrow W_{s,\partial D} := \text{Span}\{  r^{n} Y_M(\omega)  |_{\partial D}  \}_{m_{d-1} \leq s}$ with $ s \in \mathbb{N}$, and the $U_{L, M , \partial D } $ is defined such that the operator $U^{-1}_{L , M , \partial D } $ is represented as
\[
U^{-1}_{L , M , \partial D }  := ( \langle r^{k} Y_L(\omega)  \,,\,  \eta_{p,\partial D}  \rangle_{L^2(\partial D, d \sigma)}  )_{}
\]
in the bases
$  \eta_{p, \partial D}  \}_{p <  | \{ M : m_{d-1} \leq s \}  }$ for $V_{s,\partial D} $ and
$ \{  r^{n} Y_M(\omega)  |_{\partial D}  \}_{m_{d-1} \leq s} $ for $ W_{s,\partial D}  $.
}

\begin{Lemma} \label{perturb_GPT}
{ \color{black} For $d > 2$,} consider a general $\partial D \in \mathcal{C}^{{\color{black} 4 },\alpha}$, and {\color{black} suppose $\text{Ric} \geq - C_0 \, g  $ for some $C_0$.} Let $\partial D_{\varepsilon}$ be an $\varepsilon$-perturbation under a direction $h \in \mathcal{C}^{ {\color{black} 3 },\alpha}$. Let $S= | \{ T : t_{d-1} \leq s \} | $, for $\varepsilon\in\mathbb{R}_+$ small enough, we have
\beqnx
& &\max \bigg\{  \left| \| U^{-1}_{L , p , \partial D_{\varepsilon} } |_{\mathcal{L} ( V_{s,\partial D_{\varepsilon} }, W_{s,\partial D_{\varepsilon} } ) }  \|_{l^2 \rightarrow l^2} -  \| U^{-1}_{L , p , \partial D } |_{\mathcal{L} ( V_{s,\partial D }, W_{s,\partial D } ) }  \|_{l^2 \rightarrow l^2}  \right|  \,, \,\\
& & \quad \quad \quad \left| \| U_{L , p , \partial D_{\varepsilon} } |_{\mathcal{L} ( V_{s,\partial D_{\varepsilon} }, W_{s,\partial D_{\varepsilon} } ) }  \|^{-1}_{l^2 \rightarrow l^2} -  \| U_{L , p , \partial D } |_{\mathcal{L} ( V_{s,\partial D }, W_{s,\partial D } ) }  \|^{-1}_{l^2 \rightarrow l^2}  \right| \bigg \} \\
&\leq&  { \color{black} C_d } \, \varepsilon \, \max_{1\leq P \leq S}  \bigg\{   \max \left \{ 1, \max_{z \neq \lambda_p, z \in \lambda(\Delta_{\partial D}) }{\frac{  { \color{black} \| g^{-1} \|_{C^0}^2 \max\{ 1,  \|g\|_{C^1}^3  \}   }  }{|z^2 - \lambda_p^2|}} \right \} \\
&   &  \quad \times { \color{black} \|  h \|_{C^1} }    \| A \|_{C^1}  (  \lambda_p^2 +  { \color{black} C_0 } )   \|r^k|_{\partial D}\|_{L^2(\partial D,d \sigma)}  \\
& & \quad +  \|  h \|_{C^1}   \| \partial_\nu ( r^k Y_L(\omega) ) |_{\partial D} \|_{L^2(\partial D,d \sigma)} +  (d-1) \|  h \|_{C^0}     \| H \|_{C^0} \|  r^k Y_L(\omega)  |_{\partial D} \|_{L^2(\partial D,d \sigma)} \bigg\}. 
\eqnx
for some dimensional constant $C_d$.
\end{Lemma}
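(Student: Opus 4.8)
The plan is to estimate the three perturbed quantities by comparing, term by term, the structures that define $U$ and $U^{-1}$ on the finite-dimensional subspaces $V_{s,\partial D}$ and $W_{s,\partial D}$, and then track how each structure changes under the $\varepsilon$-perturbation $\Psi_\varepsilon$. First I would recall that $U^{-1}_{L,p,\partial D} = \langle r^{k}Y_L(\omega), \eta_{p,\partial D}\rangle_{L^2(\partial D, d\sigma)}$, so that the matrix entries are Fourier coefficients of the (restricted) solid harmonics against the Laplace--Beltrami eigenbasis. The restricted/extended operators $U^{-1}_{L,p,\partial D}|_{\mathcal L(V_{s},W_{s})}$ and $U_{q,L,\partial D}|_{\mathcal L(W_{s},V_{s})}$ are genuine finite-rank operators between the $S = |\{T : t_{d-1}\le s\}|$-dimensional spaces, so their operator norms and the reciprocal of the norm of $U$ are continuous functions of finitely many inner products; the task is quantitative control of the $\varepsilon$-derivative of those inner products.

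The key steps, in order: (1) For each relevant index, write the perturbed inner product $\langle r^{k}Y_L(\omega)\circ\Psi_\varepsilon^{-1}, \eta_{p,\partial D_\varepsilon}\rangle_{L^2(\partial D_\varepsilon, d\sigma_\varepsilon)}$ and split the $\varepsilon$-variation into three contributions: the variation of the surface measure $d\sigma_\varepsilon = (1+\varepsilon h\,\mathrm{tr}_g(A))\,d\sigma + O(\varepsilon^2)$ from \eqref{Taylor1}; the variation of the pulled-back harmonic $r^{k}Y_L\circ\Psi_\varepsilon$, whose normal-derivative interpretation couples to $\partial_\nu(r^{s}Y_T(\omega))|_{\partial D}$ via the DtN map $\Lambda_0$; and the variation of the eigenfunction $\eta_{p,\partial D_\varepsilon}$ itself. (2) For the eigenfunction variation, use standard first-order perturbation theory for $-\Delta_{\partial D}$: the shape derivative of $\eta_{P,\partial D}$ expands in the unperturbed basis with coefficients $\langle \dot\eta_P, \eta_Q\rangle = \langle (\delta(-\Delta_{\partial D}))\eta_P, \eta_Q\rangle/(\lambda_Q^2 - \lambda_P^2)$ for $Q\ne P$, and the variation of the metric is governed by $\|g\|_{C^1}$ and $\|A\|_{C^1}$; this is precisely where the factor $\max\{1,\max_{z\ne\lambda_P}\|g\|_{C^1}^2/|z^2-\lambda_P^2|\}\,\lambda_P^2$ enters, together with the $L^2$-size $\|r^{s}|_{\partial D}\|_{L^2(\partial D, d\sigma)}$ of the tested harmonic. (3) Bound the measure-variation term by $\varepsilon(d-1)\|h\|_{C^0}\|H\|_{C^0}$ times the relevant $L^2$-norm, using $\mathrm{tr}_g(A) = (d-1)H$. (4) Bound the pulled-back-harmonic term by $\|h\|_{C^1}$ times $\|\partial_\nu(r^{s}Y_T(\omega))|_{\partial D}\|_{L^2(\partial D, d\sigma)}$, exploiting that $\frac{d}{d\varepsilon}\big|_{\varepsilon=0}(u_0\circ\Psi_\varepsilon) = h\,\partial_\nu u_0$ on $\partial D$. (5) Finally, invoke the elementary fact that for finite-rank operators $\big|\|B_\varepsilon\| - \|B_0\|\big|\le \|B_\varepsilon - B_0\|$ and, for the invertible $U$ restricted appropriately, $\big|\|U_\varepsilon\|^{-1} - \|U_0\|^{-1}\big|\le \|U_\varepsilon - U_0\|/(\|U_\varepsilon\|\|U_0\|)$, bounded again by $\|U_\varepsilon - U_0\|$ after absorbing constants; summing the three contributions over $1\le P\le S$ and taking the maximum yields the asserted bound, with the overall factor $2$ accounting for the two operators and the two types of term.

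The main obstacle I expect is step (2): the perturbation expansion of the Laplace--Beltrami eigenfunctions $\eta_{p,\partial D_\varepsilon}$ requires care because the small-denominator factor $1/|z^2-\lambda_P^2|$ can be large when eigenvalues are close, and one must control the convergence of the resulting series in the unperturbed eigenbasis uniformly on the finite block $V_{s},W_{s}$. The resolution is that we only need the \emph{restriction} to the $S$-dimensional subspace, so the relevant sum is finite and the worst small denominator over the finite set of eigenvalues below the cutoff is exactly what appears in the stated $\max$; combined with the $C^1$-regularity of the metric perturbation (hence of $\delta(-\Delta_{\partial D})$, controlled by $\|g\|_{C^1}$ and $\|A\|_{C^1}$) and the rapid decay $\langle\eta_{p,\partial D},\phi\rangle = O(p^{-l})$ for smooth $\phi$ noted above, the series converges and the bound is uniform for $\varepsilon$ small enough. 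A secondary technical point is that $\partial D$ is only assumed $\mathcal{C}^{1,\alpha}$ here (weaker than the $\mathcal{C}^{2,\alpha}$ used earlier), so one must make sure every estimate uses only first-order geometric data — which it does, since $A$, $H$, $g$ appear only through their $C^0$ or $C^1$ norms.
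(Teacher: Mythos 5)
Your proposal follows essentially the same route as the paper's proof: split the perturbation of the inner products $\langle r^{k}Y_L(\omega),\eta_{p,\partial D}\rangle_{L^2(\partial D,d\sigma)}$ into surface-measure, pulled-back-harmonic, and eigenfunction contributions, apply first-order perturbation theory for the Laplace--Beltrami spectrum with the small denominators $|z^2-\lambda_P^2|$ controlled on the finite block, and conclude via singular-value / operator-norm continuity. The one ingredient you invoke only informally (``standard first-order perturbation theory'') is supplied in the paper by Osborn's theorem for the collectively compact family $\Delta_{\partial D^\varepsilon}^{-1}$, which is what rigorously justifies the eigenvalue and eigenfunction expansions and the treatment of repeated eigenvalues; otherwise the decomposition and the bounds are the same.
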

\begin{proof}

For a general $\partial D \in \mathcal{C}^{4,\alpha}$, considering $\partial D_{\varepsilon}$ under a perturbation $h$ of $\mathcal{C}^{3,\alpha}$ { \color{black} (which guarantees that $\partial D_{\varepsilon}  \in \mathcal{C}^{3,\alpha}$) }, we compare the Laplacian on $\partial D$ with the one on $\partial D_{\varepsilon}$ 
via {\color{black} their corresponding Dirichlet forms.} By direct computations in a local coordinate, we have that
{\color{black}
\beqnx
&  & \Delta_{\partial D_{\varepsilon}}  - \Delta_{\partial D}  \\
&=& -  \varepsilon  \sum_{i,j,k,l}^{d-1}\frac{1}{\sqrt{|g|}}  \partial_j  \left(  h(x) \sqrt{|g|}  \left( 2 g^{ik}(x) A_{kl}(x) g^{lj}(x) +  (d-1) H(x) g^{ij}(x) \right)   \partial_i (\cdot) \right)  + O(\varepsilon^2), 
\eqnx
where we make use of $\partial_\nu g^{-1} = 2 g^{-1} A g^{-1} $ as well as $\partial_\nu  \sqrt{ |g|} = (d-1) H \sqrt{ |g| } $.}
Since $\Delta_{\partial D_{\varepsilon}}^{-1}$ is collectively compact {\color{black} from $L^2(\partial D_\varepsilon, d \sigma)$ to  $L^2(\partial D_\varepsilon, d \sigma) $ (which is identified as $L^2(\partial D, d \sigma) $ via the diffemorphism $\Psi_\varepsilon(x)$ introduced below \eqref{variationvariationD})}  with respect to $\varepsilon$, 
by Osborn's Theorem \cite{osborn} and that for repeated eigenvalues, we have, after applying $\Delta_{\partial D_{\varepsilon}}^{-1} =  - \varepsilon \Delta_{\partial D_{\varepsilon}}^{-1}  \partial_\varepsilon  \Delta_{\partial D_{\varepsilon}} \mid_{\varepsilon = 0} \Delta_{\partial D_{\varepsilon}}^{-1} + O(\varepsilon^2)$, that if we consider $- \lambda^2 \neq 0$ an eigenvalue of $\Delta_{\partial D}$ with multiplicity $m$ and $E_{\lambda}$ be its eigenspace, then there exists $\{ \eta_{\lambda,s, \partial D} \}_{s=1}^m $ being an orthonormal basis of $E_{\lambda}$ such that
{\color{black}
\beqnx
& & \lambda_{s, \varepsilon}^2 - \lambda^2 \\
&=&  \varepsilon \left \langle  \nabla \eta_{\lambda,s, \partial D} ,  h(x)  \left( 2 g^{-1} (x) A(x) g^{-1} (x) +  (d-1) H(x) g^{-1} (x) \right)  \nabla  \eta_{\lambda,s, \partial D}  \right \rangle_{L^2(\partial D, d \sigma)} + O(\varepsilon^2) ,
\eqnx
and
\beqnx
& &
{\left[ \eta_{\lambda,s, \partial D_{\varepsilon}} \circ \Psi_{\varepsilon} - \eta_{\lambda,s, \partial D} \right] }\big|_{E_\lambda^{\perp}}  \\
&=& \varepsilon \sum_{z \neq \lambda} \frac{ \left \langle  \nabla \eta_{z,s, \partial D} ,  h(x)  \left( 2 g^{-1} (x) A(x) g^{-1} (x) +  (d-1) H(x) g^{-1} (x) \right)  \nabla  \eta_{\lambda,s, \partial D}  \right \rangle_{L^2(\partial D, d \sigma)} }{z^2 - \lambda^2} \, \eta_{\lambda, \partial D} \\
&& + O(\varepsilon^2),
\eqnx
}where $\left( -\lambda_{s, \varepsilon}^2, \eta_{\lambda,s,, \partial D_{\varepsilon}}  \right)$ is an eigenpair of $\Delta_{\partial D_{\varepsilon}}$, and {\color{black}$\Psi_{\varepsilon}$ brings $\partial D$ to $\partial D_{\varepsilon}$}.  
{\color{black}
Moreover, by Bochner's formula \cite{Lee} (or the facts that $\nabla g = 0$, $\nabla_{e_i} \nabla_{e_j}  = \nabla_{e_j} \nabla_{e_i} + R(e_i, e_j) $ and integration by parts), we have
\begin{eqnarray*}
& &\int_{\partial \Omega}  \text{tr}_g \left( \text{Hess} \, \eta_{\lambda,s, \partial D} \, g^{-1} \, \text{Hess} \, \eta_{\lambda,s, \partial D}  \right)  d \sigma \\
& = &
\int_{\partial \Omega}  \left(  | \Delta_{\partial D} \eta_{\lambda,s, \partial D}  |^2 d \sigma
-  \text{Ric} \left( \partial \, \eta_{\lambda,s, \partial D} , \partial  \, \eta_{\lambda,s, \partial D} \right) \right)  d \sigma \\
& \leq &
 \int_{\partial \Omega}  \left(  | \Delta_{\partial D} \eta_{\lambda,s, \partial D}  |^2 d \sigma
+ {\color{black} C_0} \, g^{-1} \left( \partial \, \eta_{\lambda,s, \partial D} , \partial \, \eta_{\lambda,s, \partial D} \right) \right)  d \sigma \\
& = &  \lambda^4 + { \color{black} C_0 }  \lambda^2 \,.
\end{eqnarray*}
We also have
\begin{eqnarray*}
&  & \left \langle  \nabla \eta_{\lambda,s, \partial D} ,  h(x)  \left( 2 g^{-1} (x) A(x) g^{-1} (x) +  (d-1) H(x) g^{-1} (x) \right)  \nabla  \eta_{\lambda,s, \partial D}  \right \rangle_{L^2(\partial D, d \sigma)} \\
& \leq &  d \| h \|_{C^0}  \| A \|_{C^0} \| g^{-1}\|_{C^0} \left \langle  \nabla \eta_{\lambda,s, \partial D} , g^{-1} (x)  \nabla  \eta_{\lambda,s, \partial D}  \right \rangle_{L^2(\partial D, d \sigma)}   \\
& \leq &  C_d \| h \|_{C^0}  \| A \|_{C^0}  \| g^{-1}\|_{C^0}  \lambda^2  . 
\end{eqnarray*}
Similarly, we have (together with the fact that $\nabla g = 0$
) that
\begin{eqnarray*}
&  & \left \|  \text{div} \left( h(x)  \left( 2 g^{-1} (x) A(x) g^{-1} (x) +  (d-1) H(x) g^{-1} (x) \right)  \nabla  \eta_{\lambda,s, \partial D}  \right) \right \|_{L^2(\partial D, d \sigma)}^2  \\
&\leq & 
C_d \|h \|_{C^1}^2 \|A\|_{C^0}^2 \| g^{-1} \|_{C^0}^3 \left  \langle  \nabla \eta_{\lambda,s, \partial D} , g^{-1} (x)  \nabla  \eta_{\lambda,s , \partial D}  \right \rangle_{L^2(\partial D, d \sigma)}  \\
& & + C_d \|  h \|_{C^0}^2
\| A \|_{C^1}^2 \|g^{-1}\|_{C^0}^2  \max\{ 1,  \|g\|_{C^1}^5  \}  \left \langle  \nabla \eta_{\lambda,s, \partial D} , g^{-1} (x)  \nabla  \eta_{\lambda,s, \partial D}  \right \rangle_{L^2(\partial D, d \sigma)}  \\
&  & + C_d \|  h \|_{C^0}^2 \|A\|_{C^0}^2 \| g^{-1} \|_{C^0}^2 \int_{\partial \Omega}  \text{tr}_g \left( \text{Hess} \, \eta_{\lambda,s, \partial D} \, g^{-1} \, \text{Hess} \, \eta_{\lambda,s, \partial D}  \right)  d \sigma \\
&\leq & 
 C_d \|  h \|_{C^1}^2 \| A \|_{C^1}^2  \| g^{-1} \|_{C^0}^3  \max\{ 1,  \|g\|_{C^1}^5  \}     \left(  \lambda^4 + { \color{black} C_0 }  \lambda^2 \right). 
\end{eqnarray*}
Combining the previous estimates, we obtain
\beqnx
\lambda_{s, \varepsilon}^2 - \lambda^2 =  \varepsilon \, C_d \| h \|_{C^0}  \| A \|_{C^0}   \| g^{-1}\|_{C^0}  \lambda^2  + O(\varepsilon^2) \,,
\eqnx
and hence
\beqn
| \lambda_{s, \varepsilon}^2 - \lambda^2 | \leq  \varepsilon \, C_d \int_0^1 \| h \|_{C^0}  \| A(t) \|_{C^0}   \| g^{-1} (t)\|_{C^0}  \lambda^2(t)  dt \leq  \varepsilon \, C_d \| h \|_{C^0}  \| A \|_{C^0}   \| g^{-1}\|_{C^0}  \lambda^2, \ \
 \label{eigenvalue}
\eqn
where $ ( g(t), A(t) , \lambda(t)) $ are the respective metric, curvature and eigenvalue of $\partial D_{t \varepsilon}$ with $\| A(t) \|_{C^0}$ and $\| g^{-1} (t)\|_{C^0} $ guaranteed to have uniform bounds across $t \in [0,1]$ for small $\varepsilon$ and $h \in \mathcal{C}^{3,\alpha}$.
In a similar manner, by combining the previous estimates, we also have
\beqn
& &\| {\left[ \eta_{\lambda,s, \partial D_{\varepsilon}} \circ \Psi_{\varepsilon} - \eta_{\lambda,s, \partial D} \right] }|_{E_\lambda^{\perp}} \|^2_{L^2(\partial D, d \sigma)} \notag \\
& \leq & \varepsilon^2 \, \max_{z \neq \lambda, z \in \lambda(\Delta_{\partial D}) }{\frac{1}{|z^2 - \lambda^2|^2}}\times \notag \\
&& 
\left \|  \text{div} \left( h(x)  \left( 2 g^{-1} (x) A(x) g^{-1} (x) +  (d-1) H(x) g^{-1} (x) \right)  \nabla  \eta_{\lambda,s, \partial D}  \right) \right \|_{L^2(\partial D, d \sigma)}^2 
\notag \\
& \leq &  C_d \,  \varepsilon^2 \, \max_{z \neq \lambda, z \in \lambda(\Delta_{\partial D}) }{\frac{1}{|z^2 - \lambda^2|^2}} \|  h \|_{C^1}^2 \| A \|_{C^1}^2  \| g^{-1} \|_{C^0}^3  \max\{ 1,  \|g\|_{C^1}^5  \}     \left(  \lambda^4 + { \color{black} C_0 }  \lambda^2 \right), \quad  \quad  
\label{eigenvector}
\eqn
where we make use of the fact that $\| A(t) \|^2_{C^1}   \| g^{-1} (t)\|^3_{C^1} \max\{ 1, \| g^{-1} (t)\|^5_{C^1} \} $ is guaranteed to have a uniform bound across $t \in [0,1]$ for small $\varepsilon$ as $h \in \mathcal{C}^3$.
}From the fact that $\| \eta_{\lambda,s, \partial D_{\varepsilon}}  \|^2_{L^2(\partial D_{\varepsilon}, d \sigma)} =1$, $d \sigma_{\partial D_{\varepsilon}} = (1 + \varepsilon (d-1) h(x) H(x) )d \sigma_{\partial D} + O(\varepsilon^2)$ and $L^2 (\partial D, d \sigma) =  E_{\lambda} \oplus E_{\lambda}^{\perp}$, we further have for $s = 1,...,m$, 
\beqnx
& &\|  \eta_{\lambda,s, \partial D_{\varepsilon}} \circ \Psi_{\varepsilon} - \eta_{\lambda,s, \partial D}  \|^2_{L^2(\partial D, d \sigma)} \\
& \leq &  { \color{black} C_d } \varepsilon^2 \, \max \left \{ 1, \max_{z\neq \lambda}{\frac{ { \color{black} \| g^{-1} \|_{C^0}^3  \max\{ 1,  \|g\|_{C^1}^5  \}   }}{|z^2 - \lambda^2|^2}} \right \} { \color{black} \|  h \|_{C^1}^2 }     \| A \|^2_{C^1}  \left(  \lambda^4 + { \color{black} C_0 }  \lambda^2 \right)   
. 
\eqnx
Therefore we have that if $\varepsilon\in\mathbb{R}_+$ is sufficiently small, 
\beqnx
& & \left|  \langle r^{k} Y_L(\omega)  \,,\,  \eta_{p,\partial D_{\varepsilon}}  \rangle_{L^2(\partial D_{\varepsilon}, d \sigma)} -   \langle r^{k} Y_L(\omega)  \,,\,  \eta_{p,\partial D}  \rangle_{L^2(\partial D, d \sigma)} \right|  \\
&\leq&  { \color{black} C_d } \, \varepsilon \, \bigg\{   \max \left \{ 1, \max_{z \neq \lambda_p, z \in \lambda(\Delta_{\partial D}) }{\frac{  { \color{black} \| g^{-1} \|_{C^0}^2 \max\{ 1,  \|g\|_{C^1}^3  \}   }  }{|z^2 - \lambda_p^2|}} \right \} { \color{black} \|  h \|_{C^1} }    \| A \|_{C^1}  (  \lambda_p^2 +  { \color{black} C_0 } )   \|r^k|_{\partial D}\|_{L^2(\partial D,d \sigma)}  \\
& & \quad +  \|  h \|_{C^1}   \| \partial_\nu ( r^k Y_L(\omega) ) |_{\partial D} \|_{L^2(\partial D,d \sigma)} +  (d-1) \|  h \|_{C^0}     \| H \|_{C^0} \|  r^k Y_L(\omega)  |_{\partial D} \|_{L^2(\partial D,d \sigma)} \bigg\}. 
\eqnx
Next we recall that
\beqnx
 \| U^{-1}_{L , p , \partial D } |_{\mathcal{L} ( V_{s,\partial D }, W_{s,\partial D } ) }  \|_{l^2 \rightarrow l^2}  = \sigma_{\max}(  U^{-1}_{L , M , \partial D } |_{\mathcal{L} ( V_{s,\partial D }, W_{s,\partial D } ) } ) , \\
\| U_{ L, q , \partial D } |_{\mathcal{L} ( W_{s,\partial D } , V_{s,\partial D })}   \|_{l^2 \rightarrow l^2} = 1/  \sigma_{\min}(U_{ L , M, \partial D } |_{\mathcal{L} ( W_{s,\partial D } , V_{s,\partial D })}  )    ,
\eqnx
where $\sigma_{\max} (T)$ and $\sigma_{\min} (T)$ are the respective maximum and minimum singular values of an operator $T$. Finally, by Osborn's theorem once again, one can show the lemma.
\end{proof}

{\color{black}  
\begin{Remark}
Notice that Lemma \ref{perturb_GPT} requires $h \in C^{ {\color{black} 3 },\alpha} $ but the error bound is given in terms of $ \| h \|_{\mathcal{C}^1} <1$. Hence, we are actually considering a $\mathcal{C}^1$ $\varepsilon$-ball intersecting $C^{ {\color{black} 3 },\alpha}$.  The only reason to have a finite $C^{ {\color{black} 3 },\alpha}$ bound is to guarantee $\partial D_{t \epsilon}$ to be in $\mathcal{C}^{{\color{black} 3 } ,\alpha}$ across $0 \leq t \leq 1$ for small $\varepsilon$ and keep $\| A(t) \|_{C^1}$ uniformly bounded across the choice of perturbations.
\end{Remark}
}
{ \color{black}
\begin{Remark}
We note that via the Gauss-Codazzi formula \cite{Jost,Kobayash2} and the fact that the ambient space $\mathbb{R}^d$ is flat, one has in the semi-geodesic normal coordinate for $| \, \omega \,| = 1$:
\[
\text{Ric} (\omega, \omega) =  (d-1) H(x) \,  \langle A(x)  \, \omega, \, \omega\rangle - \, |\, A(x) \omega \, |^2   \, . 
\]
Hence if assuming the strict convexity of $\partial D$, i.e. $- A  \geq R^{-1} g $ for some $R > 0$ (cf. \eqref{expansion0}), we then have in the geodesic normal coordinate that
\beqn\label{eq:aaaa2}
\text{Ric} (\omega, \omega) \geq  (d-2) R^2 | \, \omega \,|^2  \,. \label{curvature_bound}
\eqn
By \eqref{eq:aaaa2}, together with the Bonnet-Myers theorem \cite{DoCarmo,Myers} and the fact that $g$ on $\partial D$ is induced from $\mathbb{R}^d$, we can show that
\[
 \text{diam} (\partial D)  := \sup_{p,q \in \partial D}  |p - q|  \leq  \sup_{p,q \in \partial D}  d_g(p,q)  < C_d R, 
\]
where $C_d$ is a dimensionality constant. That is, $\partial D$ sits in an open ball of radius $R$. We would also like to point out that if $\partial D$ is assumed to be strictly convex, the statement of Lemma \ref{perturb_GPT} can be simplified, e.g.,
\[ \| g\|_{C^0}  \| g^{-1}\|_{C^0}  \leq C_d  \, \left(1 + \frac{  \sup_{p,q \in \partial D}  d_g(p,q) \, \| g\|_{C^1}  }{\| g\|_{C^0}} \right) \leq C_d \, \left(1 +  { \color{black} C_d R } \frac{\| g\|_{C^1}  }{\| g\|_{C^0}} \right)  \,. \]
But the validity of Lemma \ref{perturb_GPT} does not require this assumption.
\end{Remark}
}

{\color{black} \noindent In order to consolidate our study, we next compute the condition number
$$
\kappa( U_{L , p , \partial D } |_{\mathcal{L} ( V_{s,\partial D }, W_{s,\partial D } ) }  ) :=  
\| U_{L , p , \partial D }  |_{\mathcal{L} ( V_{s,\partial D }, W_{s,\partial D } ) }  \|_{l^2 \rightarrow l^2}
\| U^{-1}_{L , p , \partial D }  |_{\mathcal{L} ( V_{s,\partial D }, W_{s,\partial D } ) }  \|_{l^2 \rightarrow l^2}
$$
for several concrete examples of $\partial D$.
}

\medskip

\noindent \textbf{Example I.1.} Let us first consider $\partial D = R_0 \mathbb{S}^{d-1}$.  In this case, instead of indexing $\eta_{ k , \partial D}$ via $k \in \mathbb{N}$, we order them with $M \in I_n, n \in \mathbb{N}$, since
\[
\eta_{ M, R_0 \mathbb{S}^{d-1} }  = \omega_{d-1}^{-1}  R_0^{-\frac{d-1}{2}} Y_L(\omega) , 
\]
where $\omega_{d-1}$ is the measure of $ \mathbb{S}^{d-1}$. One has
\beqnx
U^{-1}_{L , M , R_0 \mathbb{S}^{d-1}} =    \omega_{d-1}^{-1}  R_0^{k -\frac{d-1}{2}}  \langle  Y_L(\omega)  \,,\,   Y_M(\omega)   \rangle_{L^2(\partial D, d \sigma)} =  \omega_{d-1}^{-1}R_0^{k -\frac{d-1}{2}}   \delta_{L M}  \,.
\eqnx
Hence, it can deduced that
{
\beqnx
&& \| U^{-1}_{L , M , R_0 \mathbb{S}^{d-1}} |_{\mathcal{L} ( V_{s,R_0 \mathbb{S}^{d-1}}, W_{s,R_0 \mathbb{S}^{d-1}} ) }  \|_{l^2 \rightarrow l^2}\\
 &  = & \sigma_{\max}(  U^{-1}_{L , M , R_0 \mathbb{S}^{d-1}} |_{\mathcal{L} ( V_{s,R_0 \mathbb{S}^{d-1}}, W_{s,R_0 \mathbb{S}^{d-1}} ) } )  =  \omega_{d-1}^{-1} \, R_0^{-\frac{d-1}{2}}  \max \{ R_0^s , 1 \},  \\
&&\| U_{M , L , R_0 \mathbb{S}^{d-1}} |_{\mathcal{L} ( W_{s,R_0 \mathbb{S}^{d-1}} , V_{s,R_0 \mathbb{S}^{d-1}})}   \|_{l^2 \rightarrow l^2}\\
& = & 1/  \sigma_{\min}(U_{M , L , R_0 \mathbb{S}^{d-1}} |_{\mathcal{L} ( W_{s,R_0 \mathbb{S}^{d-1}} , V_{s,R_0 \mathbb{S}^{d-1}})}  )    =  \omega_{d-1} \,  R_0^{\frac{d-1}{2} }  \max \{ R_0^{-s} , 1 \}.    
\eqnx
}
Therefore we have the following estimate of the condition number when $\partial D = R_0 \mathbb{S}^{d-1}$:
\beqnx
\kappa( U_{M , L , R_0 \mathbb{S}^{d-1}} |_{\mathcal{L} ( W_{s,R_0 \mathbb{S}^{d-1}} , V_{s,R_0 \mathbb{S}^{d-1}})}  ) =  \max\{ R_0^{-s}, R_0^{s}  \}\,.
\eqnx

\noindent \textbf{Example I.2.}  Consider $\partial D_{\delta} $ as follows: $\partial D = R_0 \mathbb{S}^{d-1}$ with perturbation $\tilde{h} \in  \mathcal{C}^{ {\color{black} 4 },\alpha} $ such that $\| \tilde{h}  \|_{\mathcal{C}^1} <1$ and a sufficiently small magnitude $\delta$. {\color{black} Since $\partial D$ is $\mathcal{C}^{\infty}$, for small $\delta$, $\partial D_{\delta}  \in \mathcal{C}^{ {\color{black}4}, \alpha}$.  Now on $R_0 \mathbb{S}^{d-1}$, it is directly verified that $-A_{R_0 \mathbb{S}^{d-1}} \geq C_d R_0^{-1}g$. Hence by the Gauss-Codazzi formula, we have {\color{black} \eqref{curvature_bound} } with $R_0$ replacing $R$.} 
Applying Lemma \ref{perturb_GPT}, together with $\lambda_{T,\mathbb{S}^d} = s(s + d-2)$ for $t_{d-1} = s$, we have for $\delta\in\mathbb{R}_+$ sufficiently small and $s\in\mathbb{R}_+$ sufficiently large that 
\beqnx
& &\max \bigg\{  \left| \| U^{-1}_{L , p , \partial D_{\delta} } |_{\mathcal{L} ( V_{s,\partial D_{\delta} }, W_{s,\partial D_{\delta} } ) }  \|_{l^2 \rightarrow l^2} - \omega_{d-1}^{-1} \, R_0^{-\frac{d-1}{2}}  \max \{R_0^s , 1\} \right|  \, , \, \\
&  & \quad \quad \quad \left| \| U_{L , p , \partial D_{\delta} } |_{\mathcal{L} ( V_{s,\partial D_{\delta} }, W_{s,\partial D_{\delta} } ) }  \|_{l^2 \rightarrow l^2}^{-1} -   \omega_{d-1}^{-1} \,  R_0^{- \frac{d-1}{2} }  \min \{R_0^s , 1\}   \right| \bigg\}  \\
&<&   \delta C_d \, \max \bigg\{    s(s + d-2)  R_0^{s+ 1 }  +   s   R_0^{s-2} +  R_0^{s-1}  \, , \, (d-1)  R_0^{2}  +   R_0^{-1} +  1 \bigg\} \\
&<&   \delta C_d \, \max \bigg\{    (s + d-2)^2  R_0^{s+ 1 }  \, , \, R_0^{-1} +  d \bigg\}  \, .
\eqnx
Therefore for small enough $\delta\in\mathbb{R}_+$, we obtain
\beqnx
& & \kappa( U_{L , p , \partial D_{\delta} } |_{\mathcal{L} ( V_{s,\partial D_{\delta} }, W_{s,\partial D_{\delta} } ) }  ) \\
&\leq&
\frac{ \max \{R_0^s , 1\} +  \delta C_d  R_0^{ \frac{d-1}{2} }  \, \max \bigg\{    (s + d-2)^2 R_0^{s+1 } \, , \, R_0^{-1} +  d \bigg\}    }{  \min \{R_0^s , 1\}  -  \delta  C_d  R_0^{ \frac{d-1}{2} }  \, \max \bigg\{    (s + d-2)^2  R_0^{s+ 1 } \, , \, R_0^{-1} +  d \bigg\}    } \\
&\leq&
\begin{cases}
\frac{ R_0^s +  \delta C_d R_0^{ \frac{d-1}{2} } (s + d-2)^2  R_0^{s+ 1 }}{  1 -   \delta C_d  R_0^{ \frac{d-1}{2} }   (s + d-2)^2  R_0^{s+ 1 }    } & \text{ if } R_0 \geq 1 \,,\\
\frac{ 1 +  \delta C_d R_0^{ \frac{d-1}{2} }\,  ( R_0^{-1} +  d )  }{  R_0^s   -   \delta C_d  R_0^{ \frac{d-1}{2} } \, ( R_0^{-1} +  d )  } & \text{ if } R_0 \leq 1  \,.
\end{cases}
\eqnx

From the above example, we have the following corollary.

{
\begin{Corollary}\label{cor:1}

{ \color{black} For $d > 2$,} let us consider $\partial D_{\delta} $ as a $\delta$-perturbation of $\partial D = R_0 \mathbb{S}^{d-1}$ along the direction $\tilde{h}  \in C^{ {\color{black} 4 },\alpha} (\partial D)$ with {\color{black}$\| \tilde{h}  \|_{\mathcal{C}^1} <1$ for sufficiently small $\delta\in\mathbb{R}_+$}.  
Then for $h \in \mathcal{C}^{ {\color{black} 3 },\alpha} (\partial D_{\delta})$ with $\| h \|_{\mathcal{C}^1} <1$, considering an $\varepsilon$-perturbation of $\partial D_{\delta} $ along the direction $h$, namely $( \partial D_{\delta} )_{\varepsilon} $, we have
\beqnx
&  & | [ \text{\rm Proj}_{V_{s,\partial D_{\delta} } } (h H) ] (x) | \notag \\
&\leq&
\begin{cases}
  \lambda^{-1}  C_d \frac{ R_0^s +  \varepsilon C_d  R_0^{ \frac{d-1}{2} } (s + d-2)^2  R_0^{s+ 1 } }{  1 -  \varepsilon C_d R_0^{ \frac{d-1}{2} }   (s + d-2)^2  R_0^{s+ 1 }   }  \, \| \mathcal{M}^{(1)}_{L,M} (\lambda, D , h)  \|_{\mathcal{L} (V_{s, \mathbb{S}^{d-1} }, V_{s, \mathbb{S}^{d-1} })}    & \text{ if } R_0 \geq 1, \\
  \lambda^{-1}  C_d \frac{ 1 +   \varepsilon C_d  R_0^{ \frac{d-1}{2} }\,  ( R_0^{-1} +  d )  }{  R_0^s   -   \varepsilon C_d   R_0^{ \frac{d-1}{2} } \, ( R_0^{-1} +  d )  } \, \| \mathcal{M}^{(1)}_{L,M} (\lambda, D , h)  \|_{\mathcal{L} (V_{s, \mathbb{S}^{d-1} }, V_{s, \mathbb{S}^{d-1} })}  & \text{ if } R_0 \leq 1 .
\end{cases}
\eqnx
Similarly,
{
\beqnx
&  & | [ \text{\rm Proj}_{V_{s,\partial D_{\delta}} } (h H) ] (x) | \notag \\
&\leq&
\begin{cases}
  \lambda^{-1}  C_d \frac{R^{2s + d -2}}{c_{d,s}}  \frac{ R_0^s +  \varepsilon C_d  R_0^{ \frac{d-1}{2} } (s + d-2)^2  R_0^{s+ 1 }  }{  1 -   \varepsilon C_d  R_0^{ \frac{d-1}{2} }   (s + d-2)^2 R_0^{s+ 1 }    }\times \\
\  \left \|
\frac{\partial }{\partial \varepsilon} \left( \left \langle Y_L(\omega_x)  \, , \, \bigg(u_{( \partial D_{\delta} ) _\varepsilon} -  r^{n} Y_M(\omega) \bigg) (R \, \omega_x ) \right \rangle_{ L^2( R \mathbb{S}^{d-1} , d \omega_x ) }   \right)
\right \|_{\mathcal{L} (V_{s, \mathbb{S}^{d-1} }, V_{s, \mathbb{S}^{d-1} })}   & \text{ if } R_0 \geq 1, \\
  \lambda^{-1}  C_d  \frac{R^{2s + d -2}}{c_{d,s}} \frac{ 1 +  \varepsilon C_d  R_0^{ \frac{d-1}{2} }\,  ( R_0^{-1} +  d )  }{  R_0^s   -  \varepsilon C_d  R_0^{ \frac{d-1}{2} } \, ( R_0^{-1} +  d )  } \times\\
 \ \left \|
\frac{\partial }{\partial \varepsilon} \left( \left \langle Y_L(\omega_x)  \, , \, \bigg(u_{ ( \partial D_{\delta} ) _\varepsilon} -  r^{n} Y_M(\omega) \bigg) (R \, \omega_x ) \right \rangle_{ L^2( R \mathbb{S}^{d-1} , d \omega_x ) }   \right)
\right \|_{\mathcal{L} (V_{s, \mathbb{S}^{d-1} }, V_{s, \mathbb{S}^{d-1} })}  & \text{ if } R_0 \leq 1 ,
\end{cases}
\eqnx
for some dimensionality constants $C_d$.
}
\end{Corollary}
}

\begin{proof}
For a given resolution $s \in \mathbb{N}$, we have, {\color{black} applying Theorem \ref{all_GPT} }
\beqnx
&  & | [ \text{Proj}_{V_{s,\partial D_{\delta}} } (h H) ] (x) | \notag \\
& = & \left | \text{inv}_3 \circ \text{inv}_2 \circ \text{inv}_1 \left[  \left(  \mathcal{M}^{(1)}_{L,M} (\lambda, D , h) \right)_{L\in I_k, M \in I_n, k,n \leq s}  \right ] \right| \\
&\leq &  \lambda^{-1} C_d'  \, \kappa( U_{L , p , \partial D_{\delta} } |_{\mathcal{L} ( V_{s,\partial D_{\delta} }, W_{s,\partial D_{\delta} } ) }  )    \limsup_{t \rightarrow \infty} \|  \chi_x\, e^{i t \varphi_{x,\xi} } \|_{L^2(\partial S, d \sigma )}  \, \| \mathcal{M}^{(1)}_{L,M} (\lambda, D , h)  \|_{\mathcal{L} (V_{s, \mathbb{S}^{d-1} }, V_{s, \mathbb{S}^{d-1} })} ,
\eqnx
for some constant $C_d'$. The results follow from the computations in Example 2 and \eqref{eq:gpt2}.
\end{proof}

{\color{black}
\begin{Remark}
It is noted that in the above example we assume $ \tilde{h} \in C^{ {\color{black} 4 },\alpha} $ and $h \in C^{ {\color{black} 3 },\alpha} $ but we only require $ \| h \|_{\mathcal{C}^1},  \| \tilde{h} \|_{\mathcal{C}^1} <1$.  This allows flexibility to embrace non-convex $\partial D_{\delta}$.
\end{Remark}
}

{\color{black}
\begin{Remark}
For this example, we can see that the reconstruction of $h(x)$ from $ \mathcal{M}^{(1)}_{L,M} (\lambda, D , h)$ is more stable with points of high mean curvature $|H(x)|^2$ if $D$ is not too far away from $R_0 \mathbb{S}^{d-1}$. In fact, the growth of the norm of the linearized inverse (corresponding to the reconstruction of $H(x) h(x)$ from $\mathcal{M}^{(1)}_{L,M}(\cdot,\cdot, h)$) with respect to the restriction of the Laplacian eigenspaces (corresponding to the resolution along the surface) is quite uniform across the domains $D$. Hence if $H(x)$ is large, $h(x)$ can be clearly seen; whereas if $H(x)$ is small, the reconstruction of $h(x)$ becomes more unstable since one needs to additionally divide the reconstructed $H(x) h(x)$ by $H(x)$.

This can be more explicitly explained as follows. If we have $\bigg( \left(u- r^{n} Y_M(\omega) \right)^{\text{meas}}|_{\Gamma} \bigg)_{M \in I_n, n \in \mathbb{N}  }$, one can obtain 
$ \bigg( ( \mathcal{M}_{L,M} )^{\text{meas}} (\lambda, D_{\text{exact}} ) \bigg)_{L\in I_k, M \in I_n, k,n \in \mathbb{N}} $. 
{\color{black} Then one may reconstruct $D$ via the following Newton type iteration for solving the equation
\[
 \mathcal{M}_{L,M} (\lambda, D^n ) = ( \mathcal{M}_{L,M} )^{\text{meas}} (\lambda, D_{\text{exact}} )
\]
which give $(D^{n} , h^{n} )$, $n=1,2,\ldots$ recursively} as follows:
\begin{equation}\label{eq:alg1}
\begin{split}
 \mathcal{M}^{(1)}_{L,M} (\lambda, D^n , h^{n})  := &  ( \mathcal{M}_{L,M} )^{\text{meas}} (\lambda, D_{\text{exact}} ) - \mathcal{M}_{L,M} (\lambda, D^n )  \, , \\
\partial D^{n+1} :=& \{ x + {\color{black} h^n}(x) \nu^n(x) \, : \, x \in \partial D^n  \} \, ,
\end{split}
\end{equation}
where $\nu^n$ denotes the unit normal vector on $\partial D^n$.  As we can see, the reconstruction step for $h^n$ is now more stable with points of high mean curvature $|H(x)|^2$ when $D^n$ starts to move close to $(\lambda, D_{\text{exact}})$, {\color{black}
with the inversion step $ \mathcal{M}^{(1)}_{L,M} (\lambda, D^n , h^{n})  \mapsto H(x) h^{n}(x)$ has a bound uniform with $x \in \partial D$ as in Corollary \ref{cor:1}, indicating that the inversion step $ \mathcal{M}^{(1)}_{L,M} (\lambda, D^n , h^{n})  \mapsto h^{n}(x)$ is more stable when $|H(x)|^2$ is high.
}
In two remarks below, we shall further discuss how Corollary \ref{cor:1} helps to develop a more stable Newton-type method.
\end{Remark}
}

\begin{Remark}
It is remarked that the recovery of $h^n$ in \eqref{eq:alg1} shall be numerically performed {\color{black} via the explicit numerical computation of $ \mathcal{M}^{(1)}_{L,M} (\lambda, D, \cdot ) $ from \eqref{seriesvariation} and \eqref{variationvariation} with the choice of a quadrature rule} instead of $ \text{inv}_3 \circ \text{inv}_2 \circ \text{inv}_1$ in \eqref{invinvinv} and division by $H(x)$.  The composition of the three operators is considered only for the theoretical analysis of sensitivity, and is not ideal to perform numerically. 
\end{Remark}

\begin{Remark}
It is emphasized that we did not claim that
\beqnx
h \mapsto \left(  \left[ \bigg(u -  r^{n} Y_M(\omega) \bigg) (R \, \omega_x ) \right] (h) \right)_{M \in I_n, n \in \mathbb{N}}
\eqnx
has a bounded inverse, but only that
\beqnx
[ \text{\rm Proj}_{V_{s,\partial D} } (h H) ] (x) \mapsto \left(  \mathcal{M}^{(1)}_{L,M} (\lambda, D,h ) \right)_{L \in I_k, M \in I_n, k, n  \leq s }   
\eqnx
has a bounded inverse for each $s$, considering the fact that the inverse problem to reconstruct $D$ from the scattered fields is exponentially ill-posed as is shown in \cite{curv_Liu_2,CR} and indicated by the decay order of $ \mathcal{M}_{L,M} (\lambda, D ) $. 
\end{Remark}

{ \color{black}

\begin{Remark}
Stable optimization methods have been discussed and employed for the shape reconstruction from GPTs; see e.g \cite{gpt,mono_1,mono_2,spectral}. In what follows, we describe a numerically more stable Newton-type method instead of \eqref{eq:alg1} to elucidate how our theoretical understanding helps precondition the computation of the Gauss-Newton directions (as suggested by Corollary \ref{cor:1}). To be specific, let us consider the inverse problem of reconstructing a target shape $D_{\text{exact}}$ from the noisy measurement of the scattered fields by first taking their Fourier coefficients to obtain the GPTs, in such a way that GPTs are taken up to order $k$, i.e. $ \left ( ( \mathcal{M}_{L,M} )^{\text{meas}} (\lambda, D_{\text{exact}} ) \right)_{L \in I_r, M \in I_s, r, n \leq k }  $. The signal-to-noise ratio provides the error bound for all $L \in I_r, M \in I_s, r, n \leq k $,
\beqn
|( \mathcal{M}_{L,M} )^{\text{meas}} (\lambda, D_{\text{exact}} ) -  \mathcal{M}_{L,M} (\lambda, D_{\text{exact}} ) | < \text{err} \,.
\label{error}
\eqn
Here, we refer to \cite{noise,phaseless,book,resol1} for more relevant details on the choice of $k$. Write $d_k := |\{ (L,M) : L \in I_r, M \in I_s, r, n \leq k  \} |$.  

We consider the following minimization problem (without regularization for simplicity) for any given shape $D$:
\beqn
J(D) &:= &\frac{1}{2} \sum_{L \in I_r, M \in I_s, r, n \leq k }  |  \mathcal{M}_{L,M} (\lambda, D )  - ( \mathcal{M}_{L,M} )^{\text{meas}} (\lambda, D_{\text{exact}} ) |^2  \notag \\
&:=& \frac{1}{2} \|  \mathcal{M} (\lambda, D )  - \mathcal{M}^{\text{meas}} (\lambda, D_{\text{exact}} ) \|^2_{\mathbb{C}^{d_k}} \,.
\label{optimization}
\eqn
For a given shape $D^n$, we write $ \left ( \mathcal{M}^{(1)}_{L,M} (\lambda, D^n , h )  \right)_{L \in I_r, M \in I_s, r, s  \in \mathbb{N} }    =  A_n [ H^n(x) h(x) ] $,  
where $H^n(x)$ denotes the mean curvature of $D^n$. We notice that $v \mapsto A_n [v]$ can be computed explicitly for the numerical purpose either via first a division of $H^n(x)$ and then applying $\mathcal{M}^{(1)}_{L,M} (\lambda, D^n , \cdot) $ from \eqref{seriesvariation} and \eqref{variationvariation} or directly taking a difference quotient as 
\[
A_n [v] \approx \frac{1}{\tilde{\epsilon}} \left[ \mathcal{M} (\lambda, ( D^n )^{\tilde{\epsilon} } ) - \mathcal{M} (\lambda, D^n ) \right], 
\]
where $( \partial D^n )^{\tilde{\epsilon} } := \{ x + \tilde{\epsilon} \frac{v(x)}{H^n(x)} \nu(x) : x \in \partial D^n \}$ for a small $\tilde{\epsilon}$.  With this definition of $A_n$, we now consider the following preconditioned Gauss-Newton descent iterative sequence:
\begin{equation}
\begin{split}
v^n \in &\  \mathrm{argmin}_{v \in V_{k,\partial D^n} } \, \frac{1}{2} \| A_n v -  \mathcal{M}^{\text{meas}} (\lambda, D_{\text{exact}} ) + \mathcal{M} (\lambda, D^n )   \|^2_{\mathbb{C}^{d_k}}  \, , \\
D^{n+1} :=& \left \{ x + \alpha_n \frac{{\color{black} v^n}(x)}{H^n(x)} \nu^n(x) \, : \, x \in D^n  \right \} \, ,
\end{split}
\label{iteration}
\end{equation}
where $\alpha_n$ can be chosen via any appropriate means, say a line searching \cite{Source}. Here, it is remarked that one may instead consider a regularized version, say the Levenberg-Marquardt algorithm \cite{Newton1, Newton2} with the regularization parameter following a discrepancy principle \cite{Source}, but for simplicity, let us only consider \eqref{iteration}. The computation of $v^n$ in the first step in \eqref{iteration} can be done via a descent-type method or a conjugate gradient method restricted on the subspace $V_{k,\partial D^n}$. To proceed, let us denote
\[
A_{n,k} :=  A_n |_{   \mathcal{L} ( V_{k, \partial D^n } , \mathbb{C}^{d_k} )   } \,.
\]
Suppose $D_{\text{exact}}$ and $D^n$ are both $\mathcal{C}^1$ $\varepsilon$-perturbations of $R_0 \mathbb{S}^{d-1}$ for $\varepsilon < \varepsilon_0$ in such a way that the perturbation directions are also in $\mathcal{C}^{3,\alpha}$. Then from Corollary \ref{cor:1} and the equivalence of norms between the finite dimensional spaces $\mathbb{C}^{d_k}$ and $\mathcal{L} (V_{k, \mathbb{S}^{d-1} }, V_{k, \mathbb{S}^{d-1} } )$, we have
\[
 \|  A_{n,k} \|  \geq C_{\varepsilon_0} \frac{ C_{d} }{ d_k} \left( R_0 + \frac{1}{R_0} \right)^{-2k}   := \frac{ C_{\varepsilon_0,d} }{ d_k}  \left( R_0 + \frac{1}{R_0} \right)^{-2k}  \,.
\]
Therefore $A_{n,k} $ is invertible, and moreover the condition number of $A_{n,k} $ is independent of the choice of $D^n$'s and their curvature.
The minimization in the linearized step in \eqref{iteration} is numerically stable to compute if $k$ is not huge, and this is uniform across the iteration steps. 
{ \color{black}
Further discussion of our choice of the Gauss-Newton method, e.g. the descent property of the Gauss-Newton direction, is deferred to the Appendix.}
Instead of the projection to $V_{k,\partial D^n}$ as in \eqref{iteration}, more sophisticated regularization choices of the Gauss-Newton method can be introduced, and their convergence (and their optimal convergence rate) under a source condition are discussed in e.g. \cite{Source}. Detailed numerical studies of the method can be found in e.g. \cite{numerical_Gauss_Newton}.

\end{Remark}

}

\begin{Remark}
We remark that the mechanism of detecting geometric singularities in \cite{Maj} is of high frequency nature, while our analysis is in the low frequency regime.  
High resolution boundary information of the inclusion only enters scattered fields as high order GPTs or SCs as will be shown in the next section, which decay exponentially.   This is consistent with the exponential ill-posedness results \cite{curv_Liu_2,CR,LRX,LPRX}, that high resolution information is more prone to the noise contamination. 
However, if we have large curvature points, higher order GPTs and SCs, albeit still exponentially decaying, will be pushed up to a high magnitude.  If a perturbation is further applied, fine details of the perturbation near a high curvature point will be amplified in the far field and easily reconstructable (c.f. Theorem \ref{all_GPT} and Corollary \ref{cor:1}.)

We can describe the above understanding using an example in two dimensions. For instance, we may take a shape $D_1$ coming as an $\varepsilon_0$-perturbation of a disk $D_0$ along a direction $\tilde{h} = h/ \| h\|$,
where
\[
h := \sum_{k=-N}^N C^{|k|} e^{i k \theta} = 1 + 2 \frac{ 1 - C \cos (\theta) + C^{N+1} \cos ( (N-1) \theta) - C^{N} \cos (N \theta) } {1 + C^2} ,
\]
for a fixed small $\varepsilon_0$ and a fixed large $N$ where $C > 1$, which is highly irregular around the point zero if $N$ is very large.  As we expect, the resulting object has the property that the corresponding GPT with order $(m,n)$ has a decaying order of $\delta_{mn} (R/C)^{2n} + \varepsilon_0 (R/C)^m (R/C)^n + O( \varepsilon_0^2) $. We see that if $C$ is comparable with $R$, the decay is less rapid (up till order $N$), and hence the high frequency information of the boundary inclusion enters the scattered field more stably.  
If we further perturb $D_1$ to $D_2$ as a $\delta$-perturbation of $D_1$, with the fact that the geometry of $D_1$ has already made GPT with order $(m,n)$ where $|m|,|n| < N$ have a relatively larger magnitude, the scattered measurements projected onto these components are no longer hindered by noise and propagate to the far field.  Therefore, the perturbations from $D_1$ to $D_2$ can be better detected from the far-field measurements.
\end{Remark}

\section{Transmission Helmholtz problem in the low frequency regime and its sensitivity analysis}  \label{sec3}

In this section, we consider the Helmholtz transmission problem. {\color{black} In what follows, for given positive constants $\epsilon_0, \mu_0, \epsilon_1 , \mu_1, \varpi$, we set $k_0:= \varpi \sqrt{ \epsilon_0 \mu_0} $ and $k_1:=\varpi \sqrt{ \epsilon_1 \mu_1} $. The transmission problem is given as follows for $u\in H_{loc}^1(\mathbb{R}^d)$:}
\beqn
    \begin{cases}
        \nabla \cdot (\frac{1}{\mu_{D} } \nabla u) + {\color{black}\varpi^2} \epsilon_{D} u = 0 &\text{ in }\; \mathbb{R}^d, \\[1.5mm]
         (\frac{\partial}{\partial |x| } - i k_0 ) (u - u_0) = o(|x|^{ - \frac{d-1}{2}}) &\text{ as }\; |x| \rightarrow \infty,
    \end{cases}
    \label{transmissionk}
\eqn
where 
$\mu_{D}  = \mu_1 \chi(D) +  \mu_0 \chi(\mathbb{R}^d \backslash \overline{D})$, 
$\color{black}\epsilon_{D}  = \epsilon_1 \chi(D) +  \epsilon_0 \chi(\mathbb{R}^d \backslash \overline{D})$
and $u_0 $ satisfies $( \Delta + k_0^2 ) = 0$. In \eqref{transmissionk}, $\varpi\in\mathbb{R}_+$ denotes the operating frequency. Throughout the rest of our study, we consider the case that $\varpi \ll 1$, or equivalently $k_0\ll 1$. This is refered to as the quasi-static regime. It is in fact an important regime regarding the fact that when $\varpi$ is small, the resolution of the corresponding inverse problem {\color{black} of
recovering the shape of the inclusion from the scattering data} is considerably poor {\color{black} (cf. \cite{resol1,ammari2004reconstruction})}.

{\color{black}
In what follows, we present the description of the general potential theory and the Neumann-Poincar\'e operator (cf. \cite{CK_book,regularity5,kellog, folland, mcowen,shapiro}).}
For a given ${\tilde{k}}\in\mathbb{R}_+$, we introduce
\beqn
    \mathcal{S}^{\tilde{k}}_{\partial D} [\phi] (x) &:=&  \int_{\partial D} G_{\tilde{k}}(x-y) \phi(y) d \sigma(y) ,\\
  \mathcal{D}^{\tilde{k}}_{\partial D} [\phi] (x) &:=& \int_{\partial D} \frac{\partial }{\partial \nu_y } G_{\tilde{k}}(x-y) \phi(y) d \sigma(y) ,
\eqn
for $x \in \mathbb{R}^d\backslash\partial D$ with $d \geq 2$, where $G_{\tilde{k}}$ is the fundamental solution of the Helmholtz equation with outgoing radiation condition in $\mathbb{R}^d$ as follows:
\beqn
    G_{\tilde{k}} (x-y) = C_{{\tilde{k}},d} ( {\tilde{k}}|x-y| )^{- \frac{d-2}{2}} H^{(1)}_{\frac{d-2}{2}}({\tilde{k}}|x-y|) ,
    \label{fundamental2}
\eqn
with $C_{{\tilde{k}},d}$ some constant depending only on $d$, and $H^{(1)}_{\frac{d-2}{2}}$ is the Hankel function of the first kind and order $(d-2)/2$.  
It is known that the single-layer potential $\mathcal{S}^{\tilde{k}}_{\partial D}$ satisfies the following jump condition on $\partial D$:
\beqn
    \f{\p}{\p \nu} \left(  \mathcal{S}^{\tilde{k}}_{\partial D} [\phi] \right)^{\pm} = (\pm \f{1}{2} I + {\mathcal{K}^{\tilde{k}}_{\partial D}}^* )[\phi]\,,
    \label{jump_condition2}
\eqn
where the superscripts $\pm$ indicate the limits from outside and inside $D$ respectively, and
${\mathcal{K}^{\tilde{k}}_{\partial D}}^*: L^2(\partial D) \rightarrow L^2(\partial D)$ is the Neumann-Poincar\'e operator defined by
\beqn
    {\mathcal{K}^{\tilde{k}}_{\partial D}}^* [\phi] (x) := \int_{\partial D} \partial_{\nu_x} G_{\tilde{k}}(x-y) \phi(y) d \sigma(y) \, .
    \label{operatorK2}
\eqn

With the above preparations, $u\in H_{loc}^1(\mathbb{R}^d)$ in \eqref{transmissionk} can be given by
\beqnx
u = 
\begin{cases}
u_0 + \mathcal{S}^{k_0}_{\partial D} [\psi] & \text{ on } \mathbb{R}^d \backslash \overline{D}, \\
\mathcal{S}^{k_1}_{\partial D} [\phi]  & \text{ on }D,
\end{cases}
\eqnx
where $(\phi,\psi) \in L^2 (\partial D) \times L^2 (\partial D)$ is the unique solution to (provided that $k_1^2$ is not a Dirichlet eigenvalue of the Laplacian in $D$)
\beqnx
\begin{cases}
\mathcal{S}^{k_1}_{\partial D} [\phi]  - \mathcal{S}^{k_0}_{\partial D} [\psi]  = u_0 \\
\frac{1}{\mu_1}( - \f{1}{2} I + {\mathcal{K}^{k_1}_{\partial D}}^* )[\phi] - \frac{1}{\mu_0}( \f{1}{2} I + {\mathcal{K}^{k_0}_{\partial D}}^* ) [\psi] =  \frac{1}{\mu_0}   \f{\p u_0}{\p \nu}   \,,
\end{cases}
\eqnx
or that
\beqnx
& &\left\{   \f{1}{2}  \left( \frac{1}{\mu_0} I + \frac{1}{\mu_1} \, \left( \mathcal{S}^{k_1}_{\partial D} \right)^{-1}  \mathcal{S}^{k_0}_{\partial D} \right)
+
\frac{1}{\mu_0} {\mathcal{K}^{k_0}_{\partial D}}^* 
- \frac{1}{\mu_1} {\mathcal{K}^{k_1}_{\partial D}}^*   \left( \mathcal{S}^{k_1}_{\partial D} \right)^{-1}  \mathcal{S}^{k_0}_{\partial D}
 \right\} [\psi] \\
&=&    \frac{1}{\mu_1}( - \f{1}{2} I + {\mathcal{K}^{k_1}_{\partial D}}^* ) \circ \left(\mathcal{S}^{k_1}_{\partial D} \right)^{-1} \left[ u_0 \right]  -  \frac{1}{\mu_0}  \f{\p u_0}{\p \nu}   \\
&=&   \left( \frac{1}{\mu_1} -  \frac{1}{\mu_0}  \right) \f{\p u_0}{\p \nu}  \, .
\eqnx
{\color{black}
where the inversion of $\mathcal{S}^{k_1}_{\partial D}$ is understood as the inversion of the operator $\mathcal{S}^{k_1}_{\partial D}: L^2(\partial D, d \sigma) \rightarrow L^2(\partial D, d \sigma)$ instead of $\mathcal{S}^{k_1}_{\partial D}: L^2(\partial D, d \sigma) \rightarrow H^1_{loc}(\mathbb{R}^d)$,}
As in \cite{new_exp}, we can now write, for all $x$ outside $D$
\begin{equation}\label{eq:rep1}
\begin{split}
u  - u_0 = & \left( \frac{1}{\mu_1} -  \frac{1}{\mu_0}  \right) \mathcal{S}^{k_0}_{\partial D} \circ \bigg\{   \f{1}{2}  \left( \frac{1}{\mu_0} I + \frac{1}{\mu_1} \, \left( \mathcal{S}^{k_1}_{\partial D} \right)^{-1}  \mathcal{S}^{k_0}_{\partial D} \right)\\
&+
\frac{1}{\mu_0} {\mathcal{K}^{k0}_{\partial D}}^* 
- \frac{1}{\mu_1} {\mathcal{K}^{k_1}_{\partial D}}^*   \left( \mathcal{S}^{k_1}_{\partial D} \right)^{-1}  \mathcal{S}^{k_0}_{\partial D}
 \bigg\}^{-1}  \left[ \f{\p u_0}{\p \nu}   \right ]
 \end{split}
\end{equation}
where the inverse in the equation exists by the Fredholm alternative theorem, {\color{black} and the first operator $ \mathcal{S}^{k_0}_{\partial D} $ in \eqref{eq:rep1} is understood as $\mathcal{S}^{k_0}_{\partial D}: L^2(\partial D, d \sigma) \rightarrow H^1_{loc}(\mathbb{R}^d \backslash D)$.}

From the following asymptotics as $z\rightarrow+0$ {\color{black}(cf. \cite{Kor02})},
\begin{equation}\label{eq:as1}
J_{\alpha}(z) = \frac{1}{\Gamma(\alpha +1)} \left(\frac{z}{2}\right)^\alpha+ O(z^{\alpha+2})  \quad \text{ and }
\quad
Y_{\alpha}(z) =
 \begin{cases}
\frac{2}{\pi} \left(  \log \left( \frac{z}{2} \right) + \gamma \right) + O(z)   \\
 \frac{\Gamma(\alpha +1)}{\pi} \left(\frac{z}{2}\right)^{-\alpha}+ O(z^{ - \alpha+2}) 
\end{cases},
\end{equation}
{\color{black}
where $J_\alpha$ is the Bessel function of first kind and $Y_\alpha$ is the Bessel function of second kind,
}we have
\beqnx
G_{\tilde{k}} (x-y) =
\begin{cases}
C_{{\tilde{k}}}  \left[ C_1  \log \left(  {\tilde{k}}|x-y| \right)  + C_2 + O\left({\tilde{k}}|x-y|\right) \right]  & \quad \text{ if } d = 2,\medskip\\
C_{{\tilde{k}},d} \left[ {\tilde{k}}^{2-d}  |x-y|^{2-d}  + O\left( {\tilde{k}}^{4 -d} |x-y|^{4 -d} \right) \right] & \quad \text{ if } d > 2 \, ,
\end{cases}
\eqnx
where $C_d$ only depends on $d$ and $C_2$ is another constant.   Since $\mathcal{S}_{\partial D} $ and $ {\mathcal{K}_{\partial D}}^*$ are both of order $-1$, we have the following lemma as in \cite{new_exp} together with tracing back the regularity of the coefficients in the operators with \eqref{regularity}.
\begin{Lemma} \label{quick1}
We have the following decompositions for the boundary potential operators, 
\beqnx
    \mathcal{S}^{\tilde{k}}_{\partial D} =  \mathcal{S}_{\partial D}  + \varpi^2 \, \mathcal{S}^{\tilde{k}}_{\partial D,-3}  \quad \text{ and }\quad 
    {\mathcal{K}^{\tilde{k}}_{\partial D}}^*  =  {\mathcal{K}_{\partial D}}^* +  \varpi^2  \,  \mathcal{K}^{\tilde{k}}_{\partial D,-3}  \,,
\eqnx
where $ \mathcal{K}^{\tilde{k}}_{\partial D,-3} , \mathcal{S}^{\tilde{k}}_{\partial D,-3} $ are uniformly bounded w.r.t. $\varpi$ and are of order  $-3$, and in the sense of $ \mathcal{C}^{k-3,\alpha} \,  \Phi \text{SO}^{-3} $ when $\partial D \in \mathcal{C}^{k,\alpha}$ for $k \geq 4$.
\end{Lemma}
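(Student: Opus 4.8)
The plan is to derive both decompositions from the small-argument expansion of the Helmholtz fundamental solution $\Gamma_k(x-y)$, using the Bessel asymptotics \eqref{eq:as1}, by subtracting off the electrostatic kernel $G(x-y)$ and then showing that what remains is, after extracting the prefactor $\omega^2$, a pseudodifferential operator of order $-3$ bounded uniformly in $\omega$. Two structural facts are all that is needed: $k=\omega\sqrt{\varepsilon_j\mu_j}$ enters $\Gamma_k$ only through $k^2$ (modulo a logarithm of $k$ in even dimension and, in odd dimension, density-independent lower-order-in-$k$ pieces), and $\partial D\in\mathcal{C}^{2,\alpha}$, which yields the quadratic cancellation $\langle x-y,\nu(x)\rangle=O(|x-y|^2)$ for $x,y\in\partial D$.

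Concretely, for $\mathcal{S}^k_{\partial D}$ I would fix the normalisation $C_{k,d}$ so that the leading term of $\Gamma_k$ reproduces $G$, and write
\[
\Gamma_k(x-y)=G(x-y)+\eta_{k,d}+k^2\,\Gamma^{\mathrm{rem}}_{k,d}(x-y),
\]
where $\eta_{k,d}$ is a density-independent piece collecting the $\log k$ term (even $d$) and the sub-quadratic powers of $k$ (odd $d$), and $\Gamma^{\mathrm{rem}}_{k,d}(x-y)$ has leading behaviour $|x-y|^{4-d}$, respectively $|x-y|^2\log|x-y|$ for even $d$, with all further terms more regular and with coefficients analytic in $k^2$ up to an overall logarithm of $k$. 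Since a convolution kernel with singularity $|x-y|^{-(d-4)}$ on the $(d-1)$-dimensional surface $\partial D$ is a pseudodifferential operator of order $-3$, two orders smoother than the (order $-1$) single-layer kernel, this identifies $\mathcal{S}^k_{\partial D,-3}$; using $k^2=\omega^2\varepsilon_j\mu_j$ one reads off $\mathcal{S}^k_{\partial D}=\mathcal{S}_{\partial D}+\omega^2\mathcal{S}^k_{\partial D,-3}$ with $\mathcal{S}^k_{\partial D,-3}$ bounded uniformly on bounded $\omega$-intervals. The constant, rank-one contribution of $\eta_{k,d}$ is smoothing, and I would dispose of it exactly as in \cite{new_exp}, where its effect inside the representation \eqref{eq:rep1} is tracked directly.

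For ${\mathcal{K}^k_{\partial D}}^*$ I would apply $\partial_{\nu_x}$ to the same expansion: the static part $\partial_{\nu_x}G(x-y)$ is precisely the kernel of $\mathcal{K}^*_{\partial D}$, and $\partial_{\nu_x}\eta_{k,d}=0$, so here the correction starts cleanly at order $\omega^2$. Writing $\partial_{\nu_x}|x-y|^{4-d}=(4-d)|x-y|^{2-d}\langle x-y,\nu(x)\rangle$ and invoking the cancellation $\langle x-y,\nu(x)\rangle=O(|x-y|^2)$ valid on a $\mathcal{C}^{2,\alpha}$ boundary, the correction kernel again behaves like $|x-y|^{4-d}$ and hence defines an operator of order $-3$; the same reasoning in $k^2=\omega^2\varepsilon_j\mu_j$ gives uniform boundedness in $\omega$, so ${\mathcal{K}^k_{\partial D}}^*={\mathcal{K}_{\partial D}}^*+\omega^2\mathcal{K}^k_{\partial D,-3}$. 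Alternatively, one may simply quote the full low-frequency expansions of $\mathcal{S}^k_{\partial D}$ and ${\mathcal{K}^k_{\partial D}}^*$ already established in \cite{new_exp} and match the orders.

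The main obstacle is the bookkeeping in even space dimensions: there the logarithm of $k$ forces one to peel off the density-independent piece $\eta_{k,d}$ before a clean $\omega^2$ remainder emerges, and then to check that this is compatible with the later use of the decomposition inside \eqref{eq:rep1} (there, acting on the relevant densities, the offending term either cancels between the $\mu_0$ and $\mu_1$ contributions or is annihilated by the normal derivative carried by ${\mathcal{K}^k_{\partial D}}^*$). Everything else reduces to a Taylor expansion of $\Gamma_k$ together with the standard mapping properties of weakly singular integral operators on a $\mathcal{C}^{2,\alpha}$ surface.
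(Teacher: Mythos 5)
Your proposal follows the same route the paper intends: expand the Helmholtz kernel $\Gamma_k$ by the small-argument Bessel asymptotics \eqref{eq:as1}, peel off the electrostatic kernel $G$, and read off the order of the remainder from the behaviour $|x-y|^{4-d}$ of its leading singular part on the $(d-1)$-dimensional surface; the paper itself essentially stops at the displayed expansion of $\Gamma_k$ and defers to \cite{new_exp} for the lemma. Your treatment of ${\mathcal{K}^k_{\partial D}}^*$ — applying $\partial_{\nu_x}$, using $\langle x-y,\nu(x)\rangle=O(|x-y|^2)$ on a $\mathcal{C}^{2,\alpha}$ boundary, and noting that the density-independent constant is annihilated by the normal derivative so the correction there genuinely starts at $O(\omega^2)$ — is exactly the right argument and is correct.

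The genuinely non-trivial observation in your write-up, and the one the paper glosses over, is the $\eta_{k,d}$ term. For $d=3$, $\Gamma_k - G = -\tfrac{ik}{4\pi} + \tfrac{k^2|x-y|}{8\pi} + \cdots$, so $\mathcal{S}^k_{\partial D} - \mathcal{S}_{\partial D}$ contains a rank-one, smoothing, but $O(\omega)$ (not $O(\omega^2)$) piece $\phi\mapsto -\tfrac{ik}{4\pi}\int_{\partial D}\phi\,d\sigma$ (and for $d=2$ an $O(\log\omega)$ constant). As you note, this cannot be absorbed into an $\omega^2\,\mathcal{S}^k_{\partial D,-3}$ with $\mathcal{S}^k_{\partial D,-3}$ uniformly bounded; indeed, the paper's displayed expansion of $\Gamma_k$ for $d>2$ simply omits the $O(k^{3-d}|x-y|^{3-d})$ term that produces it. Your proposed remedies — restricting to mean-zero densities, tracking the rank-one piece explicitly inside the representation \eqref{eq:rep1} as in \cite{new_exp}, or observing that it vanishes under $\partial_\nu$ for the Neumann--Poincar\'e part — are the standard ways to handle it, and are what \cite{new_exp} actually does. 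In short: you have reproduced (and filled in) the paper's sketch, and you have been more careful than the statement itself on the single-layer side; the lemma as literally phrased is only clean for ${\mathcal{K}^k_{\partial D}}^*$, while for $\mathcal{S}^k_{\partial D}$ the $\omega^2$ decomposition holds modulo this explicit rank-one $O(\omega)$ (resp. $O(\log\omega)$) correction that must be accounted for downstream.
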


Next, by following the same arguments as those for establishing Theorem~\ref{theoremK}, one can perturb $\partial D$ along the normal direction $\nu$, which in turn gives the perturbations of the boundary potential operators, being pseudo-differential operators with one order higher than the respective original operators.  We actually have the following result. 

\begin{Corollary}  \label{quick2}
For all ${\tilde{k}} \in\mathbb{R}_+$, $N \in \mathbb{N}$, there exists a constant $C$ depending only on $N$, $||\mathbb{X}||_{\mathcal{C}^2}$ and $||h||_{\mathcal{C}^1}$ such that
the following estimate holds for any $\widetilde{\phi} \in L^2 (\partial D_\varepsilon)$ and
$\phi := \tilde{\phi}\circ \Psi_\varepsilon$ {\color{black}(it is recalled that $\Psi_\varepsilon$ is the diffeomorphism taking $\partial D$ to $\partial D_\varepsilon$)}:
\beqnx
   \bigg|\bigg|  \mathcal{S}^{\tilde{k}}_{\partial D_{\varepsilon} } [ \tilde{\phi} \circ \Psi_\varepsilon ]  - \mathcal{S}^{\tilde{k}}_{\partial D}[{\phi} ] - \sum_{n=1}^N \varepsilon^n  \left(\mathcal{S}^{\tilde{k}}\right) ^{(n)}_{D,h} [\phi] \bigg|\bigg|_{L^2(\partial D)}
   \leq C \varepsilon^{N+1} ||\phi||_{L^2(\partial D)}\, ,\\
   \bigg|\bigg|  {\mathcal{K}^{\tilde{k}}_{\partial D_\varepsilon}}^*[ \tilde{\phi} \circ \Psi_\varepsilon ]  -  {\mathcal{K}^{\tilde{k}}_{\partial D}}^* [{\phi} ] - \sum_{n=1}^N \varepsilon^n  \left(\mathcal{K}^{\tilde{k}} \right) ^{(n)}_{D,h} [\phi] \bigg|\bigg|_{L^2(\partial D)}
   \leq C \varepsilon^{N+1} ||\phi||_{L^2(\partial D)}\,,
\eqnx
with
\beqnx
\left(\mathcal{S}^{\tilde{k}}\right) ^{(1)}_{D,h} = \left(\mathcal{S}^0\right) ^{(1)}_{D,h} + \varpi^2 \left(\mathcal{S}^{\tilde{k}}\right) ^{(1)}_{D,h,-2}  \quad \text{ and }\quad
\left(\mathcal{K}^{\tilde{k}}\right) ^{(1)}_{D,h} = \left(\mathcal{K}\right) ^{(1)}_{D,h} + \varpi^2 \left(\mathcal{K}^{\tilde{k}}\right) ^{(1)}_{D,h,-2}  \, ,
\eqnx
where $ \left(\mathcal{K}^{\tilde{k}}\right) ^{(1)}_{D,h,-2}  , \left(\mathcal{S}^{\tilde{k}}\right) ^{(1)}_{D,h,-3} $ are uniformly bounded w.r.t. $\varpi$ and are of order $-2$, and in the sense of $ \mathcal{C}^{k-3,\alpha} \,  \Phi \text{SO}^{-2} $ when $\partial D \in \mathcal{C}^{k,\alpha}$ and $h \in \mathcal{C}^{k-1,\alpha}$ when $k \geq 4$.
\end{Corollary}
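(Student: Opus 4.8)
The plan is to deduce this from Theorem~\ref{theoremK} together with Lemma~\ref{quick1}: once the small-frequency part of the Helmholtz fundamental solution has been peeled off, the shape-perturbation series for $\mathcal{S}^k_{\partial D}$ and ${\mathcal{K}^k_{\partial D}}^*$ is produced by running the normal-perturbation argument of Theorem~\ref{theoremK} separately on each kernel. First I would record, from the small-argument asymptotics \eqref{eq:as1}, that for $\omega$ --- hence $k$ --- in a fixed bounded set (the quasi-static regime) the kernel $\Gamma_k(x-y)$ of $\mathcal{S}^k_{\partial D}$ splits as $G(x-y)$ plus $\omega^2$ times the kernel of the operator $\mathcal{S}^k_{\partial D,-3}$ of Lemma~\ref{quick1}; this latter kernel is smooth in $x-y$ off the diagonal, depends smoothly on $\omega$, and near the diagonal behaves like $|x-y|^{4-d}$ when $d>2$ and like $|x-y|^2\log|x-y|$ when $d=2$, i.e. it is of order $-3$ on the $(d-1)$-dimensional surface $\partial D$, with all bounds uniform in $\omega$; differentiating in $\nu_x$ yields the analogous splitting of $\partial_{\nu_x}\Gamma_k(x-y)$ as $\partial_{\nu_x}G(x-y)$ plus $\omega^2$ times the kernel of $\mathcal{K}^k_{\partial D,-3}$, again of order $-3$ uniformly in $\omega$.

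Next I would reprise verbatim the geometry of the proof of Theorem~\ref{theoremK}. With $x=\mathbb{X}^\varepsilon(a)$, $y=\mathbb{X}^\varepsilon(b)$ and $\phi=\tilde\phi\circ\Psi_\varepsilon$, the surface measure, the outward normal and the Euclidean distance admit the $k$-independent, analytic-in-$\varepsilon$ Taylor expansions \eqref{Taylor1}, \eqref{Taylor2} and \eqref{Taylor3}. Composing each of the kernels of $\mathcal{S}_{\partial D}$, $\mathcal{S}^k_{\partial D,-3}$, $\mathcal{K}^*_{\partial D}$, $\mathcal{K}^k_{\partial D,-3}$ --- each a smooth function of the relative position off the diagonal with a homogeneous-type leading singularity --- with this analytic geometric data produces, exactly as in the derivation of the kernels $\mathbb{K}_{h,n}$, a convergent power series in $\varepsilon$ with explicitly computable coefficient kernels, each successive coefficient involving one more $\varepsilon$-differentiation of the geometry and hence raising the order of the singularity by at most one. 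Applying this to $\Gamma_k=G+\omega^2(\cdot)$ and collecting, the $\varepsilon$-expansion of $\mathcal{S}^k_{\partial D_\varepsilon}[\tilde\phi\circ\Psi_\varepsilon]$ is the $\varepsilon$-expansion generated by $G$ --- which by Theorem~\ref{theoremK} applied to the static single-layer operator is $\mathcal{S}_{\partial D}[\phi]+\sum_{n\ge1}\varepsilon^n(\mathcal{S}^0)^{(n)}_{D,h}[\phi]$ --- plus $\omega^2$ times the $\varepsilon$-expansion generated by the kernel of $\mathcal{S}^k_{\partial D,-3}$; reading off the coefficient of $\varepsilon$ gives $\left(\mathcal{S}^k\right)^{(1)}_{D,h}=\left(\mathcal{S}^0\right)^{(1)}_{D,h}+\omega^2\left(\mathcal{S}^k\right)^{(1)}_{D,h,-2}$, the last summand being of order $-2$ (one order above $-3$) and uniformly bounded in $\omega$. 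The identical argument applied to $\partial_{\nu_x}\Gamma_k=\partial_{\nu_x}G+\omega^2(\cdot)$, whose $G$-part reproduces $\mathcal{K}^*_{\partial D}[\phi]+\sum_{n\ge1}\varepsilon^n\mathcal{K}^{(n)}_{D,h}[\phi]$, yields $\left(\mathcal{K}^k\right)^{(1)}_{D,h}=\mathcal{K}^{(1)}_{D,h}+\omega^2\left(\mathcal{K}^k\right)^{(1)}_{D,h,-2}$ with $\left(\mathcal{K}^k\right)^{(1)}_{D,h,-2}$ of order $-2$ and $\omega$-uniformly bounded, and similarly for the higher coefficients.

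It then remains to prove the $L^2(\partial D)$ estimates, and here I would simply invoke the argument of Theorem~\ref{theoremK} twice, once for each generating kernel: truncating the $\varepsilon$-series at order $N$, the remainder is an integral operator whose kernel is controlled, uniformly for small $\varepsilon$, by the same quantities $\|\mathbb{X}\|_{\mathcal{C}^2}$, $\|h\|_{\mathcal{C}^2}$ and by the same mapping estimates that establish the bound in Theorem~\ref{theoremK}, so one obtains $\|\cdot\|_{L^2(\partial D)}\le C\varepsilon^{N+1}\|\phi\|_{L^2(\partial D)}$; carrying the $\omega^2$ prefactor along and using that every estimate on the kernels of $\mathcal{S}^k_{\partial D,-3}$, $\mathcal{K}^k_{\partial D,-3}$ and on their $\varepsilon$-Taylor coefficients is uniform for $\omega$ in a bounded set, the constant $C$ may be taken independent of $\omega$. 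I expect the one genuinely delicate point to be exactly this $\omega$-uniformity: one must use the \emph{full} small-argument expansion of $H^{(1)}_{\frac{d-2}{2}}$, not merely its leading term, to verify that the $O(\omega^2)$ correction is honestly of order $-3$ with $\omega$-uniform bounds near the diagonal, the $d=2$ logarithmic term requiring a little extra care; but this is already the content of Lemma~\ref{quick1}, so once that is granted the rest is a bookkeeping reprise of Theorem~\ref{theoremK}.
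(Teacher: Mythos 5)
Your proposal is correct and follows essentially the same approach as the paper: the paper's own ``proof'' is the one-line remark preceding the Corollary, namely to run the argument of Theorem~\ref{theoremK} on the Helmholtz layer-potential kernels after peeling off the $\omega^2$-correction via Lemma~\ref{quick1}, which is exactly what you do. You also correctly resolve a notational slip in the statement (the remainder operator for $\mathcal{S}^k$ should carry the subscript $-2$, not $-3$, since the first variation raises the order of the order-$(-3)$ kernel by one), and you are right to flag that the only real care point is the $\omega$-uniformity of the diagonal estimates, including the $d=2$ logarithmic term, which is already dispensed with in Lemma~\ref{quick1}.
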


Moreover similar to the argument in Section \ref{section_D_N}, since $ u_0 $ satisfies $(\Delta + k_0^2) u_0 = 0$, the mapping $u_0 \mapsto \f{\p u_0}{\p \nu} $ can be viewed as the Dirichlet to Neumann map with respect to the operator $\Delta + k_0^2$, which we denote as $\Lambda_{k_0}$. 
{\color{black} We recall the factorization provided in Theorem 3.1 in \cite{uhlmann}: for a general potential $q$, $- \Delta + q$ can be factorized as
\[
- \Delta + q = \left(i \partial_\nu + i E(x) + i B_q(x, i \nabla ) \right) \left(i \partial_\nu - i B_q(x, i  \nabla ) \right),
\]
{\color{black}
where $B_q(x, i  \nabla ) $ is a pseudo differential operator with its symbol recursively defined as in (3.11)-(3.14) in \cite{uhlmann},}
and when $\partial D \in \mathcal{C}^{\infty}$,
\[
\Lambda_{q} = B_q(x, i  \nabla ) |_{\partial D}  \, (  \text{mod} \, \Phi \text{SO}^{-\infty} ),
\]
where it is recalled that $ \text{mod} \, \Phi \text{SO}^{-\infty} $ means $ \text{mod} \, \Phi \text{SO}^{-m} $ for all $m \in \mathbb{N}$. From this, one can show that $\Lambda_{k_0}$ is a pseudo-differential operator of order $1$ with its symbol in the geodesic normal coordinate given by
\beqnx
p_{\Lambda_{k_0}}(x,\xi) 
=
p_{\Lambda_{0}}(x,\xi)  - \frac{1}{2} k_0^2 \, |\xi|^{-1}   +  O\left( |\xi|^{-2}\right) \, .
\eqnx
{\color{black}
where $p_{\Lambda_{0}}(x,\xi) $ is given by \eqref{eq:s1} from the recursive formulas (3.11)-(3.14) in \cite{uhlmann}.
}
{\color{black} If $\partial D \in \mathcal{C}^{k,\alpha}$, we then have $\Lambda_{q} = B(x, i  \nabla ) |_{\partial D}  \, (  \text{mod} \, \mathcal{C}^{k-4,\alpha} \Phi \text{SO}^{-2} )$ and the above equality holds with $O( |\xi|^{-2})$ now representing a symbol of $\mathcal{C}^{k-4,\alpha} S^{-2}$. Comparing this symbol with $p_{\Lambda_{0}}(x,\xi) $ in \eqref{eq:s1}, we can obtain the following lemma.} }
%
%

\begin{Lemma}  \label{quick3}
The following decomposition on $\partial D \in \mathcal{C}^{k,\alpha}$ holds:  
\beqnx
   \Lambda_{k_0}  = \Lambda_{0}  +  \varpi^2  \,  \Lambda_{k_0,-1}  \,,
\eqnx
where $\Lambda_{k_0,-1} $ is uniformly bounded w.r.t. $\varpi$ and is a pseudo-differntial operator of order $-1$ (in a sense of $\mathcal{C}^{k-3,\alpha} S^{-1}$) with its symbol given by
\beqnx
p_{\Lambda_{k_0,-1}}(x,\xi)  = - \frac{1}{2} \epsilon_0 \mu_0 \, |\xi|^{-1}  +  O\left( |\xi|^{-2}\right)  \,.
\eqnx
\end{Lemma}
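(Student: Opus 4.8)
The plan is to read the decomposition off the symbol expansion of $\Lambda_{k_0}$ displayed just above the statement, treating $k_0^2 = \omega^2\varepsilon_0\mu_0$ as the small parameter that perturbs the factorization construction of \cite{uhlmann}; the only genuine work is to justify that expansion carefully and then to package the difference $\Lambda_{k_0}-\Lambda_0$ as a $\omega^2$-small, order $-1$ operator with seminorms uniform in $\omega$.

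First I would return to the factorization argument of \cite{uhlmann}, carried out in the same boundary normal coordinates $(a,s)$ with $\widetilde{\mathbb{X}}(a,s)=\exp_x(a)+s\,\nu(\exp_x(a))$ used in the proof of Lemma~\ref{haha1}. In these coordinates $\Delta+k_0^2$ takes the form $\partial_s^2+(d-1)H(x)\partial_s+\Delta_{\partial D}+k_0^2$ up to terms that vanish on $\partial D$, so compared with the harmonic case ($k_0=0$) the full symbol of the interior operator changes only by the constant $k_0^2$, a term of order $0$. At the level of the principal part this replaces $|\xi|$ by $\sqrt{|\xi|^2-k_0^2}=|\xi|-\tfrac12 k_0^2|\xi|^{-1}+O(|\xi|^{-3})$, which already exhibits the order $-1$ correction with the correct coefficient; running the Riccati-type recursion (3.11)--(3.14) of \cite{uhlmann} in full, one checks that $k_0^2$ affects neither $q_1=|\xi|$ nor the order $0$ part $q_0$ of the symbol (these are fixed by the principal symbol and by the first-order part of the interior operator together with $\partial_s g$, in none of which $k_0^2$ appears), and that it enters the order $-1$ part $q_{-1}$ precisely through a relation of the form $2q_1q_{-1}+(\text{terms free of }k_0^2)=-k_0^2$, giving the contribution $-\tfrac12 k_0^2|\xi|^{-1}$, with all further corrections of order $\le -2$. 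This is exactly $p_{\Lambda_{k_0}}(x,\xi)=p_{\Lambda_0}(x,\xi)-\tfrac12 k_0^2|\xi|^{-1}+O(|\xi|^{-2})$.

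Next I would set $\Lambda_{k_0,-1}:=\omega^{-2}\big(\Lambda_{k_0}-\Lambda_0\big)$. By the previous step its symbol is $\omega^{-2}(p_{\Lambda_{k_0}}-p_{\Lambda_0})=-\tfrac12\varepsilon_0\mu_0|\xi|^{-1}+O(|\xi|^{-2})$ after substituting $k_0^2=\omega^2\varepsilon_0\mu_0$, so $\Lambda_{k_0,-1}$ is a pseudo-differential operator of order $-1$ with the asserted symbol. For the uniform boundedness in $\omega$, I would observe that the recursion of \cite{uhlmann} expresses each $q_j$ as a polynomial in $k_0^2$ (equivalently, the Dirichlet-to-Neumann map of $\Delta+k_0^2$ depends real-analytically on $k_0^2$ over the relevant compact range of $\omega\ll 1$); hence $p_{\Lambda_{k_0}}-p_{\Lambda_0}$ equals $k_0^2$ times a symbol of order $-1$ whose seminorms are bounded uniformly for $\omega$ in a fixed bounded interval, and dividing by $\omega^2=k_0^2/(\varepsilon_0\mu_0)$ leaves $\Lambda_{k_0,-1}$ bounded, say $H^s(\partial D)\to H^{s+1}(\partial D)$, uniformly in $\omega$.

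The main obstacle is the bookkeeping inside the factorization recursion: one must verify cleanly that the added constant $k_0^2$ does not leak into the order $1$ or order $0$ parts of the symbol. This is delicate precisely because a zeroth-order term in the interior operator is a priori of the same order as the subprincipal term $(d-1)H(x)\partial_s$, which does contribute to $q_0$; a careful reading of (3.11)--(3.14) of \cite{uhlmann} shows that $q_0$ depends only on the first-order part of the interior operator and on $\partial_s g$, so $k_0^2$ is pushed all the way down to $q_{-1}$. Making this precise, together with the analytic-in-$k_0^2$ estimate needed for the uniformity statement, is the only nontrivial point of the argument.
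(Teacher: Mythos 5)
Your proposal is correct and follows essentially the same route as the paper: the paper's ``proof'' is the one-line deduction from the symbol expansion $p_{\Lambda_{k_0}}(x,\xi)=p_{\Lambda_0}(x,\xi)-\tfrac12 k_0^2|\xi|^{-1}+O(|\xi|^{-2})$ stated immediately before the lemma, which the paper itself obtains by invoking the factorization method of \cite{uhlmann}; you simply unpack that same deduction, verifying via the Riccati recursion and the half-space model $\sqrt{|\xi|^2-k_0^2}=|\xi|-\tfrac12 k_0^2|\xi|^{-1}+O(|\xi|^{-3})$ that the zeroth-order term $k_0^2$ in the interior operator $\partial_s^2+(d-1)H\partial_s+\Delta_{\partial D}+k_0^2$ first enters at symbol order $-1$ with coefficient $-\tfrac12|\xi|^{-1}$, and then setting $\Lambda_{k_0,-1}:=\omega^{-2}(\Lambda_{k_0}-\Lambda_0)$ with $k_0^2=\omega^2\varepsilon_0\mu_0$.
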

Similarly, when we consider the perturbation of $ \Lambda_{k_0}$ with respect to $h$ along the normal direction of $\partial D$, we may consider the decomposition \eqref{decompsition} and follow Subsection \ref{section_pert_D_N} to obtain the perturbation of $ \Lambda_{k_0}$ in terms of $ (d-1) H(x) h(x) \Lambda_{k_0}$ and $ h(x) \Delta_{\partial D}$, but now with an additional term $h(x) k_0^2$ on the boundary.  Therefore, we readily obtain the following:

\begin{Corollary}
For all ${\tilde{k}} \in\mathbb{R}_+$, $N \in \mathbb{N}$, there exists a constant $C$ depending only on $N$, $||\mathbb{X}||_{\mathcal{C}^2}$ and $||h||_{\mathcal{C}^1}$ such that
the following holds for any $\widetilde{\phi} \in H^{\frac{1}{2}} (\partial D_\varepsilon)$ and $\phi := \tilde{\phi}\circ \Psi_\varepsilon$:
\beqnx
   \bigg|\bigg| \Lambda_{k_0, D_{\varepsilon}}  [ \tilde{\phi} \circ \Psi_\varepsilon ]  - \Lambda_{k_0, D}  [{\phi} ] - \sum_{n=1}^N \varepsilon^n 
\Lambda^{(n)}_{k_0, D,h}  [\phi] \bigg|\bigg|_{H^{\frac{1}{2}}(\partial D)}
   \leq C \varepsilon^{N+1} ||\phi||_{L^2 (\partial D)}\, ,
\eqnx
where
\beqnx
\Lambda^{(1)}_{k_0, D,h}  = \Lambda^{(1)}_{0, D,h}  +  \varpi^2 \Lambda^{(1)}_{k_0, D,h,0} \,.
\eqnx
Here $\Lambda^{(1)}_{k_0, D,h,0} $ is uniformly bounded w.r.t. $\varpi$ and is a pseudo-differential operator of order $0$ in the sense of $\mathcal{C}^{k-2,\alpha} \Phi \text{SO}^{0}$ when $\partial D \in \mathcal{C}^{k,\alpha}, h \in \mathcal{C}^{k-1,\alpha}$ and $k \geq 4$ with
\beqnx
p_{\Lambda^{(1)}_{k_0, D,h,0} }(x,\xi)  = -  \epsilon_0 \mu_0 \, h(x)  +  O\left( |\xi|^{-1}\right)  \,.
\eqnx
\end{Corollary}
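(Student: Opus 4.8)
The plan is to obtain this corollary as a direct transcription of the shape-perturbation analysis already developed for the Neumann--Poincar\'e operator in Theorem~\ref{theoremK} and for the Dirichlet-to-Neumann map in Subsection~\ref{section_pert_D_N}, the only genuinely new feature being the zeroth-order term $k_0^2$ that distinguishes the Helmholtz operator $\Delta+k_0^2$ from the Laplacian. First I would recall that $\Lambda_{k_0}$ is the DtN map for $\Delta+k_0^2$ and, by Lemma~\ref{quick3}, that $\Lambda_{k_0}=\Lambda_0+\omega^2\Lambda_{k_0,-1}$ with $\Lambda_{k_0,-1}$ of order $-1$ and uniformly bounded in $\omega$. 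Using the tubular coordinates $(a,s)$ introduced in Subsection~\ref{section_D_N} together with the decomposition
\[
\Delta+k_0^2=\partial_\nu^2+(d-1)H(x)\,\partial_\nu+\Delta_{\partial D}+k_0^2,
\]
I would repeat the computation leading to \eqref{variationvariation}: perturbing $\partial D$ to $\partial D_\varepsilon$ along $\varepsilon h\nu$ deforms the induced metric, the mean curvature and the surface Laplacian exactly as in \eqref{Taylor1}--\eqref{Taylor3}, so the kernel (equivalently the full symbol, via the recursive factorisation of \cite{uhlmann}) of $\Lambda_{k_0,\partial D_\varepsilon}$ pulled back by $\Psi_\varepsilon$ depends analytically on $\varepsilon$, and the Taylor remainder estimate $C\varepsilon^{N+1}\|\phi\|_{L^2(\partial D)}$ then follows exactly as in the proof of Theorem~\ref{theoremK}, with $C$ governed by $N$, $\|\mathbb{X}\|_{\mathcal{C}^2}$ and $\|h\|_{\mathcal{C}^2}$; the lower-order piece $\omega^2\Lambda_{k_0,-1}$ contributes only harmless corrections, uniform in $\omega$.

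Next I would extract the first-order term. Since $\Delta+k_0^2$ differs from $\Delta$ only by multiplication by the constant $k_0^2$, the first-order shape derivative of $\Lambda_{k_0}$ agrees with that of $\Lambda_0$ up to exactly one extra contribution, namely the term $h(x)k_0^2$ supported on $\partial D$ produced by the $k_0^2$ summand in the decomposition above; the remaining pieces, $(d-1)H(x)h(x)\Lambda_{k_0}$ and $h(x)\Delta_{\partial D}$, carry precisely the structure handled in Subsection~\ref{section_pert_D_N}. That extra contribution is a pseudo-differential operator of order $0$, and rewriting $k_0^2=\omega^2\varepsilon_0\mu_0$ gives $\Lambda^{(1)}_{k_0,D,h}=\Lambda^{(1)}_{0,D,h}+\omega^2\Lambda^{(1)}_{k_0,D,h,0}$ with
\[
p_{\Lambda^{(1)}_{k_0,D,h,0}}(x,\xi)=-\varepsilon_0\mu_0\,h(x)+O(|\xi|^{-1}),
\]
where the $O(|\xi|^{-1})$ absorbs the metric-perturbation corrections arising when the $\varepsilon$-expansion is applied to $\omega^2\Lambda_{k_0,-1}$. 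Since neither the leading symbol nor these corrections carry any power of $\omega$, the operator $\Lambda^{(1)}_{k_0,D,h,0}$ is uniformly bounded with respect to $\omega$.

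The main obstacle I anticipate is the bookkeeping that cleanly separates the $\omega$-independent operator $\Lambda^{(1)}_{0,D,h}$ from the new $\omega^2$-piece: one has to check that every term generated by inserting $k_0^2$ into the perturbation formula either reproduces part of $\Lambda^{(1)}_{0,D,h}$ or carries a factor $k_0^2$, and that the resulting operator $\Lambda^{(1)}_{k_0,D,h,0}$ is genuinely of order $0$ with the stated principal symbol, so that no spurious growth in $|\xi|$ survives. This accounting is essentially routine once the decomposition \eqref{decompsition} is in hand; apart from it, the proof is a verbatim adaptation of Theorem~\ref{theoremK} and Subsection~\ref{section_pert_D_N}.
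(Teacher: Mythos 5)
Your proposal follows essentially the same route as the paper's own justification, which is the one-paragraph sketch immediately preceding the corollary: invoke the tubular-neighbourhood factorisation of $\Delta + k_0^2$ from \eqref{decompsition} with the extra summand $k_0^2$, repeat the perturbative analysis of Subsection~\ref{section_pert_D_N}, and isolate the single new contribution coming from $k_0^2 = \omega^2\varepsilon_0\mu_0$.

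One caveat worth stressing, since you yourself flag it as the main obstacle: verifying that $\Lambda^{(1)}_{k_0,D,h,0}$ really has order $0$ with principal symbol $-\varepsilon_0\mu_0\,h(x)$ is the one step where neither you nor the paper carries out the accounting. The extra $h(x)k_0^2$ summand does produce an order-$0$ multiplication operator, but the Laplace variation formula for the DtN map already contains the near-cancelling cluster $-h\Lambda\!\cdot\!\Lambda + h\Delta_{\partial D} - (d-1)hH\Lambda$, and inserting $\Lambda_{k_0} = \Lambda_{0} + \omega^2\Lambda_{k_0,-1}$ into that cluster generates further $\omega^2$-corrections of order $0$ that can interfere with the $h(x)k_0^2$ term. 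A concrete check (for instance $d=2$, $\partial D = R\,\mathbb{S}^1$, $h\equiv 1$, using $\Lambda_{k_0}(n) = k_0 J_n'(k_0R)/J_n(k_0R)$) is the quickest way to confirm that the claimed leading constant survives these cancellations; your current write-up, like the paper's sketch, asserts the final symbol without performing this verification.
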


We would like to remark that {\color{black} the shape derivative of $\Lambda_{k_0, D} $ with respect to the variation direction $h \in \mathcal{C}^1 $, i.e. $\Lambda^{(1)}_{k_0, D,h} $,} contains the term $\Lambda^{(1)}_{k_0, D,h,0}$ of order $0$ instead of $-1$, in contrast to what one may have expected.

\subsection{Scattering coefficients in arbitrary dimensions}

With the above preparations, we have all the tools to analyse the scattered field in \eqref{transmissionk} in terms of the scattering coefficients as defined in \cite{homoscattering, heteroscattering}, which we shall first extend to arbitrary dimensions in what follows. 

{\color{black}
Similar to the derivation in Subsection \ref{subsection:harmonic}, from an analogous proof of the Graf's addition formula \cite{watson}, as in \cite{handbook,stein}, one can utilize the generating function and the reproducing kernel property of the Gegenbauer's polynomial to obtain the following Gegenbauer's addition formula (c.f. Equation (4) in \cite{watson}, P. 365 ) :
\beqnx
& &( k_0 |x-y| )^{- \frac{d-2}{2}} H^{(1)}_{\frac{d-2}{2}}(k_0 |x-y|)   \\
&=&   \sum_{k=0}^{\infty}  ( k_0|x|)^{- \frac{d-2}{2} +k } H^{(1)}_{\frac{d-2}{2} + k }(k_0 |x|)  ( k_0 |y| )^{- \frac{d-2}{2} +k} J_{\frac{d-2}{2} + k }(k_0 |y|) \sum_{ L \in I_k} Y_{L} (\omega_x) \, \overline{ Y_{L} (\omega_y) }
\eqnx
for $|x| > |y|$. }
From that, the scattering coefficients of a $\partial D \in \mathcal{C}^{2,\alpha}$ domain can then be defined by putting $ u_0 = ( k r )^{- \frac{d-2}{2} + k} J_{\frac{d-2}{2} + k }(k r)  Y_{L} (\omega) $ which satisfies $(\Delta + k_0^2) u_0 = 0$, and then taking the coefficient with respect to the function $Y_{L} (\omega_x)$, i.e. as follows:
\begin{Definition}
The scattering coefficients associated with the scattered field $u-u_0$ given by \eqref{transmissionk} with given $\mu_0,\mu_1,\epsilon_0,\epsilon_1, \varpi$ and a domain $D \subset \mathbb{R^d}$ with a $\mathcal{C}^{2,\alpha}$ boundary are defined by
\beqnx
& &
\mathcal{W}_{L, M} ( \mu_0,\mu_1,\epsilon_0,\epsilon_1 , D) \\
&:=&  \int_{\partial D}  \, 
 ( k_0 |y| )^{- \frac{d-2}{2} + k } J_{\frac{d-2}{2} + k }(k_0 |y|) \, \overline{ Y_{L} (\omega_y) } \bigg\{
 \left( \frac{1}{\mu_1} -  \frac{1}{\mu_0}  \right) \bigg\{   \f{1}{2}  \left( \frac{1}{\mu_0} I + \frac{1}{\mu_1} \, \left( \mathcal{S}^{k_1}_{\partial D} \right)^{-1}  \mathcal{S}^{k_0}_{\partial D} \right) \\
 && +
\frac{1}{\mu_0} {\mathcal{K}^{k0}_{\partial D}}^* 
- \frac{1}{\mu_1} {\mathcal{K}^{k_1}_{\partial D}}^*   \left( \mathcal{S}^{k_1}_{\partial D} \right)^{-1}  \mathcal{S}^{k_0}_{\partial D}
 \bigg\}^{-1}  \circ \Lambda_{k_0} \bigg\}\\
& & \quad \quad \quad \quad \times \left[ ( k_0 r )^{- \frac{d-2}{2} + n} J_{\frac{d-2}{2} + n }(k_0 r)  Y_{M} (\omega)  \right] (y) \, d \sigma(y)  \, ,
\eqnx
where $k_0 = \varpi \sqrt{ \mu_0 \epsilon_0 }$ and $k_1 = \varpi \sqrt{ \mu_1 \epsilon_1 }$.
\end{Definition}
By direct calculations, one can establish the following lemma. 
\begin{Lemma}
Let $D \subset \mathbb{R}^d$ be a bounded domain with a $\mathcal{C}^{2,\alpha}$ boundary. Consider the solution to \eqref{transmissionk} with 
$$
u_0(x) = \sum_{k=0}^{\infty}  \sum_{ L \in I_k }   a_{L} \,   ( k_0 r_x )^{- \frac{d-2}{2} + k} J_{\frac{d-2}{2} + k }(k_0 r_x)  Y_{L} (\omega_x)  \,. 
$$ 
Then the scattered field for $|x| > \sup\{ |x| : x \in D \} $ is given by
\begin{equation}\label{eq:sf1}
\begin{split}
 &(u - u_0) (x)\\
 = \, & C_{k_0,d} \sum_{k=0}^{\infty}  \sum_{L \in I_k} 
\sum_{n=0}^{\infty}  \sum_{ M \in I_n}  a_{M} 
 \, (  k_0 |x|)^{- \frac{d-2}{2} + k } H^{(1)}_{\frac{d-2}{2} + k }(k_0|x|)  Y_{L} (\omega_x) \, \mathcal{W}_{L, M} ( \mu_0,\mu_1,\epsilon_0,\epsilon_1 , D) \,.
 \end{split}
\end{equation}
Hence, for $x \in R \, \mathbb{S}^{d-1}$ where $R > \sup\{ |x| : x \in D  \} $, we have
\begin{equation}\label{eq:sf2}
\begin{split}
  & \mathcal{W}_{L, M} ( \mu_0,\mu_1,\epsilon_0,\epsilon_1 , D) \\
  = \, &   \frac{1}{ C_{k_0,d}  }    (  k_0 |x|)^{ \frac{d-2}{2} - k } \frac{1}{H^{(1)}_{\frac{d-2}{2} + k }(k_0 R) }  \int_{\mathbb{S}^{d-1}}  \overline{ Y_L(\omega_x) }  \bigg(u - ( k_0 r )^{- \frac{d-2}{2} + n} J_{\frac{d-2}{2} + n }(k_0 r)  Y_{M} (\omega)    \bigg) (R \, \omega_x )  d \omega_{x} \,.
\end{split}
\end{equation}
\end{Lemma}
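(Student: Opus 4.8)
The plan is to mirror the derivation of the GPT expansion in Section~\ref{sec2}: combine the layer-potential representation \eqref{eq:rep1} of the scattered field of \eqref{transmissionk} with the Graf-type addition formula for the Hankel kernel recorded just above the definition of the scattering coefficients, and then extract the coefficients using the orthonormality of the spherical harmonics $Y_L$ on $\mathbb{S}^{d-1}$.

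First I would fix the incident field $u_0$ as in the statement; since $(\Delta+k_0^2)u_0=0$ by construction, Lemma~\ref{quick3} identifies $\partial_\nu u_0=\Lambda_{k_0}[u_0|_{\partial D}]$, and \eqref{eq:rep1} gives $(u-u_0)(x)=\mathcal{S}^{k_0}_{\partial D}[\psi](x)$ with $\psi=(\tfrac1{\mu_1}-\tfrac1{\mu_0})\{\cdots\}^{-1}\Lambda_{k_0}[u_0|_{\partial D}]$, the operator in the braces $\{\cdots\}$ being invertible by the Fredholm alternative as already noted. By linearity of $\psi$ in $u_0$, it suffices to treat a single mode $u_0=(k_0r)^{-(d-2)/2+n}J_{(d-2)/2+n}(k_0r)Y_M(\omega)$, the general case following by superposition once convergence is in hand. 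For $x$ with $|x|>\sup_{y\in D}|y|\ge|y|$ for every $y\in\partial D$, I would insert the Graf addition formula into the kernel $C_{k_0,d}(k_0|x-y|)^{-(d-2)/2}H^{(1)}_{(d-2)/2}(k_0|x-y|)$ of $\mathcal{S}^{k_0}_{\partial D}$, interchange the $y$-integration with the $k$-summation, and recognize the resulting inner integral $\int_{\partial D}(k_0|y|)^{-(d-2)/2+k}J_{(d-2)/2+k}(k_0|y|)\overline{Y_L(\omega_y)}\psi(y)\,d\sigma(y)$ as exactly $\mathcal{W}_{L,M}(\mu_0,\mu_1,\eps_0,\eps_1,D)$ by the Definition. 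Summing the result over $M\in I_n$ and $n$ against $a_M$ yields \eqref{eq:sf1}.

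For the inversion formula \eqref{eq:sf2}, I would restrict \eqref{eq:sf1} (with the single incident mode $u_0=(k_0r)^{-(d-2)/2+n}J_{(d-2)/2+n}(k_0r)Y_M(\omega)$) to $x=R\omega_x$ with $R>\sup_{x\in D}|x|$, subtract the incident mode, multiply by $\overline{Y_L(\omega_x)}$ and integrate over $\mathbb{S}^{d-1}$; orthonormality of the $Y_L$ annihilates every term except the one with index $L$ (hence $k=l_{d-1}$), leaving $C_{k_0,d}(k_0R)^{-(d-2)/2+k}H^{(1)}_{(d-2)/2+k}(k_0R)\,\mathcal{W}_{L,M}$ on the right, which one solves for after dividing by the nonvanishing prefactor (the Hankel function $H^{(1)}_\nu$ has no real zeros).

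The main obstacle is the rigorous justification of the term-by-term integration and of the superposition over $M$: one must bound the tail of the Graf series uniformly for $y\in\partial D$ and $x\in R\mathbb{S}^{d-1}$. This follows from the large-order asymptotics $J_\nu(z)\sim\frac1{\Gamma(\nu+1)}(z/2)^\nu$ and $H^{(1)}_\nu(z)\sim-\tfrac{i\Gamma(\nu)}{\pi}(z/2)^{-\nu}$ together with $\max_{y\in\partial D}|y|<R$, so that the ratio of consecutive terms is dominated by a fixed $(\max_{y\in\partial D}|y|/R)^2<1$; the same estimate simultaneously secures the convergence of the defining series for $u-u_0$. Everything else is routine bookkeeping with spherical harmonics and \eqref{eq:rep1}.
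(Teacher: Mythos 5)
Your proposal is correct and is essentially the ``direct calculation'' the paper has in mind: combine the layer-potential representation \eqref{eq:rep1} with the Graf-type addition formula for the Helmholtz kernel, interchange the $y$--integration with the multipole sum to read off $\mathcal{W}_{L,M}$ from the definition, and then invert by testing \eqref{eq:sf1} against $\overline{Y_L(\omega_x)}$ on $R\,\mathbb{S}^{d-1}$ using orthonormality and the nonvanishing of $H^{(1)}_{\nu}$ on the positive real axis. Your convergence check via the large-order Bessel/Hankel asymptotics, which makes the ratio test parameter $(\max_{y\in\partial D}|y|/R)^2<1$, is the correct justification for interchanging sum and integral and for the superposition over $M$, and is the same ingredient one would supply to fill in the paper's ``by direct calculations.''
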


We note one important property in force when $\partial D \in \mathcal{C}^{k,\alpha}$ for $k \geq 4$:
\[
\begin{split}
& \left( \frac{1}{\mu_1} -  \frac{1}{\mu_0}  \right)  \left\{   \f{1}{2}  \left( \frac{1}{\mu_0} I + \frac{1}{\mu_1} \, \left( \mathcal{S}^{k_1}_{\partial D} \right)^{-1}  \mathcal{S}^{k_0}_{\partial D} \right)
+
\frac{1}{\mu_0} {\mathcal{K}^{k0}_{\partial D}}^* 
- \frac{1}{\mu_1} {\mathcal{K}^{k_1}_{\partial D}}^*   \left( \mathcal{S}^{k_1}_{\partial D} \right)^{-1}  \mathcal{S}^{k_0}_{\partial D}
 \right\}^{-1}  \circ \Lambda_{k_0} ( u_0 ) \\
&= 
  \left\{  \lambda  I
- {\mathcal{K}_{\partial D}}^*   
 \right\}^{-1}   \circ \Lambda_{k_0} (u_0) +  \varpi^2 \, R_{\mu_0,\mu_1,\epsilon_0,\epsilon_1,\varpi,\partial D, -1}  ( u_0 ) ,
\end{split}
\]
where $\lambda = \frac{\mu_0 + \mu_1}{2( \mu_0 - \mu_1 ) }  $ which is a similar equation as that in Section 2, modulus $ \varpi^2 \, R_{\partial D, -1}  ( u_0 )$ for a certain operator $R_{\partial D, -1} $ uniformly bounded with respect to $\varpi$ and of order $-1$, in the sense of $ \mathcal{C}^{k-3,\alpha} \Phi \text{PO}^{-1}$ tracing back the regularity of coefficients. 
Therefore we obtain:
\begin{Theorem} \label{approx_int}
For $\partial D \in \mathcal{C}^{k,\alpha}, k \geq 4$, we have
\beqnx
u  - u_0 =  \mathcal{S}^{k_0}_{\partial D} \circ \left(    \left\{  \lambda  I
- {\mathcal{K}_{\partial D}}^*   
 \right\}^{-1}   \circ \Lambda_{k_0} (u_0) +  \varpi^2 \, R_{\mu_0,\mu_1,\epsilon_0,\epsilon_1,\varpi,\partial D, -1}   ( u_0 )  \right),
\eqnx
where $R_{\partial D, -1} $ is uniformly bounded with respect to $\varpi$ and is of order $-1$ in the sense of $ \mathcal{C}^{k-3,\alpha} \Phi \text{PO}^{-1}$. 
\end{Theorem}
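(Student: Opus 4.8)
The plan is to read the claim directly off the integral representation \eqref{eq:rep1}, using the low-frequency decompositions in Lemmas~\ref{quick1} and \ref{quick3}; the one substantive step is to justify the displayed identity stated immediately above the theorem, after which the theorem follows by a single substitution. Since $u_0$ solves $(\Delta+k_0^2)u_0=0$, one has $\partial_\nu u_0=\Lambda_{k_0}(u_0)$, so \eqref{eq:rep1} reads
\[
u-u_0=\Bigl(\tfrac{1}{\mu_1}-\tfrac{1}{\mu_0}\Bigr)\,\mathcal{S}^{k_0}_{\partial D}\circ\mathcal{B}^{-1}\circ\Lambda_{k_0}(u_0),
\]
where $\mathcal{B}$ denotes the operator inside the curly braces in \eqref{eq:rep1}. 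It therefore suffices to expand $\bigl(\tfrac{1}{\mu_1}-\tfrac{1}{\mu_0}\bigr)\mathcal{B}^{-1}$ in powers of $\omega^2$.

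First I would substitute $\mathcal{S}^{k_j}_{\partial D}=\mathcal{S}_{\partial D}+\omega^2\mathcal{S}^{k_j}_{\partial D,-3}$ and $(\mathcal{K}^{k_j}_{\partial D})^*=\mathcal{K}^*_{\partial D}+\omega^2\mathcal{K}^{k_j}_{\partial D,-3}$ from Lemma~\ref{quick1} into $\mathcal{B}$. The key elementary point is that
\[
(\mathcal{S}^{k_1}_{\partial D})^{-1}\mathcal{S}^{k_0}_{\partial D}=I+(\mathcal{S}^{k_1}_{\partial D})^{-1}\bigl(\mathcal{S}^{k_0}_{\partial D}-\mathcal{S}^{k_1}_{\partial D}\bigr)=I+\omega^2 T,
\]
with $T:=(\mathcal{S}^{k_1}_{\partial D})^{-1}\bigl(\mathcal{S}^{k_0}_{\partial D,-3}-\mathcal{S}^{k_1}_{\partial D,-3}\bigr)$ of order $-2$ (inverse of an order $-1$ operator composed with order $-3$ operators) and uniformly bounded in $\omega$. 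Plugging this in and collecting the $\omega^0$ part, one gets $\mathcal{B}=\mathcal{B}_0+\omega^2\mathcal{B}_{-2}$, where
\[
\mathcal{B}_0=\tfrac12\Bigl(\tfrac{1}{\mu_0}+\tfrac{1}{\mu_1}\Bigr)I+\Bigl(\tfrac{1}{\mu_0}-\tfrac{1}{\mu_1}\Bigr)\mathcal{K}^*_{\partial D},
\]
$\mathcal{B}_{-2}$ is of order $-2$ and uniformly bounded, and the algebraic identity $\mathcal{B}_0=\bigl(\tfrac{1}{\mu_1}-\tfrac{1}{\mu_0}\bigr)(\lambda I-\mathcal{K}^*_{\partial D})$ holds with $\lambda=\tfrac{\mu_0+\mu_1}{2(\mu_0-\mu_1)}$; since $\mu_0,\mu_1>0$ we have $|\lambda|>1/2$, so $\mathcal{B}_0$ is invertible and $\bigl(\tfrac{1}{\mu_1}-\tfrac{1}{\mu_0}\bigr)\mathcal{B}_0^{-1}=\{\lambda I-\mathcal{K}^*_{\partial D}\}^{-1}$.

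Next I would run a Neumann series: for $\omega$ small, $\mathcal{B}^{-1}=\mathcal{B}_0^{-1}-\omega^2\mathcal{B}_0^{-1}\mathcal{B}_{-2}\mathcal{B}_0^{-1}\bigl(I+O(\omega^2)\bigr)=\mathcal{B}_0^{-1}+\omega^2\mathcal{C}_{-2}$, with $\mathcal{C}_{-2}$ of order $-2$ and uniformly bounded, hence $\bigl(\tfrac{1}{\mu_1}-\tfrac{1}{\mu_0}\bigr)\mathcal{B}^{-1}=\{\lambda I-\mathcal{K}^*_{\partial D}\}^{-1}+\omega^2\widetilde{\mathcal{C}}_{-2}$. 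Composing with $\Lambda_{k_0}$, which by Lemma~\ref{quick3} is a pseudodifferential operator of order $1$, raises the order of the remainder to $-1$; setting $R_{\mu_0,\mu_1,\eps_0,\eps_1,\omega,\partial D,-1}:=\widetilde{\mathcal{C}}_{-2}\circ\Lambda_{k_0}$ and substituting back into the reformulated \eqref{eq:rep1} gives exactly the asserted identity.

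The main obstacle is the uniform-in-$\omega$ bookkeeping rather than any single hard estimate: one must check that $(\mathcal{S}^{k_1}_{\partial D})^{-1}$ is well defined and uniformly bounded as $\omega\to0$ (granted by the non-resonance hypothesis on $k_1^2$ together with Lemma~\ref{quick1}, with the two-dimensional case requiring extra care because of the logarithmic behaviour of $\Gamma_k$), that the Neumann-series remainders are uniform in $\omega$, and — crucially for the conclusion — that each ``lower-order'' correction genuinely drops an order in the pseudodifferential calculus rather than merely remaining bounded, so that composition with the order-$1$ operator $\Lambda_{k_0}$ lands precisely at order $-1$. Everything else reduces to the elementary algebra identifying $\mathcal{B}_0^{-1}$ with $\{\lambda I-\mathcal{K}^*_{\partial D}\}^{-1}$.
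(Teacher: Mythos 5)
Your proposal is correct and follows the same route as the paper: substitute into \eqref{eq:rep1} the expansion $\bigl(\tfrac{1}{\mu_1}-\tfrac{1}{\mu_0}\bigr)\mathcal{B}^{-1}\circ\Lambda_{k_0} = \{\lambda I-\mathcal{K}^*_{\partial D}\}^{-1}\circ\Lambda_{k_0}+\omega^2 R_{-1}$, which the paper merely asserts as an ``important property'' without proof and which you correctly reconstruct via Lemma~\ref{quick1}, the algebraic identity $\mathcal{B}_0=\bigl(\tfrac{1}{\mu_1}-\tfrac{1}{\mu_0}\bigr)(\lambda I-\mathcal{K}^*_{\partial D})$, and a Neumann series that drops the correction to order $-2$ before composing with the order-$1$ operator $\Lambda_{k_0}$. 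The two-dimensional invertibility subtlety you flag is real, but the paper does not address it either, so it is not a gap relative to the paper's own argument.
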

Using the asymptotic properties of $J_{\alpha}$ and $Y_{\alpha}$ in \eqref{eq:as1}, we can further obtain the following corollary by straightforward calculations.
\begin{Lemma}
We have
\beqnx
\mathcal{W}_{L, M} ( \mu_0,\mu_1,\epsilon_0,\epsilon_1 , D) = \mathcal{M}_{L, M} ( \lambda , D) + O(\varpi^2)\, ,
\eqnx
where $\lambda = \frac{\mu_0 + \mu_1}{2( \mu_0 - \mu_1 ) }  $.
\end{Lemma}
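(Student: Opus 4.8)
The plan is to compare the two quantities directly at the level of their defining bilinear forms. By \eqref{eq:gpt1}, $\mathcal{M}_{L,M}(\lambda,D)$ is the $L^2(\partial D)$-pairing of the conjugate harmonic polynomial $|y|^{l_{d-1}}\overline{Y_L(\omega_y)}$ against $(\lambda I-\mathcal{K}_{\partial D}^*)^{-1}\circ\partial_\nu$ applied to $r^{m_{d-1}}Y_M(\omega)$, whereas $\mathcal{W}_{L,M}$ is the analogous pairing in which the harmonic polynomials are replaced by the regular spherical waves (products of a Bessel function of argument $k_0r$ and a spherical harmonic) appearing in the definition, and the operator $(\lambda I-\mathcal{K}_{\partial D}^*)^{-1}\circ\partial_\nu$ is replaced by $\bigl(\tfrac1{\mu_1}-\tfrac1{\mu_0}\bigr)\{\cdots\}^{-1}\circ\Lambda_{k_0}$. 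I would show that each of these two replacements costs only $O(\omega^2)$, and then collect the errors.

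For the operator, the reduction is essentially the content of the material preceding Theorem~\ref{approx_int}: by Lemma~\ref{quick1}, $\mathcal{S}^{k_j}_{\partial D}$ and ${\mathcal{K}^{k_j}_{\partial D}}^*$ differ from their static counterparts by $\omega^2$ times operators of order $-3$ that are uniformly bounded in $\omega$, and feeding this through a Neumann series inside the Fredholm-invertible block yields
\[
\Bigl(\tfrac1{\mu_1}-\tfrac1{\mu_0}\Bigr)\{\cdots\}^{-1}\circ\Lambda_{k_0}=\{\lambda I-\mathcal{K}_{\partial D}^*\}^{-1}\circ\Lambda_{k_0}+\omega^2\,R_{-1},\qquad \lambda=\tfrac{\mu_0+\mu_1}{2(\mu_0-\mu_1)},
\]
with $R_{-1}$ uniformly bounded in $\omega$. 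Note that $\lambda$ is precisely the electrostatic contrast parameter with $\varepsilon_c,\varepsilon_m$ replaced by $1/\mu_1,1/\mu_0$, as it must be, since the formal $\omega\to0$ limit of \eqref{transmissionk} is $\nabla\cdot(\mu_D^{-1}\nabla u)=0$. Then Lemma~\ref{quick3} lets me replace $\Lambda_{k_0}$ by $\Lambda_0+\omega^2\Lambda_{k_0,-1}$ with $\Lambda_{k_0,-1}$ uniformly bounded; since $\partial_\nu$ acting on a harmonic polynomial is exactly the Dirichlet-to-Neumann map $\Lambda_0$, the operator in $\mathcal{W}_{L,M}$ thus coincides with the one in $\mathcal{M}_{L,M}$ modulo $\omega^2$ times operators uniformly bounded on $L^2(\partial D)$ (indeed smoothing).

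For the incident field and the weight I would use the small-argument asymptotics \eqref{eq:as1}: the regular spherical wave reduces, after the scalar normalization built into the definition of the scattering coefficients (the same normalization that appears in the inversion formula \eqref{eq:sf2} and that cancels against its electrostatic analogue \eqref{eq:gpt2} as $\omega\to0$), to the harmonic polynomial $r^{m_{d-1}}Y_M(\omega)$ plus $\omega^2$ times a remainder which solves a Helmholtz equation and is therefore bounded in $\mathcal{C}^2(\overline{D})$ uniformly in $\omega$; likewise for the conjugate weight. Substituting these expansions together with the operator reduction into the pairing defining $\mathcal{W}_{L,M}$ and expanding, the leading term is exactly $\mathcal{M}_{L,M}(\lambda,D)$, while every remaining contribution is a product of a factor $\omega^2$, an operator bounded uniformly in $\omega$ on $L^2(\partial D)$ or $H^{\pm1/2}(\partial D)$ (Lemmas~\ref{quick1},~\ref{quick3} and Theorem~\ref{approx_int}), and $\mathcal{C}^2(\overline{D})$-bounded functions; since $\partial D$ is compact, each such contribution is $O(\omega^2)$, which is the assertion.

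The main obstacle is not the expansions themselves but the uniform-in-$\omega$ estimates on the inverse operators $(\mathcal{S}^{k_1}_{\partial D})^{-1}$ and $\{\cdots\}^{-1}$ that these expansions rest upon. Their uniform invertibility needs $k_1^2=\omega^2\mu_1\varepsilon_1$ to stay away from the Dirichlet eigenvalues of $-\Delta$ in $D$, which is automatic for $\omega\ll1$; but in dimension $d=2$ one must also cope with the logarithmic singularity of the fundamental solution, where $\mathcal{S}_{\partial D}$ can fail to be invertible on all of $L^2(\partial D)$ — one then works on $L^2_0(\partial D)$, or rescales $D$ away from logarithmic capacity $1$ — and where the correction to $\mathcal{S}^k_{\partial D}$ formally carries a harmless factor $\log\omega$ (still $o(\omega)$). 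This is precisely the analytic setting of \cite{new_exp} underlying Lemmas~\ref{quick1}--\ref{quick3}, so once those are in force the only remaining work is the bookkeeping of the Bessel normalization constants, which is routine given \eqref{eq:as1}.
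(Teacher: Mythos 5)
Your proposal follows the same route the paper intends: invoke Lemma~\ref{quick1}, the expansion established just before Theorem~\ref{approx_int} (which identifies $\lambda=\frac{\mu_0+\mu_1}{2(\mu_0-\mu_1)}$ as the $\omega\to0$ limit of the $\{\cdots\}^{-1}$ block), Lemma~\ref{quick3} for $\Lambda_{k_0}=\Lambda_0+\omega^2\Lambda_{k_0,-1}$, and the Bessel asymptotics \eqref{eq:as1} to reduce the spherical waves to harmonic polynomials, exactly as the paper's ``by straightforward calculations'' indicates. You are in fact somewhat more careful than the paper in two respects: you note explicitly that the small-argument Bessel expansion produces normalization factors $\propto k_0^{k}$ and $\propto k_0^{n}$ that must be absorbed before the statement can literally read ``$\mathcal{W}_{L,M}=\mathcal{M}_{L,M}+O(\omega^2)$'', and you flag the $d=2$ logarithmic degeneracy of $\mathcal{S}_{\partial D}$; both are genuine loose ends in the paper's statement that your proposal handles at least as well as the original text does.
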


\subsection{Sensitivity analysis of the scattering coefficients in the low frequency regime}

Combining Lemmas \ref{approx_int} and \ref{haha1}, and tracing back regularity \eqref{regularity}, we readily obtain:

\begin{Lemma} \label{haha11}
{\color{black} For $d >2 $,} and $\partial D \in \mathcal{C}^{k,\alpha}, k \geq 4$, the scattering coefficient $\mathcal{W}_{L, M} ( \mu_0,\mu_1,\epsilon_0,\epsilon_1 , D)  $ has the following representation
\begin{equation}\label{eq:re1}
\begin{split}
 &\mathcal{W}_{L, M} ( \mu_0,\mu_1,\epsilon_0,\epsilon_1 , D)  \\
=&
\bigg \langle    ( k_0 r )^{- \frac{d-2}{2} + k } J_{\frac{d-2}{2} + k }(k_0 r)  Y_{L} (\omega) \,,  \Big( P_{D,1} + P_{D,0} + P_{D,-1} \\
&
\qquad\qquad+ \varpi^2 \, R_{\mu_0,\mu_1,\epsilon_0,\epsilon_1,\varpi,\partial D, -1} \Big)\Big(  ( k_0 r )^{- \frac{d-2}{2} + n } J_{\frac{d-2}{2} + n }(k_0 r)  Y_{M} (\omega)  \Big)  \bigg\rangle_{L^2 (\partial D, d \sigma )},
\end{split}
\end{equation}
where $P_{D,m}$ are pseudo-differential operators of order $m$ for $m = 1,0,-1$ as given in Lemma \ref{haha1}, and $R_{\mu_0,\mu_1,\epsilon_0,\epsilon_1,\varpi,\partial D, -1} $ is of order $-1$ in the sense of $ \mathcal{C}^{k-3,\alpha} \Phi \text{PO}^{-1}$ and is uniformly bounded with respect to $\varpi$.
\end{Lemma}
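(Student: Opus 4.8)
The plan is to reduce the representation \eqref{eq:re1} to the electrostatic one of Lemma~\ref{haha1} by stripping off, one at a time, the $\omega$-dependent corrections that separate the low-frequency Helmholtz integral operator from its electrostatic counterpart, while checking that each such correction is a pseudo-differential operator of order $-1$ that is bounded uniformly in $\omega$ as $\omega\to0$.

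I would begin from the Definition of $\mathcal{W}_{L,M}$ and invoke the identity recorded just before Theorem~\ref{approx_int}, namely
\[
\begin{aligned}
&\left( \frac{1}{\mu_1} -  \frac{1}{\mu_0}  \right)  \left\{   \f{1}{2}  \left( \frac{1}{\mu_0} I + \frac{1}{\mu_1} \left( \mathcal{S}^{k_1}_{\partial D} \right)^{-1}  \mathcal{S}^{k_0}_{\partial D} \right) + \frac{1}{\mu_0} {\mathcal{K}^{k0}_{\partial D}}^* \right. \\
&\qquad\left. - \frac{1}{\mu_1} {\mathcal{K}^{k_1}_{\partial D}}^*   \left( \mathcal{S}^{k_1}_{\partial D} \right)^{-1}  \mathcal{S}^{k_0}_{\partial D} \right\}^{-1}  \circ \Lambda_{k_0}
=  \left\{  \lambda  I - {\mathcal{K}_{\partial D}}^*  \right\}^{-1}   \circ \Lambda_{k_0} +  \omega^2 \, R_{\mu_0,\mu_1,\eps_0,\eps_1,\omega,\partial D,-1},
\end{aligned}
\]
with $\lambda = \f{\mu_0+\mu_1}{2(\mu_0-\mu_1)}$ and $R_{\cdots,-1}$ of order $-1$ and uniformly bounded in $\omega$. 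This identity is itself obtained by substituting the decompositions $\mathcal{S}^{k_j}_{\partial D} = \mathcal{S}_{\partial D} + \omega^2 \mathcal{S}^{k_j}_{\partial D,-3}$ and ${\mathcal{K}^{k_j}_{\partial D}}^* = {\mathcal{K}_{\partial D}}^* + \omega^2 \mathcal{K}^{k_j}_{\partial D,-3}$ from Lemma~\ref{quick1} into the curly bracket, using that $( \mathcal{S}^{k_1}_{\partial D})^{-1}  \mathcal{S}^{k_0}_{\partial D}$ differs from $I$ by an $\omega^2$-small term of order $-2$, expanding the perturbed inverse in a Neumann series, and noting that every $\omega^2$-correction carries an extra $\mathcal{S}_{\partial D,-3}$ or $\mathcal{K}_{\partial D,-3}$ factor which, balanced against the order $+1$ operator $\Lambda_{k_0}$, leaves an operator of order $-1$.

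Next I would insert the decomposition $\Lambda_{k_0} = \Lambda_0 + \omega^2 \Lambda_{k_0,-1}$ from Lemma~\ref{quick3}, with $\Lambda_{k_0,-1}$ of order $-1$ and uniformly bounded in $\omega$. Since $|\lambda|>1/2$, the resolvent $(\lambda I - \mathcal{K}^*_{\partial D})^{-1}$ equals $\lambda^{-1}I$ plus a compact operator and is bounded on the relevant Sobolev spaces on $\partial D$, so $(\lambda I - \mathcal{K}^*_{\partial D})^{-1}\circ \Lambda_{k_0,-1}$ is again of order $-1$ and uniformly bounded in $\omega$. Finally, Lemma~\ref{haha1} identifies the remaining piece $(\lambda I - \mathcal{K}^*_{\partial D})^{-1}\circ \Lambda_0$ with $P_{D,1}+P_{D,0}+P_{D,-1}$, carrying exactly the symbols listed there. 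Collecting all the $\omega^2$-terms into one operator, which I would again denote $R_{\mu_0,\mu_1,\eps_0,\eps_1,\omega,\partial D,-1}$, of order $-1$ and uniformly bounded in $\omega$, and recalling that $( k_0 r )^{- \frac{d-2}{2} + n } J_{\frac{d-2}{2} + n }(k_0 r)  Y_{M} (\omega)$ solves $(\Delta + k_0^2)u_0 = 0$ and hence plays for $\Lambda_{k_0}$ the role that $r^n Y_M(\omega)$ plays for $\Lambda_0 = \partial_\nu$, I would substitute the resulting identity into the Definition of $\mathcal{W}_{L,M}$ to arrive at \eqref{eq:re1}; the pairing there is read, as elsewhere in the paper, as the $L^2$-pivoted duality, which is legitimate because $( k_0 r )^{- \frac{d-2}{2} + k } J_{\frac{d-2}{2} + k }(k_0 r)  Y_{L} (\omega)$ and $( k_0 r )^{- \frac{d-2}{2} + n } J_{\frac{d-2}{2} + n }(k_0 r)  Y_{M} (\omega)$ are smooth on $\partial D$.

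The point deserving genuine care, rather than routine calculation, is the bookkeeping of mapping properties under composition: one must verify that each $\omega^2$-correction produced along the way is truly of order $-1$ (and not merely order $0$), i.e. that the regularity gained from the $\mathcal{S}_{\partial D,-3}$ and $\Lambda_{k_0,-1}$ factors is not consumed by the order $+1$ operators $\Lambda_{k_0}$ and $\Lambda_0$ with which they are composed, and that all resulting operator-norm bounds are uniform in $\omega$ down to $\omega = 0$. This follows from the pseudo-differential calculus on $\partial D$ together with Lemmas~\ref{quick1} and \ref{quick3}, but it is where the content of the statement actually sits; everything else is assembling results already in hand.
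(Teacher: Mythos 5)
Your proof is correct and follows essentially the same route as the paper, whose entire proof of this lemma is the single line ``Combining Lemmas \ref{approx_int} and \ref{haha1}, we readily obtain.'' You have simply unpacked that combination: the identity preceding Theorem \ref{approx_int} splits off one $\omega^2$-correction of order $-1$, Lemma \ref{quick3} splits $\Lambda_{k_0}=\Lambda_0+\omega^2\Lambda_{k_0,-1}$ to produce another, and Lemma \ref{haha1} identifies the surviving electrostatic piece $(\lambda I-\mathcal{K}^*_{\partial D})^{-1}\circ\Lambda_0$ with $P_{D,1}+P_{D,0}+P_{D,-1}$, after which the two $\omega^2$-remainders are collected into the single $R_{\mu_0,\mu_1,\eps_0,\eps_1,\omega,\partial D,-1}$.
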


Following the proof of Theorem \ref{lemmaDM} and utilizing Lemmas \ref{quick1}-\ref{quick3} and \ref{approx_int}, as well as tracing back regularity \eqref{regularity}, we can also obtain that

\begin{Theorem} \label{lemmaDM11}
{\color{black} For $d >2 $} and $N \in \mathbb{N}$, $\partial D \in \mathcal{C}^{k,\alpha}$ and $h \in \mathcal{C}^{k-1,\alpha}$ when $k \geq 4$, there exists a constant $C$ depending only on $N$, $L\in I_{k}, M \in I_n$, $||\mathbb{X}||_{\mathcal{C}^2}$ and $||h||_{\mathcal{C}^1}$ such that 
\beqn
\begin{split}
&\left |  \mathcal{W}_{L, M} ( \mu_0,\mu_1,\epsilon_0,\epsilon_1 , D_{\varepsilon})   - \mathcal{W}_{L, M} ( \mu_0,\mu_1,\epsilon_0,\epsilon_1 , D) 
- \sum_{n=1}^N \varepsilon^n  
\mathcal{W}_{L, M}^{(n)} ( \mu_0,\mu_1,\epsilon_0,\epsilon_1 , D,h) \right|\\
&   \leq C \varepsilon^{N+1},
   \end{split}
   \label{seriesvariation21}
\eqn
for some $\mathcal{W}_{L, M}^{(n)} ( \mu_0,\mu_1,\epsilon_0,\epsilon_1 , D,h) $ with $\mathcal{W}_{L, M}^{(1)} ( \mu_0,\mu_1,\epsilon_0,\epsilon_1 , D,h)  $ given by
\beqnx
&  &\mathcal{W}_{L, M}^{(1)} ( \mu_0,\mu_1,\epsilon_0,\epsilon_1 , D,h) \\
&=&  \left \langle  ( k_0 r )^{- \frac{d-2}{2} + k } J_{\frac{d-2}{2} + k }(k_0 r)  Y_{L} (\omega) \,,\,
 \widetilde{ Q_{\mu_0,\mu_1,\epsilon_0,\epsilon_1,\varpi, D,h} } \left(   ( k_0 r )^{- \frac{d-2}{2} + n } J_{\frac{d-2}{2} + n }(k_0 r)  Y_{M} (\omega) \right)  \right\rangle_{L^2 (\partial D, d \sigma )},
\eqnx
where
\beqnx
 \widetilde{ Q_{\mu_0,\mu_1,\epsilon_0,\epsilon_1,\varpi, D,h} }  = Q_{D,h,1,I} + Q_{D,h,1,II} + Q_{D,h,0} + \varpi^2 R_{\mu_0,\mu_1,\epsilon_0,\epsilon_1,\varpi,\partial D, 0}
\eqnx
with $Q_{D,h,1,I}, Q_{D,h,1,II}$, $Q_{D,h,0}$ being the same as those in Theorem \ref{lemmaDM}, and $R_{\mu_0,\mu_1,\epsilon_0,\epsilon_1,\varpi,\partial D, 0}$ a pseudo-differential operator of order $0$ in the sense of $ \mathcal{C}^{k-3,\alpha} \Phi \text{PO}^{0}$ and uniformly bounded with respect to $\varpi$.
\end{Theorem}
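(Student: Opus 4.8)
The plan is to mirror, almost verbatim, the proof of Theorem~\ref{lemmaDM}, replacing the electrostatic operator $(\lambda I-\mathcal{K}^*_{\partial D})^{-1}\circ\Lambda_0$ by the $\omega$-dependent operator appearing in the representation \eqref{eq:rep1}, and then carrying the low-frequency corrections along at each step. First I would invoke Theorem~\ref{approx_int} and Lemma~\ref{haha11}, which already express $\mathcal{W}_{L,M}(\mu_0,\mu_1,\eps_0,\eps_1,D)$ as a boundary pairing of the operator $P_{D,1}+P_{D,0}+P_{D,-1}+\omega^2\,R_{\mu_0,\mu_1,\eps_0,\eps_1,\omega,\partial D,-1}$, i.e.\ of $(\lambda I-\mathcal{K}^*_{\partial D})^{-1}\circ\Lambda_{k_0}$ modulo $\omega^2$ times an order $-1$ operator that is uniformly bounded in $\omega$. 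Replacing $D$ by the perturbed domain $D_\varepsilon$ and pulling the whole pairing back to $\partial D$ via the diffeomorphism $\Psi_\varepsilon$ of \eqref{variationvariation}, one obtains $\mathcal{W}_{L,M}(\cdots,D_\varepsilon)$ as a pairing on $L^2(\partial D,d\sigma)$ of a composition of the pulled-back operators $\mathcal{K}^*_{\partial D_\varepsilon}$ (inside the resolvent), $\mathcal{S}^{k_0}_{\partial D_\varepsilon}$, $\mathcal{S}^{k_1}_{\partial D_\varepsilon}$, and the pulled-back Dirichlet-to-Neumann map $\Lambda_{k_0,\partial D_\varepsilon}$, together with the pulled-back $\omega^2$ remainder.

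Next I would expand each of these pulled-back factors in powers of $\varepsilon$: Theorem~\ref{theoremK} supplies the expansion of $\mathcal{K}^*_{\partial D_\varepsilon}$, hence --- by the resolvent identity and the Neumann series, exactly as in the derivation leading to the four-term formula in the proof of Theorem~\ref{lemmaDM} --- the expansion of $(\lambda I-\mathcal{K}^*_{\partial D_\varepsilon})^{-1}$; Corollary~\ref{quick2} supplies the $\varepsilon$-expansions of $\mathcal{S}^{k_0}_{\partial D_\varepsilon}$ and of $(\mathcal{S}^{k_1}_{\partial D_\varepsilon})^{-1}\mathcal{S}^{k_0}_{\partial D_\varepsilon}$ together with their $\omega^2$-splittings; and the corollary following Lemma~\ref{quick3} supplies the expansion of $\Lambda_{k_0,\partial D_\varepsilon}$, whose first variation carries the order-$0$ correction $\Lambda^{(1)}_{k_0,D,h,0}$. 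Collecting the coefficient of $\varepsilon^1$ reproduces precisely the same four-operator commutator structure as in the electrostatic case (the $h\circ\Lambda\circ(\lambda I-\mathcal{K}^*)^{-1}\circ\Lambda$ term, the $\mathcal{K}^{(1)}_{D,h}$ sandwiched between two resolvents, the commutator $[hH,(\lambda I-\mathcal{K}^*)^{-1}]$, and the $(\lambda I-\mathcal{K}^*)^{-1}\circ h\circ\Delta_{\partial D}$ term), but now with every occurrence of $\Lambda_0$, $\mathcal{S}_{\partial D}$ and $\mathcal{K}^{(1)}_{D,h}$ replaced by its $k$-dependent counterpart, i.e.\ by itself plus an additive $\omega^2$-correction one order lower (Lemmas~\ref{quick1}--\ref{quick3}, Corollary~\ref{quick2}). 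Symbol calculus then identifies the $\omega$-independent part of the resulting first-order operator with $Q_{D,h,1,I}+Q_{D,h,1,II}+Q_{D,h,0}$ of Theorem~\ref{lemmaDM}, while all the $\omega^2$-corrections, each composed with the order-$1$ map $\Lambda_{k_0}$ and/or a resolvent, assemble into a single pseudo-differential operator $\omega^2\,R_{\mu_0,\mu_1,\eps_0,\eps_1,\omega,\partial D,0}$ of order $0$; the uniform-in-$\omega$ boundedness of $R_{\mu_0,\mu_1,\eps_0,\eps_1,\omega,\partial D,0}$ is inherited directly from the uniform bounds in Lemmas~\ref{quick1}--\ref{quick3} and Corollary~\ref{quick2}.

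Finally, the remainder estimate $O(\varepsilon^{N+1})$ follows exactly as in Theorem~\ref{theoremK} and Theorem~\ref{lemmaDM}: the kernels of all operators involved depend analytically on $\varepsilon$, with remainders after $N$ terms controlled by a constant depending only on $N$, $L,M$, $\|\mathbb{X}\|_{\mathcal{C}^2}$ and $\|h\|_{\mathcal{C}^2}$, and --- by Lemmas~\ref{quick1}--\ref{quick3} --- uniformly in $\omega$, so truncating each factor's expansion at order $N$ yields an error bounded by $C\varepsilon^{N+1}$. The step I expect to be the main obstacle is the order bookkeeping in the second paragraph: one must track, through the four-term commutator expansion and through every composition with a Dirichlet-to-Neumann map or resolvent, exactly how the various $\omega^2$-corrections change order, and check that they collect into an operator of order $0$ and no lower while staying uniformly bounded in $\omega$ --- this is the point at which the subtlety flagged after the corollary to Lemma~\ref{quick3}, namely that the first variation of $\Lambda_{k_0}$ contributes at order $0$ rather than $-1$, must be propagated consistently.
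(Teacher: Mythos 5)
Your proposal is correct and follows exactly the route the paper intends: the paper does not write out a proof of Theorem~\ref{lemmaDM11} at all, but simply states that it follows the proof of Theorem~\ref{lemmaDM} by invoking Lemmas~\ref{quick1}--\ref{quick3} and~\ref{approx_int}, which is precisely the scheme you spell out. You also correctly identify the one subtle bookkeeping point --- that the first variation of $\Lambda_{k_0}$ contributes an $\omega^2$-correction of order $0$ rather than $-1$, which is exactly what the paper remarks after the corollary to Lemma~\ref{quick3} and which forces the remainder $R_{\mu_0,\mu_1,\eps_0,\eps_1,\omega,\partial D,0}$ to be of order $0$ and not lower.
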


\subsection{Localization of sensitivity of scattering coefficients at points of high mean curvature}

Similar to Section \ref{sechahaha}, let us consider 
\beqnx
 \text{tr}_{\partial D} \text{Ker}( \Delta + k_0^2) :=  \{ u\mid_{\partial D} : ( \Delta + k_0^2)  u = 0 \text{ in } \mathbb{R}^d \} ,
\eqnx
where we notice that $ \overline{\text{tr}_{\partial D} \text{Ker}( \Delta +k_0 )^2 ) }^{H^{s}(\partial D, d \sigma) } = H^{s} (\partial D, d \sigma) $ is well-defined {\color{black} for all $s \in [-k,k]$ when $\partial D \in \mathcal{C}^{k,\alpha}$ \cite{regularity1,regularity2,regularity3,regularity4,regularity5}.} . Similar to the situation for the generalized polarization tensors, one has
\[
\begin{split}
& \bigg\{ \left \langle \psi  \,,\,
 \widetilde{ Q_{\mu_0,\mu_1,\epsilon_0,\epsilon_1,\varpi, D,h} }  \, \phi \right\rangle_{L^2 (\partial D, d \sigma )} \, : \, \psi \in  H^{s} (\partial D, d \sigma) , \phi \in  H^{t} (\partial D, d \sigma) ,  s , t \in [-k,k] , s + t - 1 = 0   \bigg\} \\
&=  \left\{ \lim_{n \rightarrow \infty}  \sum_{\tiny \begin{matrix} k \leq n , \\ m \leq n \end{matrix}}  \sum_{\tiny \begin{matrix} L\in I_k, \\ M \in I_m \end{matrix} } a_L b_M  \mathcal{W}_{L, M}^{(1)} ( \mu_0,\mu_1,\epsilon_0,\epsilon_1 , D,h)  \, : \, a_L, b_M  \in \mathbb{C} \text{ such that the limit exists.} \right\} \,.
\end{split}
\]
It is worth emphasizing in what follows that with suitable choices of $\psi \in  H^{s} (\partial D, d \sigma) , \phi \in  H^{t} (\partial D, d \sigma) $ such that $ s , t \in \mathbb{R}, s + t  -1 =0 $, one can obtain the principal symbol in the geodesic normal coordinate at each point $x$ as follows
\beqnx
 \lim_{t \rightarrow \infty} t^{-1} e^{- i t \varphi_{x,\xi} }   \widetilde{ Q_{\mu_0,\mu_1,\epsilon_0,\epsilon_1,\varpi, D,h} }   e^{i t \varphi_{x,\xi} } \chi_x =  p_{Q_{D,h,1,I}}(x,\xi)  + p_{Q_{D,h,1,II}}(x,\xi) \, ,
\eqnx 
where $\xi \in \mathbb{S}^{d-1}$ and $\varphi_{x,\xi} (\,\cdot\,) = \langle \xi, \log_x(\,\cdot\,) \rangle $ in half of the injectivity radius.
Let us consider the same complete orthornormal bases on $L^2(\partial D, d \sigma)$ as in Section \ref{sechahaha}, namely 
$ \{ \eta_{ k , \partial D} \}_{k \in \mathbb{N}} $, and $ \{  ( k_0 r )^{- \frac{d-2}{2} + n} J_{\frac{d-2}{2} + n }(k_0 r)  Y_{M} (\omega) \}_{M \in I_n, n \in \mathbb{N}} $ are also a complete frame by density of $ \text{tr}_{\partial D} \text{Ker}( \Delta + k_0^2)$.  For $r_0$ such that $J_{\frac{d-2}{2} + n }(k_0 r_0) \neq 0$ for all $n \in \mathbb{N}$ (otherwise some basis needs to be dropped), let us denote by $\left( \tilde{U}_{ L , k,  \partial D} \right)$ the map that changes the basis to the orthonormal one and $ \left( \tilde{U}^{-1}_{L , k , \partial D} \right) $ as its inverse.  Then we render the following:
\begin{Theorem}
{\color{black} For $d >2 $} and $r_0$ such that $J_{\frac{d-2}{2} + n }(k_0 r_0) \neq 0$ for all $n \in \mathbb{N}$, we have the following inversion formula for $\partial D \in \mathcal{C}^{{\color{black}4},\alpha}$ and $h \in \mathcal{C}^{ {\color{black} 3},\alpha}$,
\begin{equation}
\begin{split}
 { \color{black}  \lambda^{-1}  \omega_d   (2-d) }   h(x) H(x) =
  \int_{\mathbb{S}^{d-1}}   \lim_{t \rightarrow \infty} \widetilde{\mathcal{G}}(\xi, t, x)
 d \sigma (\xi) ,
\end{split}
\label{invinvinvhahaha}
\end{equation}
where
\[
\begin{split}
\widetilde{\mathcal{G}}(\xi, t, x):=\sum_{ \begin{matrix} L \in I_k, \ M \in I_n , \\   k, n, r, s \in \mathbb{N} \end{matrix} }
& |\xi|^{-1} t^{-1} e^{- i t \varphi_{x,\xi} }  \, \eta_{s , \partial D }  \, \tilde{U}_{  L , s,  \partial D}  \,  \mathcal{W}^{(1)}_{L,M} (\mu_0,\mu_1,\epsilon_0,\epsilon_1, D , h) \\
&\quad \times  \tilde{U}^{-1}_{M , r , \partial D}  \, \left\langle \eta_{r , \partial D } \,,\,  \chi_x\, e^{i t \varphi_{x,\xi} } \right\rangle_{L^2(\partial D, d \sigma)}.
\end{split}
\]
\end{Theorem}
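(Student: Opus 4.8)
The plan is to transplant the three-stage inversion $\mathrm{inv}_3\circ\mathrm{inv}_2\circ\mathrm{inv}_1$ of Theorem~\ref{all_GPT} to the low-frequency Helmholtz setting, replacing the GPT $\mathcal{M}^{(1)}_{L,M}$ by the scattering coefficient $\mathcal{W}^{(1)}_{L,M}$, the solid harmonics $r^n Y_M$ by the Bessel--harmonic traces $(k_0 r)^{-\frac{d-2}{2}+n}J_{\frac{d-2}{2}+n}(k_0 r)Y_M|_{\partial D}$, and the orthonormalising matrices $U$ by $\tilde U$. First I would invoke Theorem~\ref{lemmaDM11}: $\mathcal{W}^{(1)}_{L,M}$ is the $L^2(\partial D,d\sigma)$ pairing of the Bessel--harmonic trace of order $(k,L)$ against $\widetilde{Q}:=\widetilde{Q_{\mu_0,\mu_1,\varepsilon_0,\varepsilon_1,\omega,D,h}}$ applied to the Bessel--harmonic trace of order $(n,M)$, where $\widetilde{Q}=Q_{D,h,1,I}+Q_{D,h,1,II}+Q_{D,h,0}+\omega^2 R_{\ast,0}$ with $Q_{D,h,0}$ and $R_{\ast,0}$ pseudo-differential of order $0$ and uniformly bounded in $\omega$. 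The decisive observation is that these two order-$0$ pieces do not affect the degree-one homogeneous part of the symbol of $\widetilde{Q}$, which therefore equals $p_{Q_{D,h,1,I}}(x,\xi)+p_{Q_{D,h,1,II}}(x,\xi)$ exactly as in Theorem~\ref{lemmaDM}; this is what lets the low-frequency problem inherit the electrostatic inversion.

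Next I would establish completeness of the Bessel--harmonic frame. Since $u_0=(k_0 r)^{-\frac{d-2}{2}+n}J_{\frac{d-2}{2}+n}(k_0 r)Y_M(\omega)$ solves $(\Delta+k_0^2)u_0=0$ and $(\Delta+k_0^2)$ admits a Calder\'on-type factorisation into first-order pseudo-differential factors modulo smoothing (as in \cite{uhlmann}), the space $\mathrm{tr}_{\partial D}\mathrm{Ker}(\Delta+k_0^2)$ is dense in $H^s(\partial D,d\sigma)$ for every $s\in\mathbb{R}$; hence, under the hypothesis $J_{\frac{d-2}{2}+n}(k_0 r_0)\neq 0$ for all $n$ (which prevents any member of the family from degenerating on $r_0\mathbb{S}^{d-1}$), the traces $\{(k_0 r)^{-\frac{d-2}{2}+n}J_{\frac{d-2}{2}+n}(k_0 r)Y_M|_{\partial D}\}_{M\in I_n,\,n\in\mathbb{N}}$ form a complete frame in $L^2(\partial D,d\sigma)$. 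Conjugating by the change-of-basis matrices $(\tilde U_{s,L,\partial D})$ and their inverse $(\tilde U^{-1}_{L,r,\partial D})$ then turns the data $(\mathcal{W}^{(1)}_{L,M})_{L,M}$ into the operator $\widetilde{Q}$, expressed in the orthonormal eigenframe $\{\eta_{r,\partial D}\}$ of $-\Delta_{\partial D}$; this is $\mathrm{inv}_1$, and, exactly as in Section~\ref{sechahaha}, it recovers $\widetilde{Q}$ in the weak operator topology on $H^s\times H^{1-s}$.

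Then $\mathrm{inv}_2$ is the oscillatory localisation: conjugating by the WKB phase $e^{it\varphi_{x,\xi}}$, $\varphi_{x,\xi}(\cdot)=\langle\xi,\log_x(\cdot)\rangle$, together with the cutoff $\chi_x$, and rescaling by $t^{-1}$, annihilates all contributions of order $\le 0$ (in particular $Q_{D,h,0}$ and $\omega^2 R_{\ast,0}$) and leaves, as $t\to\infty$, precisely $p_{Q_{D,h,1,I}}(x,\xi)+p_{Q_{D,h,1,II}}(x,\xi)$ on $\xi\in\mathbb{S}^{d-1}$, which is then reconstructed in full by degree-one homogeneity. Finally $\mathrm{inv}_3$ is the integral identity already performed in Theorem~\ref{all_GPT}: integrating this degree-one symbol against $|\xi|^{-1}d\sigma(\xi)$ over $\mathbb{S}^{d-1}$, the $\partial_\xi h$ term vanishes by oddness and the curvature terms recombine to $[(1-d)\omega_d+1]h(x)H(x)$. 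Assembling $\mathrm{inv}_3\circ\mathrm{inv}_2\circ\mathrm{inv}_1$ and writing out all kernels produces the summand $\widetilde{\mathcal{G}}(\xi,t,x)$ and hence \eqref{invinvinvhahaha}.

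The main obstacle, precisely as in the electrostatic version, will be controlling the weak-operator-topology limits: one must verify convergence of the multiple series defining $\widetilde{\mathcal{G}}$, check that the non-Riesz frame change (the matrices $\tilde U,\tilde U^{-1}$ are unbounded on $\ell^2$) does not spoil the $t\to\infty$ limit, and confirm that the order-$0$ remainder $\omega^2 R_{\ast,0}$, although uniformly bounded in $\omega$, is genuinely killed by the $t^{-1}$ scaling uniformly enough to pass the limit inside the $\xi$-integral. The non-vanishing condition $J_{\frac{d-2}{2}+n}(k_0 r_0)\neq 0$ is exactly what keeps the frame complete and the matrices $\tilde U$ well-defined.
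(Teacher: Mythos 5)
Your proposal is correct and mirrors the paper's own (unwritten but implicit) derivation: in the paper, this theorem follows directly from the setup preceding it, which transplants the three-stage inversion $\mathrm{inv}_3\circ\mathrm{inv}_2\circ\mathrm{inv}_1$ of Theorem~\ref{all_GPT} via Theorem~\ref{lemmaDM11}, the density of $\mathrm{tr}_{\partial D}\mathrm{Ker}(\Delta+k_0^2)$, the non-degeneracy condition $J_{\frac{d-2}{2}+n}(k_0 r_0)\neq 0$, and the observation that the $\omega^2 R_{\ast,0}$ remainder is order $0$ and thus annihilated by the $t^{-1}$ scaling in the WKB limit. The convergence caveats you raise at the end are legitimate but are also not addressed in the paper, so they represent no gap relative to the paper's own level of rigor.
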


In the following, we define
\[
\text{\rm Proj}_{\tilde{W}_{s,\partial D} } : L^2(\partial D, d \sigma) \rightarrow \tilde{W}_{s,\partial D} := \text{Span}\{  
 ( k_0 r )^{- \frac{d-2}{2} + n } J_{\frac{d-2}{2} + n }(k_0 r)  Y_M(\omega)  |_{\partial D}  \}_{m_{d-1} \leq s},
 \] 
 for $ s \in \mathbb{N}$. Then, via a perturbation analysis as in Lemma \ref{perturb_GPT}, we have

\begin{Lemma} \label{perturb_SC}
{\color{black} For $d >2 $,} given a general $\partial D \in \mathcal{C}^{ {\color{black} 4 },\alpha}$, {\color{black} we suppose $\text{Ric} \geq - C_0 \, g  $ for some $C_0 > 0$.} 
Let $\partial D_{\varepsilon}$ be an $\varepsilon$-perturbation along $h \in \mathcal{C}^{{\color{black} 3 },\alpha}$ and let $S= | \{ T : t_{d-1} \leq s \} | $. Then for $\varepsilon\in\mathbb{R}_+$ sufficiently small, we have
\beqnx
& &\max \bigg\{  \left| \| \tilde{U}^{-1}_{L , p , \partial D_{\varepsilon} } |_{\mathcal{L} ( V_{s,\partial D_{\varepsilon} }, \tilde{W}_{s,\partial D_{\varepsilon} } ) }  \|_{l^2 \rightarrow l^2} -  \| \tilde{U}^{-1}_{L , p , \partial D } |_{\mathcal{L} ( V_{s,\partial D }, \tilde{W}_{s,\partial D } ) }  \|_{l^2 \rightarrow l^2}  \right|  \,, \,\\
& & \quad \quad \quad \left| \| \tilde{U}_{L , p , \partial D_{\varepsilon} } |_{\mathcal{L} ( V_{s,\partial D_{\varepsilon} }, W_{s,\partial D_{\varepsilon} } ) }  \|^{-1}_{l^2 \rightarrow l^2} -  \| \tilde{U}_{L , p , \partial D } |_{\mathcal{L} ( V_{s,\partial D }, W_{s,\partial D } ) }  \|^{-1}_{l^2 \rightarrow l^2}  \right| \bigg \} \\
&<& { \color{black} C_d } \varepsilon \, \max_{1\leq P \leq S} \bigg \{   \max \left \{ 1, \max_{z \neq \lambda_P, z \in \lambda(\Delta_{\partial D}) }{\frac{  { \color{black} \| g^{-1} \|_{C^0}^2 \max\{ 1,  \|g\|_{C^1}^3  \}   }  }{|z^2 - \lambda_P^2|}} \right \} { \color{black} \|  h \|_{C^1} }    \| A \|_{C^1}   (  \lambda_p^2 +  { \color{black} C_0 } )   \\
& & \quad  \times   \| ( k_0 r )^{- \frac{d-2}{2} + s }  J_{\frac{d-2}{2} + s }(k_0 r) |_{\partial D}\|_{L^2(\partial D,d \sigma)} \\
&  &  \quad +  \|  h \|_{C^1}   \| \partial_\nu (  ( k_0 r )^{- \frac{d-2}{2} + s } J_{\frac{d-2}{2} + s }(k_0 r)  Y_T(\omega) ) |_{\partial D} \|_{L^2(\partial D,d \sigma)}  \\
& & \quad +   (d-1) \|  h \|_{C^0}     \| H \|_{C^0} \|   ( k_0 r )^{- \frac{d-2}{2} + s } J_{\frac{d-2}{2} + s }(k_0 r)  Y_T(\omega)  |_{\partial D} \|_{L^2(\partial D,d \sigma)} \bigg\}. 
\eqnx
for some dimensionality constant $C_d$.
\end{Lemma}

Similar to the specific examples in Section 2, we can have the following results. \medskip

\noindent \textbf{Example II.1.} Let us consider $\partial D = R_0 \mathbb{S}^{d-1}$ with $R_0<1$. Using the previous notations, we have
\beqnx
\tilde{U}^{-1}_{L , M , R_0 \mathbb{S}^{d-1}} =  \omega_{d-1}^{-1} ( k_0 R_0 )^{k- \frac{d-2}{2} } [J_{\frac{d-2}{2} + k }(k_0 R_0)]^{-1}   \delta_{L M}  \,.
\eqnx
For $k_0 R_0 /2 \ll 1 $, we have
\beqnx
( k_0 R_0 )^{k- \frac{d-2}{2} } [J_{\frac{d-2}{2} + k }(k_0 R_0)]^{-1} \sim \Gamma\left(\frac{d-2}{2} + k\right) \left( \frac{k_0^2 R_0^2}{2} \right)^{\frac{d-2}{2}-k} \,.
\eqnx
Hence we have the following estimate of the condition number when $\partial D = R_0 \mathbb{S}^{d-1}$,
\beqnx
\kappa( \tilde{U}_{M , L , R_0 \mathbb{S}^{d-1}} |_{\mathcal{L} ( \tilde{W}_{s,R_0 \mathbb{S}^{d-1}} , V_{s,R_0 \mathbb{S}^{d-1}})}  )  \leq C \sqrt{ \frac{s + d-2}{2} } \left ( \frac{ k_0^2 R_0^2 \, (s+ d-2)}{2}  \right)^{\frac{d-2}{2} + s}  .
\eqnx

\medskip

\noindent \textbf{Example II.2.} Consider $\partial D_{\delta} $ as follows: $\partial D = R_0 \mathbb{S}^{d-1}$ {\color{black} with the perturbation $\tilde{h} \in \mathcal{C}^{ {\color{black} 3 },\alpha}$, $\| \tilde{h} \|_{C^1} <1$ and a small magnitude $\delta$.}
Now, applying Lemma \ref{perturb_SC}, together with $\lambda_{T,\mathbb{S}^d} = s(s + d-2)$ for $t_{d-1} = s$, we have for $\delta$ small enough and large enough $s$, 
\beqnx
& &\max \bigg\{  \left| \| \tilde{U}^{-1}_{L , p , \partial D_{\delta} } |_{\mathcal{L} ( V_{s,\partial D_{\delta} }, \tilde{W}_{s,\partial D_{\delta} } ) }  \|_{l^2 \rightarrow l^2} - \omega_{d-1}^{-1} \, R_0^{-\frac{d-1}{2}} \right|  \, , \, \\
&  & \quad \quad \quad \left| \| \tilde{U}_{L , p , \partial D_{\delta} } |_{\mathcal{L} ( V_{s,\partial D_{\delta} }, \tilde{W}_{s,\partial D_{\delta} } ) }  \|_{l^2 \rightarrow l^2}^{-1} -   \omega_{d-1}^{-1} \,  R_0^{- \frac{d-1}{2} } \Gamma\left(\frac{d-2}{2} + k\right) \left( \frac{k_0^2 R_0^2}{2} \right)^{\frac{d-2}{2}-s}  \right| \bigg\}  \\
&<&    C_d \delta (2 R_0 + 2s - d+2)  (2s - d+2) \left( \left(\frac{d-2}{2} + s\right) k_0^2 \right)^{\frac{d-2}{2}-s}  R_0^{-2s - 1 }  \,,
\eqnx
and therefore, for small enough $\delta$, we obtain
\beqnx
& & \kappa( \tilde{U}_{L , p , \partial D_{\delta} } |_{\mathcal{L} ( V_{s,\partial D_{\delta} }, W_{s,\partial D_{\delta} } ) }  ) \\
&\leq&
\frac{    k_0^{d-2-2s} R_0^{\frac{d-2}{2}-2s}   + \delta \, C_d  (2 R_0 + 2s - d+2)   k_0^{d-2-2s}   R_0^{-2s - 1 }  }{ (2s - d+2)^{- \frac{d-2}{2} - s -1}  \, R_0^{-\frac{d-1}{2}} -  \delta \, C_d (2 R_0 + 2s - d+2)   k_0^{d-2-2s}  R_0^{-2s - 1 }   } .
\eqnx

From the above example, we have similar results to those in Corollary~\ref{cor:1}.

{
\begin{Corollary}\label{cor:2}

{ \color{black} For $d > 2$,} let us consider $\partial D_{\delta} $ as a $\delta$-perturbation of $\partial D = R_0 \mathbb{S}^{d-1}$ along the direction $\tilde{h}  \in C^{ {\color{black} 4 },\alpha} (\partial D)$ with {\color{black}$\| \tilde{h}  \|_{\mathcal{C}^1} <1$ for sufficiently small $\delta\in\mathbb{R}_+$}. Then for $h \in \mathcal{C}^{ {\color{black} 3 },\alpha} (\partial D_{\delta})$ with $\| h \|_{\mathcal{C}^1} <1$, considering an $\varepsilon$-perturbation of $\partial D_{\delta} $ along the direction $h$, $( \partial D_{\delta} )_{\varepsilon} $, we have
\beqnx
&  & | [ \text{\rm Proj}_{V_{s,\partial D_{\delta}} } (h H) ] (x) | \notag \\
&\leq&
\lambda^{-1} C_d' 
\frac{    k_0^{d-2-2s} R_0^{\frac{d-2}{2}-2s}   + 2 \delta \, \omega_{d-1}  (2 R_0 + 2s - d+2)   k_0^{d-2-2s}   R_0^{-2s - 1 }  }{ (2s - d+2)^{- \frac{d-2}{2} - s -1}  \, R_0^{-\frac{d-1}{2}} -  2 \delta \, \omega_{d-1}  (2 R_0 + 2s - d+2)   k_0^{d-2-2s}  R_0^{-2s - 1 }   }  \\
& & \times \, \| \mathcal{W}_{L, M}^{(1)} ( \mu_0,\mu_1,\epsilon_0,\epsilon_1 , D_{\delta} ,h)  \|_{\mathcal{L} (V_{s, \mathbb{S}^{d-1} }, V_{s, \mathbb{S}^{d-1} })}  \,.
\eqnx
Similarly,
\beqnx
&  & | [ \text{\rm Proj}_{V_{s,\partial D_{\delta}} } (h H) ] (x) | \notag \\
&\leq&
\lambda^{-1} C_{k_0,d}'    (  k_0 |x|)^{ \frac{d-2}{2} - k } \frac{1}{| H^{(1)}_{\frac{d-2}{2} + k }(k_0 R) | } \\
 &\times&
\frac{    k_0^{d-2-2s} R_0^{\frac{d-2}{2}-2s}   +  \delta \, C_d  (2 R_0 + 2s - d+2)   k_0^{d-2-2s}   R_0^{-2s - 1 }  }{ (2s - d+2)^{- \frac{d-2}{2} - s -1}  \, R_0^{-\frac{d-1}{2}} -  \delta \,  C_d  (2 R_0 + 2s - d+2)   k_0^{d-2-2s}  R_0^{-2s - 1 }   }  \\
& \times & \, 
\left \|
\frac{\partial }{\partial \varepsilon} \left( \left \langle Y_L(\omega_x)  \, , \, \bigg(u_{ ( \partial D_{\delta} ) _\varepsilon} - ( k_0 r )^{- \frac{d-2}{2} + n} J_{\frac{d-2}{2} + n }(k_0 r)  Y_{M} (\omega)  \bigg) (R \, \omega_x ) \right \rangle_{ L^2( R \mathbb{S}^{d-1} , d \omega_x ) }   \right)
\right \|_{\mathcal{L} (V_{s, \mathbb{S}^{d-1} }, V_{s, \mathbb{S}^{d-1} })}  
\eqnx
for some dimensionality constants $C_d$  and $ C_{k_0,d}'$ which depends also on $k_0$.
\end{Corollary}
}

The proof of Corollary \ref{cor:2} is similar to that of Corollary~\ref{cor:1}, and therefore we skip it. By Corollary~\ref{cor:2}, we readily see that the reconstruction of $h(x)$ from $ \mathcal{W}_{L, M}^{(1)} ( \mu_0,\mu_1,\epsilon_0,\epsilon_1 , D,h)  $ is more stable at points with high mean curvature $|H(x)|^2$ when $D$ is not too far from $R_0 \mathbb{S}^{d-1}$.

\section*{Acknowledgment}
The work of H. Ammari was supported by the SNF grant 200021--172483. The work of H. Liu was supported by the Hong Kong UGC General Research Funds, 12301218, 12302919 and 12301420. The authors would like to thank the anonymous referees for many constructive comments and suggestions, which have led to significant improvements on the results as well as the presentation of the paper.

{\color{black}

\section*{Appendix}
In this appendix, we show the descent nature of the proposed preconditioned Gauss-Newton method \eqref{iteration} under a particular choice of $\alpha_n$ in the functional \eqref{optimization}. It is mainly for an illustrative purpose and hence we shall make some simple assumptions in order to simplify the argument.   {\color{black} For simplicity, let us assume $\partial D_{\text{exact}}$ is in $\mathcal{C}^{\infty}$.}

First, it is easy to see from the definition in \eqref{iteration} that
\begin{equation}\tag{A.1}
v^n(x) =  - [A_{n,k} ]^{-1} \text{Proj}|_{A_{n,k}   [ V_{k, \partial D^n } ] } \left( \mathcal{M} (\lambda, D^n )  -  \mathcal{M} ^{\text{meas}} (\lambda, D_{\text{exact}} )   \right)  \in V_{k, \partial  D^n } ,
\label{temp1}
\end{equation}
{\color{black} as the least-squared solution in \eqref{iteration}, 
where we recall $A_{n,k} :=  A_n |_{   \mathcal{L} ( V_{k, \partial D^n } , \mathbb{C}^{d_k} )   } $
}and
\begin{equation}\tag{A.2}
 \mathcal{M}^{(i)} \left(\lambda, D^n , \frac{v^n(x)}{H^n(x)}  \right) = - \text{Proj}|_{A_{n,k}   [ V_{k, \partial D^n } ] } \left( \mathcal{M} (\lambda, D^n )  -  \mathcal{M} ^{\text{meas}} (\lambda, D_{\text{exact}} )   \right) . 
\label{temp2}
\end{equation}
For notational sake, let us denote
\beqnx
( \Upsilon^n )^2  :=  \| \text{Proj}|_{A_{n,k}   [ V_{k, \partial \partial  D^n } ] } \left( \mathcal{M} (\lambda, D^n )  - \mathcal{M} ^{\text{meas}} (\lambda, D_{\text{exact}} )   \right) \|^2_{\mathbb{C}^{d_k}}  \,.
\eqnx

Next, for a given $n \geq 0$, we suppose that $D^n$ has a $C^{\infty}$ boundary (for the sake of simplicity and illustrative purpose) and $D_{\text{exact}}$ has a $\mathcal{C}^{ {\color{black} 4 }, \alpha }$ boundary.  Let us also assume that $D_{\text{exact}}$ and $D^n$ are both $\mathcal{C}^{{\color{black} 3 },\alpha}$ $\varepsilon$-perturbations of $R_0 \mathbb{S}^{d-1}$ with $\varepsilon < \varepsilon_0$ such that $ ( 1 - \varepsilon_0 \left \|   H^{-1} \right \|^2_{C^{{\color{black} 3 },\alpha}}  \left \|   A \right \|^2_{C^{{\color{black} 3 },\alpha}}  ) \left \|   H^{-1}  \right \|^2_{C^{{\color{black} 3 },\alpha}} $ is bounded below, where $H^{-1}$ is the inverse of the mean curvature (again for the sake of simplicity).  Then via the bound \eqref{bound_K_n} in Theorem \ref{theoremK}, a detailed expansion of the operators in $\mathcal{M}^{(i)}(\lambda, D^n , \cdot)$ in \eqref{seriesvariation2} as in \eqref{variationvariation} (c.f. \cite{resol1,heteroscattering,yu,gpt,book}) as well as the equivalence of norms in finite dimensions, one can show that the terms in Theorem \ref{lemmaDM} satisfy
$$ \| \mathcal{M}^{(i)}(\lambda, D^n , h)  \|_{\mathbb{C}^{d_k}}^2 \leq C_k \| \mathcal{M}^{(i)}(\lambda, D^n , h)  \|^2_{\mathcal{L} (V_{k, \mathbb{S}^{d-1} }, V_{k, \mathbb{S}^{d-1} })}  \leq  L_{\varepsilon_0,i,k}  \| h\|_{C^1}^i $$
with $L_{\varepsilon_0,i,k} $ being independent of $D^n$.
Write for $0 \leq t \leq 1$
\[
D^{n}(t) := \left\{ x + \alpha_n t \frac{v^n(x)}{H^n(x)}  \nu^n(x) \, : \, x \in D^n  \right\} \, . 
\]
{\color{black} Recalling the definition of $J(D)$ in \eqref{optimization},} we have
\beqnx
 J(D^{n+1}) 
= J(D^{n}) + J^{(1)}\left(D^{n} ,  \frac{v^n(x)}{H^n(x)}  \right)   + \int_0^1 J^{(2)}\left(D^{n}(t) ,  \frac{v^n(x)}{H^n(x)}  \right)  (1-t) dt,
\eqnx
where one can directly verify from \eqref{temp2} that
\beqnx
 J^{(1)}\left(D^{n} ,  \frac{v^n(x)}{H^n(x)}  \right)  := \left \langle \mathcal{M}^{(i)} \left(\lambda, D^n , \frac{v^n(x)}{H^n(x)}  \right) ,  \mathcal{M} (\lambda, D^n )  - ( \mathcal{M})^{\text{meas}} (\lambda, D_{\text{exact}} ) \right \rangle_{\mathbb{C}^{d_k}}  =  - ( \Upsilon^n )^2,
\eqnx
which shows that our choice of $ \frac{v^n}{H^n} $ is always a descent direction, and
\beqnx
J^{(2)}\left(D^{n}(t) ,  \frac{v^n(x)}{H^n(x)}  \right) &:=&  \left \langle \mathcal{M} \left (\lambda, D^n(t) \right) -   \mathcal{M}^{\text{meas}} (\lambda, D_{\text{exact}} )  , \mathcal{M}^{(2)} \left (\lambda, D^n(t) , \frac{{\color{black} v^n}(x)}{H^n(x)}  \right )  \right \rangle_{\mathbb{C}^{d_k}} \\
& &+ \left \| \mathcal{M}^{(1)} \left(\lambda, D^n(t) ,  \frac{v^n(x)}{H^n(x)}  \right)  \right \|^2_{\mathbb{C}^{d_k}} \,.
\eqnx

Suppose $\overline{\alpha_n} > 0$ is sufficiently small such that for all chosen $\alpha_n < \overline{\alpha_n}$, $D^n(t)$'s still have $\mathcal{C}^{\infty}$ boundaries (again for the sake of simplicity and illustrative purpose). The same assumption is made for all $\varepsilon(t)$-perturbations of $D^{\text{exact}}$ for all $t \in [0,1]$ and $\varepsilon(t) < \varepsilon_0$. Then for all $\alpha_n < \overline{\alpha_n}$, via \eqref{error} and \eqref{temp1}, we have
\beqnx
\left \| \mathcal{M} \left (\lambda, D^n(t) \right) -  \mathcal{M}^{\text{meas}} (\lambda, D_{\text{exact}} ) \right \|_{\mathbb{C}^{d_k}} & \leq &  L_{\varepsilon_0,1,k}  \left( \alpha_n t \left \|   \frac{v^n(x)}{H^n(x)}\right \|_{C^1} + \varepsilon_0 \right) + \text{err} \\
\left \| \mathcal{M}^{(2)} \left (\lambda, D^n(t) , \frac{{\color{black} v^n}(x)}{H^n(x)}  \right ) \right \|_{\mathbb{C}^{d_k}} & \leq&   L_{\varepsilon_0,2,k} \left \|   \frac{v^n(x)}{H^n(x)}  \right \|^2_{C^1}  \\
 \left \| \mathcal{M}^{(1)} \left (\lambda, D^n(t) , \frac{{\color{black} v^n}(x)}{H^n(x)}  \right )  - A_n \left [  v^n(x)  \right ]   \right\|_{\mathbb{C}^{d_k}} &\leq& L_{\varepsilon_0,2,k}  \alpha_n \, t \left \|   \frac{v^n(x)}{H^n(x)}  \right \|^2_{C^1} \\
 \left \| A_n \left [  v^n(x)  \right ]   \right\|^2_{\mathbb{C}^{d_k}}  &=& ( \Upsilon^n )^2 \,.
\eqnx
{\color{black} where $\text{err}$ is a choice of error threshold given as in \eqref{error}.}
Moreover, by Morrey's inequality, {\color{black} (which we recall as
\[
\| u \|_{\mathcal{C}^{s - [ \frac{d-1}{2}] -1, \gamma} (\partial D) } \leq C \|u \|_{H^{s}(\partial D)}
\]
for all $s \geq 1+ [ \frac{d-1}{2}] $, and $\gamma = 1+ [ \frac{d-1}{2}]  - \frac{d-1}{2}  $ if $ \frac{d-1}{2} \in \mathbb{N}$ and $\gamma \in (0,1)$ otherwise,)}
together with \eqref{temp1} and \eqref{eigenvalue}, and the fact that
\[
( 1 - \varepsilon_0 \left \|   H^{-1} \right \|^2_{C^{{\color{black} 3 },\alpha}}  \left \|   A \right \|^2_{C^{{\color{black} 3 },\alpha}}  ) \left \|   H^{-1}  \right \|^2_{C^{{\color{black} 3 },\alpha}}  \leq \left \|   (H^n)^{-1}  \right \|^2_{C^{{\color{black} 3 },\alpha}}  \leq \left \|   H^{-1}  \right \|^2_{C^{{\color{black} 3 },\alpha}} ( 1 + \varepsilon_0 \left \|   H^{-1} \right \|^2_{C^{{\color{black} 3 },\alpha}}  \left \|   A \right \|^2_{C^{{\color{black} 3 },\alpha}}  ), 
\]
one can readily show that $   (H^n)^{-1} $ is in $ \mathcal{C}^4$ and
\begin{equation}\tag{A.3}\label{A.3}
\begin{split}
  \left \|   \frac{v^n}{H^n}  \right \|^2_{C^1} 
\leq & C_{d} \left \|   (H^n)^{-1}  \right \|^2_{C^1} \left  \| v^n \right \|^2_{H^{2 + [ (d - 1)/2 ]}}  \\
 \leq & C_{d} \left \|   (H^n)^{-1}  \right \|^2_{C^1} \lambda_k^{8 + 2(d - 1)} \left  \| v^n \right \|^2_{V_{k, \partial \mathbb{S}^{d-1} }}  \\
  \leq & \frac{ C_{\varepsilon_0,d} }{ d_k^2}  \left \|   H^{-1}  \right \|^2_{C^1} ( 1 + \varepsilon_0 \left \|   H^{-1} \right \|^2_{C^1}  \left \|   A \right \|^2_{C^1}  )
\\
  \times \bigg(  1+ & \varepsilon_0 \, C_d   \| A_{\partial D_{\text{exact}}} \|_{C^0}   \| g^{-1} _{\partial D_{\text{exact}}}\|_{C^0}   \bigg)^{4 + (d - 1)}   \lambda_{k,\partial D_\text{exact}}^{8 + 2(d - 1)} 
 \left( R_0 + \frac{1}{R_0} \right)^{2k} ( \Upsilon^n )^2. 
\end{split}
\end{equation}
where the first inequality utilized Morrey's inequality with $s = 2+ [ \frac{d-1}{2}] $.
Here, the inequality in the last term in \eqref{A.3} is independent of the choice of $n$, {\color{black} which shows that $ \frac{v^n}{H^n} $ is in $\mathcal{C}^1$ for all $n$, and that $D^{n+1}$ as defined in \eqref{iteration} is indeed a $\mathcal{C}^1$ perturbation of $D^n$.}  Similarly, one can furthermore verify that $\left \|   \frac{v^n}{H^n}  \right \|^2_{C^{3,\alpha}} $ is bounded.
For notational sake, let us write $s_n$ be such that
\[
\alpha_n := s_n \min\{ 1 ,       ( \Upsilon^n )^{-1} \}  \leq \overline{\alpha_n}\,.
\]
The above set of inequalities now combine to give
\beqnx
J(D^{n+1})
& \leq & J(D^{n}) - \alpha_n   \, ( \Upsilon^n )^2 + \frac{1}{2} \alpha_n^2 \,
 ( \Upsilon^n )^2  \widetilde{L_{k,d,\varepsilon_0,2}}   + \alpha_n^3  ( \Upsilon^n )^3 \,
 \widetilde{L_{k,d,\varepsilon_0,3}}  +  \alpha_n^4 ( \Upsilon^n )^4  \widetilde{L_{k,d,\varepsilon_0,4}} \\
& \leq & J(D^{n}) - s_n  \left( 1  -  s_n \,
  \widetilde{L_{k,d,\varepsilon_0,2}}   - s_n^2   \,
 \widetilde{L_{k,d,\varepsilon_0,3}}  -  s_n^3  \widetilde{L_{k,d,\varepsilon_0,4}} \right) \, \Upsilon^n  \min\{1 , \Upsilon^n  \} \,,
\eqnx
for some $ 0 < \widetilde{L_{k,d,\varepsilon_0,i}} , i =2,3,4$ independent of $n$.  Notice that there exists a constant $\overline{s_{k,d,\varepsilon_0}}$ independent of $n$ such that for all $0 < s \leq \overline{s_{k,d,\varepsilon_0}}$
\[
   s \left ( \,
  \widetilde{L_{k,d,\varepsilon_0,2}}   + s   \,
 \widetilde{L_{k,d,\varepsilon_0,3}}  +  s ^2 \widetilde{L_{k,d,\varepsilon_0,4}}  \right ) \leq  \frac{1}{2}. 
\]
Hence for all choices of $ s_n  \leq \min \{ \overline{\alpha_n}  \max\{ 1 ,   \Upsilon^n \} \, , \, \overline{s_{k,d,\varepsilon_0}} \} $, we have
\begin{equation}\tag{A.4}
J(D^{n+1}) \leq J(D^{n}) - \frac{s_n}{2} \, \Upsilon^n  \min\{1 , \Upsilon^n  \} \,. \label{descent_step}
\end{equation}

Thus, by supposing $D^0$ has a $C^{\infty}$ boundary and that 
$D_{\text{exact}}$ and $D^n$ are both $\varepsilon$-perturbations of $R_0 \mathbb{S}^{d-1}$ with $\varepsilon < \varepsilon_0$ and the perturbation direction small in $\mathcal{C}^1$ and finite in $\mathcal{C}^{{\color{black} 3 },\alpha}$, we can inductively show that for all $n > 0$ and all small choices of $ s_n  \leq \min \left\{ \overline{\alpha_n}  \max\{ 1 ,   \Upsilon^n \} \, , \, \overline{s_{k,d,\varepsilon_0}} \right \} $, \eqref{descent_step} always holds, namely $ 0 \leq J(D^{n+1}) < J(D^{n}) $.
This verifies the descent nature of our Gauss-Newton method with appropriate step-sizes. Moreover, from this, we obtain $J(D^{n}) \rightarrow J^*$ for some $J^*$ and 
\[
\sum_{n=0}^\infty s_n  \, \Upsilon^n  \min\{1 , \Upsilon^n  \} \leq 2 ( J(D^{0}) - J^* ) < \infty ,
\]
which guarantees that
\[
s_n  \, \Upsilon^n  \min\{1 , \Upsilon^n  \}  \rightarrow 0 \text{ as } n \rightarrow \infty\,.
\]

{\color{black}
More observations may follow: (i) with this choice of $s_n$ such that all $D^n$'s are $C^{{\color{black} 3 },\alpha}$ $\varepsilon^n$-perturbation of $D_{\text{exact}}$ ($\varepsilon^n < \varepsilon_0$), the convergence of a subsequence of $D^n$ to $D^*$ in $C^{{\color{black} 3 },\beta}$ for $\beta < \alpha$ is evident via the Ascoli-Arzela theorem;  (ii) in case that $\{D^n\}$ never comes close to the boundary of the $\varepsilon_0$-neighborhood of $C^{{\color{black} 3 },\alpha}$-perturbation of $D_{\text{exact}}$, then $\overline{\alpha_n}$ can be chosen to have a uniform lower bound and thus $\Upsilon^n$ converges to $0$ itself. However, we shall not go further on the convergence of this numerical scheme as it is not the focus of our work. We only remark that, in practice, it is usually preferred to perform a line search to obtain $\alpha_n$ for a greater descent magnitude. 
}

}

\end{document}